\definecolor{shadecolor}{rgb}{0.95,0.95,0.95}
\numberwithin{equation}{section}
\newtheorem{theorem}{Theorem}[section]
\newtheorem{proposition}[theorem]{Proposition}
\newtheorem{corollary}[theorem]{Corollary}
\newtheorem{lemma}[theorem]{Lemma}
\theoremstyle{definition}
\newtheorem{remark}[theorem]{Remark}
\newtheorem{example}[theorem]{Example}
\newtheorem{definition}[theorem]{Definition}
\DeclareMathOperator{\End}{End}
\DeclareMathOperator{\Hom}{Hom}
\DeclareMathOperator{\Res}{Res}
\DeclareMathOperator{\Sl}{\mathfrak{sl}}
\newcommand{\BF}{\mathbb{F}}
\newcommand{\BN}{\mathbb{N}}
\newcommand{\BZ}{\mathbb{Z}}
\newcommand{\BC}{\mathbb{C}}
\newcommand{\spanf}{\mathrm{span}_\BF}
\newcommand{\Vir}{\mathrm{Vir}}
\newcommand{\fg}{\mathfrak{g}}
\newcommand{\D}{\mathcal{D}}
\newcommand{\B}{\mathcal{B}}
\newcommand{\bc}{\mathbf{c}}
\newcommand{\bk}{\mathbf{k}}
\newcommand{\1}{\mathbf{1}}
\newcommand{\CH}{\mathcal{H}}
\newtoks\@enLab  
\def\@enQmark{?}
\def\@enLabel#1#2{%
  \edef\@enThe{\noexpand#1{\@enumctr}}%
  \@enLab\expandafter{\the\@enLab\csname the\@enumctr\endcsname}%
  \@enloop}
\def\@enSpace{\afterassignment\@enSp@ce\let\@tempa= }
\def\@enSp@ce{\@enLab\expandafter{\the\@enLab\space}\@enloop}
\def\@enGroup#1{\@enLab\expandafter{\the\@enLab{#1}}\@enloop}
\def\@enOther#1{\@enLab\expandafter{\the\@enLab#1}\@enloop}
\def\@enloop{\futurelet\@entemp\@enloop@}
\def\@enloop@{%
  \ifx A\@entemp         \def\@tempa{\@enLabel\Alph  }\else
  \ifx a\@entemp         \def\@tempa{\@enLabel\alph  }\else
  \ifx i\@entemp         \def\@tempa{\@enLabel\roman }\else
  \ifx I\@entemp         \def\@tempa{\@enLabel\Roman }\else
  \ifx 1\@entemp         \def\@tempa{\@enLabel\arabic}\else
  \ifx \@sptoken\@entemp \let\@tempa\@enSpace         \else
  \ifx \bgroup\@entemp   \let\@tempa\@enGroup         \else
  \ifx \@enum@\@entemp   \let\@tempa\@gobble          \else
                         \let\@tempa\@enOther
             \fi\fi\fi\fi\fi\fi\fi\fi
  \@tempa}
\newlength{\@sep} \newlength{\@@sep}
\providecommand{\sfbc}{\rmfamily\upshape}
\providecommand{\sfn}{\rmfamily\upshape}
\def\@enfont{\ifnum \@enumdepth >1\let\@nxt\sfn \else\let\@nxt\sfbc \fi\@nxt}
\def\enumerate{%
   \ifnum \@enumdepth >3 \@toodeep\else
      \advance\@enumdepth \@ne
      \edef\@enumctr{enum\romannumeral\the\@enumdepth}\fi
   \@ifnextchar[{\@@enum@}{\@enum@}}
\def\@@enum@[#1]{%
  \@enLab{}\let\@enThe\@enQmark
  \@enloop#1\@enum@
  \ifx\@enThe\@enQmark\@warning{The counter will not be printed.%
   ^^J\space\@spaces\@spaces\@spaces The label is: \the\@enLab}\fi
  \expandafter\edef\csname label\@enumctr\endcsname{\the\@enLab}%
  \expandafter\let\csname the\@enumctr\endcsname\@enThe
  \csname c@\@enumctr\endcsname7
  \expandafter\settowidth
            \csname leftmargin\romannumeral\@enumdepth\endcsname
            {\the\@enLab\hskip\labelsep}%
  \@enum@}
\def\@enum@{\list{{\@enfont\csname label\@enumctr\endcsname}}%
           {\usecounter{\@enumctr}\def\makelabel##1{\hss\llap{##1}}%
     \ifnum \@enumdepth>1\setlength{\topsep}{\@@sep}\else
           \setlength{\topsep}{\@sep}\fi
     \ifnum \@enumdepth>1\setlength{\itemsep}{0pt plus1pt minus1pt}%
      \else \setlength{\itemsep}{\@@sep}\fi
     \setlength{\parsep}{0pt plus1pt minus1pt}%
     \setlength{\parskip}{0pt plus1pt minus1pt}
                   }}
\def\endenumerate{\par\ifnum \@enumdepth >1\addvspace{\@@sep}\else
           \addvspace{\@sep}\fi \endlist}
\begin{document}
\title{Symmetric Invariant Bilinear Forms on Modular Vertex Algebras}
\author{Haisheng Li\\
Department of Mathematical Sciences, Rutgers University,\\
Camden, NJ 08102, USA, and\\
School of  Mathematical Sciences, Xiamen University, Xiamen 361005, China\\
\textit{E-mail address:} \texttt{hli@camden.rutgers.edu}\and
Qiang Mu\\
School of Mathematical Sciences, Harbin Normal University,\\
Harbin, Heilongjiang 150025, China\\
\textit{E-mail address:} \texttt{qmu520@gmail.com}}
\maketitle

\begin{abstract}
In this paper, we study contragredient duals and invariant bilinear forms for modular vertex algebras (in characteristic $p$).
We first introduce a bialgebra $\CH$ and we then
introduce a notion of $\CH$-module vertex algebra and a notion of $(V,\CH)$-module
for an $\CH$-module vertex algebra $V$. Then we give a modular version of Frenkel-Huang-Lepowsky's theory and
study invariant bilinear forms on an $\CH$-module vertex algebra.
As the main results, we obtain an explicit description of the space of invariant bilinear forms on a general $\CH$-module
vertex algebra, and we apply our results to affine vertex algebras and Virasoro vertex algebras.
\end{abstract}

\section{Introduction }
In the theory of vertex operator algebras in characteristic zero, an important notion
is that of contragredient dual (module), which was due to Frenkel, Huang, and Lepowsky (see \cite{FHL}).
Closely related to contragredient dual is the notion of invariant bilinear form on a vertex operator algebra
and it was proved therein that every invariant bilinear form on a vertex operator algebra is automatically symmetric.
Contragredient dual and symmetric invariant bilinear forms have played important roles in various studies.
Note that as part of its structure, a vertex operator algebra $V$ is a natural module
 for the Virasoro algebra. In \cite{FHL},  a notion of quasi vertex operator algebra was introduced, which generalizes that of
vertex operator algebra with an internal Virasoro algebra action replaced by an external $\Sl_{2}$-action, and  the corresponding results were extended for more general quasi vertex operator algebras.
Note that a general vertex algebra $V$ (without conform vector) is naturally a
module for the (Hopf) algebra $\BC[\D]$ where $\D$ is the canonical derivation defined by $\D(v)=v_{-2}{\bf 1}$ for $v\in V$, whereas
we have $L(-1)=\D$ for a quasi vertex operator algebra $V$.

Invariant bilinear forms on vertex operator algebras were furthermore studied in \cite{Li94}
and an explicit description of the space of invariant bilinear forms was obtained.
More specifically,  it was proved that the space of invariant bilinear forms on a vertex operator algebra $V$
 is canonically isomorphic to the space
$\text{Hom}_{V}(V,V')$, where $V'$ is the contragredient dual of the adjoint module $V$.
Furthermore, it was proved that $\text{Hom}_{V}(V,V')$ is isomorphic to $(V_{(0)}/L(1)V_{(1)})^{*}$,
where $V=\bigoplus_{n\in \BZ}V_{(n)}$ (graded by the conformal weight). Consequently, this gives rise to
 a necessary and sufficient condition for the existence of a non-degenerate symmetric invariant bilinear form on a simple vertex operator algebra.
It is conceivable that contragredient duals and bilinear forms will be practically important in the theory of modular vertex algebras also.

In this paper, we study contragredient duals and invariant bilinear forms for modular vertex algebras.
We first introduce a bialgebra $\CH$ by using the universal enveloping algebra of the complex Lie algebra $\Sl_2$ and
introduce a notion of $\CH$-module vertex algebra and a notion of $(V,\CH)$-module
for an $\CH$-module vertex algebra $V$, and we then
formulate FHL's theory in characteristic $p$ and define invariant bilinear form on an $\CH$-module vertex algebra.
As the main results, we obtain an explicit description of the space of invariant bilinear forms on a general $\CH$-module
vertex algebra, and we apply our results to affine vertex algebras and Virasoro vertex algebras.

We now give a more detailed description of the content of this paper.
Let $\BF$ be an algebraically closed field of a prime characteristic $p$.  We assume $p\ne 2$ throughout this paper.
The notion of vertex algebra over $\BF$ was defined  by Borcherds (see \cite{B86}). On the other hand,
it can also be defined equivalently by using
what was called (Cauchy-)Jacobi identity in \cite{FLM} and \cite{FHL}, as it is done in this paper.
For vertex algebras over $\BF$, the bialgebra $\BC[\D]$ in the case of characteristic zero is replaced with a bialgebra $\B$
over $\BF$ (see \cite{B86}), where $\B$ has  a basis $\D^{(n)}$ for $n\in \BN$  with  $\D^{(0)}=1$ and
\begin{equation*}
\D^{(m)}\cdot \D^{(n)}=\binom{m+n}{n}\D^{(m+n)},\   \  \
\Delta(\D^{(n)})=\sum_{j=0}^{n}\D^{(n-j)}\otimes \D^{(j)},\    \    \   \
\varepsilon(\D^{(n)})=\delta_{n,0}
\end{equation*}
for $m,n\in \BN$.  The fact is that every vertex algebra $V$ over $\BF$ is naturally a $\B$-module with
$\D^{(n)}v=v_{-n-1}{\bf 1}$ for $n\in \BN,\ v\in V$.

To study contragredient dual, we introduce a modular counterpart of the notion of quasi vertex operator algebra.
Note that in the case of characteristic zero, (formal) exponentials $e^{zL(\pm 1)}$ and $z^{L(0)}$ play a crucial role
(see \cite{FHL}). For the modular case, we shall need a bialgebra over $\BF$.
Consider the complex Lie algebra $\Sl_{2}$ with a basis
$\{ L_{1},L_0,L_{-1}\}$ such that
\begin{equation*}
[L_{0},L_{\pm 1}]=\mp L_{\pm 1}, \  \   \  \  [L_{1},L_{-1}]=2L_{0}.
\end{equation*}
For any nonnegative integer $n$, set
\begin{equation*}
L_{\pm 1}^{(n)}=\frac{1}{n!}(L_{\pm 1})^{n},\   \   \   \
L_{0}^{(n)}=\binom{-2L_{0}}{n}=\frac{1}{n!}(-2L_0)(-2L_0-1)\cdots (-2L_0-n+1),
\end{equation*}
which are elements of the universal enveloping algebra $U(\Sl_2)$.
Let $U(\Sl_2)_{\BZ}$ denote the subring of  $U(\Sl_2)$ generated by these elements.
From a result of Kostant,  $U(\Sl_2)_{\BZ}$ is an integral form of  $U(\Sl_2)$. Then
define $\CH=\BF\otimes_{\BZ}U(\Sl_2)_{\BZ}$, which is a Hopf algebra over $\BF$.

The modular analogue of a quasi vertex operator algebra is what we call an $\CH$-module vertex algebra.
By definition, an $\CH$-module vertex algebra is a vertex algebra $V$
which is also a lower truncated $\BZ$-graded $\CH$-module such that
$L_{-1}^{(n)}=\D^{(n)}$ on $V$  and
\begin{equation*}
L_{0}^{(n)}v=\binom{-2m}{n}v\   \   \   \mbox{ for }n\in \BN,\  v\in V_{m},\  m\in \BZ,
\end{equation*}
and such that a counterpart of the conjugation relation involving $L_{1}$ of \cite{FHL} holds.
We then define a notion of lower truncated $(V,\CH)$-module
for an $\CH$-module vertex algebra $V$ accordingly.
Under this setting, for any lower truncated $(V,\CH)$-module $W$,
we have the contragredient dual $W'$, which is also a lower truncated $(V,\CH)$-module, where
$W'=\bigoplus_{n\in \BZ}W_{n}^{*}$ as a vector space.
It follows from the same arguments in \cite{FHL} that every invariant bilinear form on an $\CH$-module vertex algebra
$V$ is automatically symmetric. It is proved here that the space of invariant bilinear forms on $V$
is canonically isomorphic to the space $\text{Hom}_{(V,\CH)}(V,V')$. Furthermore, we prove that
for any lower truncated $(V,\CH)$-module $W$,  $\text{Hom}_{(V,\CH)}(V,W)$
 is naturally isomorphic to the subspace $W^{\CH}=\{ w\in W\ |\  L_{\pm 1}^{(n)}w=0\   \   \mbox{ for }n\ge 1\}$.
 As the main result of this paper,
we prove that the space of invariant bilinear forms on $V$ is canonically isomorphic to
the space $(V_{0}/(L_{1}^{+}V)_{0})^{*}$,
where $L_{1}^{+}V=\sum_{n\ge 1}L_{1}^{(n)}V$ and $(L_{1}^{+}V)_{0}$ is the degree-zero subspace.
To achieve this, as a key step we prove that $(L_{-1}^{+}V)_{0}\subset (L_{1}^{+}V)_{0}$.

Note that there are essential differences between the modular case and the characteristic-zero case,
where proofs in modular case are often more complicated.
In the case of characteristic zero, a vertex operator algebra is automatically a quasi vertex operator algebra,
but a vertex operator algebra over $\BF$ (see \cite{DR1}) is not necessarily an $\CH$-module vertex algebra.
On the other hand, we show that affine vertex algebras and Virasoro vertex algebras are naturally $\CH$-module vertex algebras. We also prove that they satisfy the condition that $(L_{1}^{+}V)_{0}=0$.

We note that there have already been several studies in literature on modular vertex operator algebras.
For example, the representation theory, including $A(V)$-theory, was studied in \cite{DR1},
Virasoro vertex operator algebras were studied in \cite{DR2}, and
a modular $A_{n}(V)$ theory was studied in \cite{R}, while framed vertex operator algebras
were studied in \cite{DLR}.
On the other hand, integral forms of some vertex operator algebras were studied
in \cite{BR1,BR2}, \cite{DG}, \cite{GL}, \cite{Mc1,Mc2},
and related modular vertex algebras were used to study modular moonshine in \cite{BR1,BR2}, \cite{GL}.

This paper is organized as follows. Section 2 is preliminaries.
In Section 3, we first introduce the bialgebra $\CH$ and then define the notion of $\CH$-module vertex algebra.
For a $(V,\CH)$-module $W$, we study the contragredient dual $W'$. In Section 4, we study
invariant bilinear forms on $\CH$-module vertex algebras and present the main results.
In Section 5, we consider vertex algebras associated to affine Lie algebras and the Virasoro algebra, and
we show that the space of invariant bilinear forms is $1$-dimensional.

\textbf{Acknowledgement:}
For this research, Li is partially supported by the China NSF (grants 11471268, 11571391, 11671247) and Mu
is partially supported by the China NSF (grant 11571391)
and the Heilongjiang Provincial NSF (grant LC2015001).

\section{Basics}
Let $\BF$ be an algebraically closed field of an odd prime characteristic $p$, which is fixed throughout this paper.
All vector spaces (including algebras) are considered to be over $\BF$.
In addition to the standard usage of $\BZ$ for the integers,
we use  $\BZ_+$ for the positive integers and $\BN$ for the nonnegative integers.

Note that for any $m\in \BZ,\ k\in \BN$,
\begin{equation*}
\binom{m}{k}=\frac{m(m-1)\cdots (m+1-k)}{k!}\in \BZ.
\end{equation*}
Then we shall also view $\binom{m}{k}$ as an element of $\BF$.
Furthermore, for $m\in \BZ$ we have
\begin{equation*}
(x\pm z)^{m}=\sum_{k\ge 0}\binom{m}{k} (\pm 1)^{k}x^{m-k}z^{k}\in \BF[x,x^{-1}][[z]].
\end{equation*}
As in  \cite{LM}, define $\B$ to be the bialgebra with a basis  $\{\D^{(r)}\mid r\in\BN\}$, where
\begin{align*}
    \D^{(m)}\cdot  \D^{(n)}&=\binom{m+n}{n}\D^{(m+n)}, &
    \hspace*{-4em}\D^{(0)}&=1,\\
    \Delta(\D^{(n)})&=\sum_{i=0}^n \D^{(n-i)}\otimes \D^{(i)}, &
    \hspace*{-4em}\varepsilon(\D^{(n)})&=\delta_{n,0}
\end{align*}
for $m,n\in\BN$. We see that $\B$ is an $\BN$-graded algebra with
\begin{equation}
    \deg \D^{(n)}=n\   \   \mbox{ for }n\in \BN.
\end{equation}
Form a generating function
\begin{equation}
    e^{x\D}=\sum_{n\ge 0}x^{n}\D^{(n)}\in \B[[x]].
\end{equation}
The bialgebra structure of $\B$ can be described in terms of the generating functions as
\begin{eqnarray}\label{eq:eDeD-eD}
 e^{x\D}e^{z\D}=e^{(x+z)\D},\qquad
 \Delta(e^{x\D})=e^{x\D}\otimes e^{x\D},\qquad \varepsilon(e^{x\D})=1.
\end{eqnarray}
Especially, we have $ e^{x\D}e^{-x\D}=1$.

\begin{remark}
Let $U$ be any vector space. Then $U[[x,x^{-1}]]$ is a $\B$-module with $\D^{(n)}$ for $n\in \BN$ acting as
the {\em $n$-th Hasse differential operator} $\partial_x^{(n)}$ (with respect to $x$), which is defined by
\begin{equation}
    \partial_x^{(n)}x^m=\binom{m}{n}x^{m-n}\    \   \   \mbox{ for }m\in\BZ.
\end{equation}
Setting  $e^{z\partial_{x}}=\sum_{n\ge 0}z^{n}\partial_x^{(n)}$, we have
\begin{eqnarray}
A(x+z)=e^{z \partial_x}A(x)
\end{eqnarray}
for $A(x)\in U[[x,x^{-1}]]$.
\end{remark}

Recall that a {\em vertex algebra} over $\BF$ (cf. \cite{B86}) is a vector space $V$,
equipped with a vector ${\bf 1}\in V$, called the {\em vacuum vector}, and with a linear map
\begin{align*}
Y(\cdot,x):\ V&\to (\End V)[[x,x^{-1}]],\\
v&\mapsto Y(v,x)=\sum_{n\in \BZ}v_{n}x^{-n-1}\  \ (\mbox{where }v_{n}\in \End V),
\end{align*}
satisfying the following conditions for $u,v\in V$:
\begin{eqnarray*}
&&u_{n}v=0\   \   \   \mbox{ for $n$ sufficiently large},\\
&&Y({\bf 1},x)=1 \   \  (\mbox{where $1$ denotes the identity operator}),\\
&&Y(v,x){\bf 1}\in V[[x]]\  \  \mbox{and }\  \lim_{x\rightarrow 0}Y(v,x){\bf 1}=v,
\end{eqnarray*}
and
\begin{eqnarray}\label{Jacobi-va}
&&x_0^{-1}\delta\left(\frac{x_1-x_2}{x_0}\right)Y(u,x_1)Y(v,x_2)-x_0^{-1}\delta\left(\frac{x_2-x_1}{-x_0}\right)Y(v,x_2)Y(u,x_1)
\nonumber\\
&&\qquad \qquad =x_2^{-1}\delta\left(\frac{x_1-x_0}{x_2}\right)Y(Y(u,x_0)v,x_2)
\end{eqnarray}
(the {\em Jacobi identity}).

We have the following simple fact (see \cite{B86}):

\begin{lemma}
Let $V$ be a  vertex algebra. Then $V$ is a  $\B$-module with
\begin{equation}\label{special-skewsymm}
e^{x\D}v=\sum_{n\ge0}x^{n}\D^{(n)}v=Y(v,x)\1\   \   \   \mbox{ for }v\in V.
\end{equation}
Furthermore,  the following {\em skew-symmetry} holds for $u,v\in V$:
\begin{equation}
Y(u,x)v=e^{x\D}Y(v,-x)u=\sum_{n\ge 0}x^{n}\D^{(n)}Y(v,-x)u.
\end{equation}
\end{lemma}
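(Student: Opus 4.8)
The plan is to deduce both assertions from the Jacobi identity together with the vacuum axioms, using the Hasse-differential calculus of the Remark in place of ordinary derivatives. First I would note that the opening formula is essentially a repackaging of the creation axiom: since $Y(v,x)\1\in V[[x]]$ we have $v_n\1=0$ for $n\ge 0$, while $\lim_{x\to 0}Y(v,x)\1=v$ gives $v_{-1}\1=v$. Defining $\D^{(n)}v=v_{-n-1}\1$ then yields $Y(v,x)\1=\sum_{n\ge 0}(v_{-n-1}\1)x^{n}=e^{x\D}v$ with $\D^{(0)}=1$, and feeding in $v=\1$ together with $Y(\1,x)=1$ records $\D^{(n)}\1=\delta_{n,0}\1$, so that $e^{z\D}\1=\1$.

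To promote this to a genuine $\B$-module structure I must verify $\D^{(m)}\D^{(n)}=\binom{m+n}{n}\D^{(m+n)}$, and for this I would first establish a conjugation (translation) formula $e^{x_2\D}Y(u,x_0)e^{-x_2\D}=Y(u,x_0+x_2)$. The route is to apply the Jacobi identity to $\1$, replace $Y(u,x_1)\1$ and $Y(v,x_2)\1$ by $e^{x_1\D}u$ and $e^{x_2\D}v$, and take $\Res_{x_1}$. The term carrying $x_0^{-1}\delta(\frac{x_2-x_1}{-x_0})$ drops out, because that delta (expanded in nonnegative powers of $x_1$) and $e^{x_1\D}u$ both involve only nonnegative powers of $x_1$, leaving no residue; the surviving term collapses, via the substitution encoded in $x_0^{-1}\delta(\frac{x_1-x_2}{x_0})$, to $Y(u,x_0+x_2)e^{x_2\D}$, while the right-hand side becomes $e^{x_2\D}Y(u,x_0)$. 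Applying the resulting operator identity to $\1$ and using the first paragraph gives $e^{x_2\D}e^{x_0\D}=e^{(x_0+x_2)\D}$ on $V$; comparing coefficients of $x_2^{m}x_0^{n}$ in $e^{(x_0+x_2)\D}=\sum_{k}(x_0+x_2)^{k}\D^{(k)}$ produces exactly the desired relation.

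For skew-symmetry I would combine this conjugation formula with weak commutativity, itself a standard consequence of the Jacobi identity: there is $k\in\BN$ with $(x_1-x_2)^{k}Y(u,x_1)Y(v,x_2)=(x_1-x_2)^{k}Y(v,x_2)Y(u,x_1)$. Applying both sides to $\1$ and invoking creation gives $(x_1-x_2)^{k}Y(u,x_1)e^{x_2\D}v=(x_1-x_2)^{k}Y(v,x_2)e^{x_1\D}u$. Rewriting each side by the conjugation formula as $e^{x_2\D}Y(u,x_1-x_2)v$ and $e^{x_1\D}Y(v,x_2-x_1)u$ respectively, then cancelling the invertible operator $e^{x_2\D}$ and using $e^{-x_2\D}e^{x_1\D}=e^{(x_1-x_2)\D}$, I reduce to an identity depending on $x_1,x_2$ only through $x_0:=x_1-x_2$; passing to the single variable $x_0$ and cancelling $x_0^{k}$ yields $Y(u,x_0)v=e^{x_0\D}Y(v,-x_0)u$.

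I expect the main obstacle to be the formal-variable bookkeeping rather than any conceptual difficulty: one must track expansion conventions carefully so that the unwanted residue genuinely vanishes and the delta substitution genuinely produces $Y(u,x_0+x_2)$, and one must justify the final passage from two variables to one, which rests on the injectivity of $f(x_0)\mapsto f(x_1-x_2)$ and on cancelling the factor $(x_1-x_2)^{k}$. A secondary point specific to characteristic $p$ is that every differentiation has to be carried out with the divided-power operators $\D^{(n)}$ and $\partial_x^{(n)}$, since $\frac{1}{n!}\frac{d^{n}}{dx^{n}}$ is unavailable; this is precisely why the binomial coefficients $\binom{m+n}{n}$, rather than factorials, appear throughout.
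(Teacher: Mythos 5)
Your proof is correct, but note that the paper itself gives no argument for this lemma at all: it is stated as a ``simple fact'' with a citation to \cite{B86}, where the divided-power operators $\D^{(n)}$ and skew-symmetry are essentially built into Borcherds' definition of a vertex algebra. Since the present paper instead takes the Jacobi identity as the defining axiom, a derivation like yours is exactly what the citation is standing in for, and your route is the standard one (it is the characteristic-$0$ argument of \cite{LL}, Propositions 3.1.16--3.1.19, transplanted to characteristic $p$ with divided powers): creation defines the operators $\D^{(n)}$; $\Res_{x_1}$ of the Jacobi identity applied to $\1$ kills the second term and yields the inverse-free conjugation identity $Y(u,x_0+x_2)e^{x_2\D}=e^{x_2\D}Y(u,x_0)$; evaluating at $\1$ and comparing coefficients gives $\D^{(m)}\D^{(n)}=\binom{m+n}{n}\D^{(m+n)}$; and weak commutativity applied to $\1$, rewritten through the conjugation identity, collapses to a power series in $x_1-x_2$ from which skew-symmetry follows. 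Your conjugation step is precisely the paper's relation \eqref{V,B-module-property} (Lemma 2.4, also only cited there) specialized to the adjoint module, so your proof in effect supplies the missing proofs of both statements at once. Two small points of hygiene, neither a gap: writing the conjugation formula as $e^{x_2\D}Y(u,x_0)e^{-x_2\D}=Y(u,x_0+x_2)$ presupposes invertibility of $e^{x_2\D}$, which at that stage is not yet known to be $e^{-x_2\D}$ --- but your actual derivation produces the inverse-free form, which is all you use on $\1$ (and $e^{x\D}$, having constant term $1$, is invertible in $(\End V)[[x]]$ in any case); and in the skew-symmetry step the substitution needed is the legitimate variable renaming $x_0\mapsto -x_2$ in the two-variable conjugation identity, not a substitution of the compound expression $x_1-x_2$ into a Laurent series, which is the correct way to make your phrase ``rewriting each side by the conjugation formula'' precise. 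With $k$ chosen large enough that both $x^{k}Y(u,x)v$ and $x^{k}Y(v,x)u$ lie in $V[[x]]$, your final passage to one variable and cancellation of $x_0^{k}$ are justified exactly as you indicate.
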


For a vertex algebra $V$, a {\em $V$-module} is a vector space $W$ equipped with a linear map
\begin{align*}
Y_{W}(\cdot,x):\   V&\rightarrow (\End W)[[x,x^{-1}]],\\
v&\mapsto Y_{W}(v,x)=\sum_{n\in \BZ}v_{n}x^{-n-1},
\end{align*}
satisfying the conditions that $Y_{W}(\1,x)=1$,  $Y_{W}(v,x)w\in W((x))$ for $v\in V,\ w\in W$, and that
 for $u,v\in V$, the corresponding Jacobi identity (\ref{Jacobi-va}) holds.
We also often denote a $V$-module by a pair $(W,Y_W)$.

We have the following analogue of a result in characteristic zero (cf. \cite{LL}):

\begin{lemma}
Let $V$ be a vertex algebra and let  $(W,Y_{W})$ be any $V$-module. Then
\begin{eqnarray}
Y_{W}(e^{z\D}v,x)=Y_{W}(v,x+z)=e^{z\partial_{x}}Y_{W}(v,x)\   \   \  \mbox{ for }v\in V.
\end{eqnarray}
\end{lemma}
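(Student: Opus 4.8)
The statement to prove is that for a $V$-module $(W, Y_W)$, we have $Y_{W}(e^{z\D}v,x)=Y_{W}(v,x+z)=e^{z\partial_{x}}Y_{W}(v,x)$ for $v \in V$. Let me think through how I would establish this.

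The rightmost equality, $Y_{W}(v,x+z)=e^{z\partial_{x}}Y_{W}(v,x)$, is purely formal and should follow immediately from the Remark: treating $Y_W(v,x) = \sum_{n} v_n x^{-n-1}$ as an element of $(\End W)[[x,x^{-1}]]$, the operator $e^{z\partial_x}$ acts coefficient-wise as the translation $x \mapsto x+z$, which is exactly the content $A(x+z)=e^{z\partial_x}A(x)$ stated there. So the real work is the leftmost equality $Y_W(e^{z\D}v,x) = Y_W(v,x+z)$.

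The plan is to use the Jacobi identity. I would start from the iterate formula obtained by taking $\Res_{x_1}$ of the Jacobi identity for the module $W$, which extracts the relation $Y_W(Y(u,x_0)v, x_2) = \Res_{x_1}\big( x_2^{-1}\delta(\tfrac{x_1-x_0}{x_2})\big)^{-1}\cdots$; more cleanly, I would specialize $v = \1$ and use $u$ in the role of the argument. Concretely, apply the Jacobi identity with the second vector being $\1$. Since $Y(\1,x_0) = 1$ (the identity), we need the iterate $Y_W(Y(u,x_0)\1, x_2)$. By \eqref{special-skewsymm} we know $Y(u,x_0)\1 = e^{x_0\D}u$, so $Y_W(e^{x_0\D}u, x_2)$ appears naturally on the right-hand side of the iterate formula. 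The key computation is then to evaluate the right-hand side of the Jacobi identity using $Y_W(\1, x_1) = 1$ and the standard $\delta$-function substitution property $x_2^{-1}\delta(\tfrac{x_1-x_0}{x_2}) f(x_1) = x_2^{-1}\delta(\tfrac{x_1-x_0}{x_2}) f(x_0+x_2)$, which lets me replace $x_1$ by $x_0 + x_2$ inside $Y_W(u, x_1)$, yielding $Y_W(u, x_0+x_2)$. Matching the two sides and renaming $x_0 \to z$, $x_2 \to x$ gives $Y_W(e^{z\D}u, x) = Y_W(u, x+z)$.

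The step I expect to require the most care is the manipulation of the formal $\delta$-functions, specifically justifying the substitution $x_1 \mapsto x_0 + x_2$ and confirming that both the singular and regular parts assemble correctly in $(\End W)[[x_0, x_1^{\pm 1}, x_2^{\pm 1}]]$ without convergence ambiguities. In characteristic zero this is routine, but here one must check that the binomial-expansion conventions (recorded in Section 2, where $(x\pm z)^m$ is expanded in nonnegative powers of $z$) are compatible with the Hasse-derivative action defining $e^{z\partial_x}$, so that $Y_W(v,x+z)$ is unambiguously an element of $(\End W)[[x,x^{-1}]][[z]]$. Once the $\delta$-function identity is applied correctly, the remaining equalities are a direct reading-off of coefficients, and the identification of $e^{z\D}$ acting on $V$ with the translation operator on the module side is exactly the bridge provided by the preceding lemma.
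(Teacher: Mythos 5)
Your plan is correct and is precisely the standard argument that the paper itself invokes: the paper states this lemma without proof, citing the characteristic-zero analogue in \cite{LL}, and that proof is exactly your route --- Jacobi identity with second vector $\1$, simplification via $Y_W(\1,x_2)=1$ and the three-term $\delta$-function identity, the substitution $x_1\mapsto x_2+x_0$ inside $Y_W(u,x_1)$, the identification $Y(u,x_0)\1=e^{x_0\D}u$ from \eqref{special-skewsymm}, and finally $\Res_{x_1}$, all of which carries over to characteristic $p$ since every binomial coefficient involved is integral. The only blemishes are notational slips (the vacuum operator you invoke is $Y_W(\1,x_2)=1$, used on the left-hand side of the Jacobi identity, not $Y_W(\1,x_1)$ on the right-hand side), which do not affect the argument.
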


Following  \cite{LM}, we introduce a notion of $(V,\B)$-module.

\begin{definition}
Let $V$ be a vertex algebra. A {\em $(V,\B)$-module} is a $V$-module $(W,Y_W)$ which is also
a $\B$-module  such that
\begin{equation*}
    e^{x\D}Y_W(v,z)e^{-x\D}=Y_W(e^{x\D}v,z)\quad \mbox{ for }v\in V.
\end{equation*}
\end{definition}

For  any $(V,\B)$-module $(W,Y_W)$, we have
\begin{equation}\label{V,B-module-property}
    e^{x\D}Y_W(v,z)e^{-x\D}=Y_W(e^{x\D}v,z)=e^{x\partial_z}Y_W(v,z)=Y_W(v,z+x)\quad \mbox{ for }v\in V.
\end{equation}
Notice that the adjoint module of $V$ is automatically a $(V,\B)$-module.

The following result can be found in \cite{LM} (cf. \cite{Li94}):

\begin{lemma}\label{vacuum-like}
Let $V$ be a vertex algebra and let $W$ be a $(V,\B)$-module.
Suppose that $w$ is a vector in $W$ such that $\D^{(n)}w=0$ for $n\ge 1$.
Then
\begin{equation}\label{eq:WYd-vacuumlike-01}
    Y_W(v,x)w=e^{x\D}v_{-1}w\quad\mbox{ for  }v\in V,
\end{equation}
and the linear map $f_{w}:V\to W$ defined by $f_{w}(v)=v_{-1}w$ for $v\in V$ is a $V$-module homomorphism.
\end{lemma}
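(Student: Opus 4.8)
The plan is to deduce everything from the single compatibility relation (\ref{V,B-module-property}) together with the hypothesis on $w$. Since $\D^{(n)}w=0$ for $n\ge 1$ and $\D^{(0)}=1$, we have $e^{-x\D}w=w$. Applying (\ref{V,B-module-property}) to $w$ therefore gives the generating-function identity
\begin{equation*}
e^{x\D}Y_W(v,z)w=Y_W(v,z+x)w\qquad(v\in V),
\end{equation*}
where on the right $z+x$ is expanded in nonnegative powers of $x$. Writing $Y_W(v,z)w=\sum_{m}(v_mw)z^{-m-1}$ (a well-defined element of $W((z))$) and comparing the coefficients of $x^kz^{-j-1}$ on the two sides, I obtain the family of relations
\begin{equation*}
\D^{(k)}(v_jw)=\binom{k-j-1}{k}\,v_{j-k}w\qquad(j\in\BZ,\ k\in\BN).
\end{equation*}

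These relations must yield two things: the vanishing $v_mw=0$ for $m\ge 0$, and the values of the negative modes. Reindexing by $j\mapsto j+k$ turns the relation into $\binom{-j-1}{k}v_jw=\D^{(k)}(v_{j+k}w)$, where $\binom{-j-1}{k}=(-1)^k\binom{j+k}{j}$. Because $Y_W(v,z)w\in W((z))$ forces $v_mw=0$ for $m\ge M$ for some fixed $M$, the right-hand side vanishes as soon as $j+k\ge M$, so $\binom{-j-1}{k}v_jw=0$ for all sufficiently large $k$.

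The one genuinely modular step, and the main obstacle, is extracting $v_jw=0$ ($j\ge 0$) from this. In characteristic zero the analogous relation reduces to the one-step recursion $\D(v_{j+1}w)=-(j+1)v_jw$, which forces the vanishing by a trivial downward induction; in characteristic $p$ that recursion degenerates whenever $p\mid(j+1)$, so a different mechanism is needed. I would instead choose $k=p^s$ with $p^s>j$ and $p^s\ge M$: then $v_{j+k}w=0$, while $\binom{j+p^s}{j}\equiv 1\pmod p$ (for $0\le j<p^s$, by Lucas' theorem), so $\binom{-j-1}{k}\ne 0$ in $\BF$ and hence $v_jw=0$. This handles every $j\ge 0$. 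Finally, setting $j=-1$ in the original relation gives $\D^{(k)}(v_{-1}w)=\binom{k}{k}v_{-1-k}w=v_{-1-k}w$, whence $e^{x\D}(v_{-1}w)=\sum_{k\ge0}(v_{-1-k}w)x^k=Y_W(v,x)w$, which is (\ref{eq:WYd-vacuumlike-01}).

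For the homomorphism claim I would combine (\ref{eq:WYd-vacuumlike-01}) with weak associativity for $W$, which follows from the Jacobi identity (\ref{Jacobi-va}): for $u,v\in V$ there is $N\in\BN$ with
\begin{equation*}
(x_0+x_2)^NY_W(u,x_0+x_2)Y_W(v,x_2)w=(x_0+x_2)^NY_W(Y(u,x_0)v,x_2)w.
\end{equation*}
On the left I rewrite $Y_W(u,x_0+x_2)=e^{x_2\D}Y_W(u,x_0)e^{-x_2\D}$ via (\ref{V,B-module-property}) and $Y_W(v,x_2)w=e^{x_2\D}(v_{-1}w)$ via (\ref{eq:WYd-vacuumlike-01}); on the right I use (\ref{eq:WYd-vacuumlike-01}) coefficientwise to get $Y_W(Y(u,x_0)v,x_2)w=e^{x_2\D}(Y(u,x_0)v)_{-1}w$. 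After multiplying both sides by $e^{-x_2\D}$ and cancelling the common factor $(x_0+x_2)^N$ (whose $x_2^0$-coefficient $x_0^N$ is a unit in $W((x_0))$), this reduces to
\begin{equation*}
Y_W(u,x_0)(v_{-1}w)=(Y(u,x_0)v)_{-1}w,
\end{equation*}
that is, $Y_W(u,x_0)f_w(v)=f_w(Y(u,x_0)v)$. Extracting the coefficient of each power of $x_0$ yields $f_w(u_nv)=u_nf_w(v)$ for all $n$, so $f_w$ is a $V$-module homomorphism.
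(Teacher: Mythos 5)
Your proposal is correct. One point of comparison needs to be made explicit: the paper itself gives \emph{no} proof of this lemma --- it is quoted from the reference [LM] --- so your argument stands as a self-contained substitute rather than a variant of something in the text. The two halves of your proof are both sound and are the natural modular analogues of Li's characteristic-zero argument. In the first half, the coefficient identity $\D^{(k)}(v_{j+k}w)=\binom{-j-1}{k}v_jw$ extracted from $e^{x\D}Y_W(v,z)w=Y_W(v,z+x)w$ is correct (the case $k=1$ recovers the familiar $\D(v_jw)=-jv_{j-1}w$), and your replacement of the degenerate one-step recursion by the choice $k=p^s$ with $p^s>j$, $p^s\ge M$, together with Lucas' theorem giving $\binom{j+p^s}{j}\equiv 1\pmod p$, is exactly the right mechanism for killing the nonnegative modes in characteristic $p$; note the paper itself invokes Lucas' theorem in the proof of its Lemma \ref{D+inE+V}, so this tool is squarely within its toolkit. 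The specialization $j=-1$ then gives \eqref{eq:WYd-vacuumlike-01} as you state. In the second half, the only external ingredient is weak associativity for the module $W$, which you correctly identify as a consequence of the Jacobi identity; its standard derivation (multiply by $x_1^N$ with $u_mw=0$ for $m\ge N$ and take $\Res_{x_1}$) uses only integer binomial coefficients and delta-function calculus, hence is characteristic-free, so invoking it is legitimate even though the paper never states it. Your bookkeeping there is also right: $e^{-x_2\D}$ commutes with multiplication by the scalar series $(x_0+x_2)^N$, both sides live in $W((x_0))[[x_2]]$, and $(x_0+x_2)^N$ is invertible in $\BF((x_0))[[x_2]]$ (or, equivalently, one may set $x_2=0$ after stripping $e^{x_2\D}$ and cancel $x_0^N$), yielding $Y_W(u,x_0)f_w(v)=f_w(Y(u,x_0)v)$ and hence $u_nf_w(v)=f_w(u_nv)$ for all $n\in\BZ$.
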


Let $V$ be a vertex algebra and $W$ a $(V,\B)$-module.
Set
\begin{equation}
    W^\D=\{ w\in W\ |\ \D^{(n)}w=0\  \mbox{ for }n\ge 1\}.
\end{equation}
Recall that $\B$ is a bialgebra with counit map $\varepsilon$. Then we have
\begin{equation*}
W^{\D}=\{ w\in W\ |\  bw=\varepsilon(b)w\  \  \text{for }b\in \B\}.
\end{equation*}
Denote by $\Hom_{(V,\B)}(V,W)$ the set of all $(V,\B)$-module homomorphisms from $V$ to $W$,
which by definition are $V$-module homomorphisms that are also $\B$-module homomorphisms.

\begin{lemma}\label{th:f(1)-in-W^D}
Let $V$ be a vertex algebra and let $W$ be a $(V,\B)$-module.
Assume that $f:V\to W$ is a $V$-module homomorphism.
Then $f\in \Hom_{(V,\B)}(V,W)$  if and only if $f(\1)\in W^{\D}$. Furthermore,
the map $\phi: \Hom_{(V,\B)}(V,W)\rightarrow W^{\D}$ defined by $\phi(f)=f(\1)$
 for $f\in \Hom_{(V,\B)}(V,W)$ is a linear isomorphism.
\end{lemma}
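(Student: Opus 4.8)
The plan is to split the statement into its two assertions and to observe that the ``if and only if'' is exactly what makes the map $\phi$ well defined, after which injectivity and surjectivity follow quickly. The single tool I would use throughout is the identity $e^{x\D}v=Y(v,x)\1$ from (\ref{special-skewsymm}): it converts the $\B$-action on $V$ into vertex operators applied to the vacuum, and it is precisely in that form that the hypothesis ``$f$ is a $V$-module map'' can be brought to bear.

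For the forward implication of the ``iff'', I would start from $f\in\Hom_{(V,\B)}(V,W)$, so $f$ commutes with every $\D^{(n)}$. Specializing to $v=\1$ and using $Y(\1,x)=1$ (equivalently $\1_n=\delta_{n,-1}$) gives $\D^{(n)}\1=\1_{-n-1}\1=0$ for $n\ge 1$; hence $\D^{(n)}f(\1)=f(\D^{(n)}\1)=0$ for $n\ge 1$, which is exactly the statement $f(\1)\in W^{\D}$.

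The backward implication is the heart of the matter. Assuming $f(\1)\in W^{\D}$, I would compute $f(e^{x\D}v)$ in two ways. First, $f(e^{x\D}v)=f(Y(v,x)\1)=Y_W(v,x)f(\1)$, since $f$ is a $V$-module homomorphism. Second, because $f(\1)\in W^{\D}$, Lemma \ref{vacuum-like} yields $Y_W(v,x)f(\1)=e^{x\D}v_{-1}f(\1)$, and since $v_{-1}\1=v$ we have $v_{-1}f(\1)=f(v_{-1}\1)=f(v)$. Combining the two computations gives $f(e^{x\D}v)=e^{x\D}f(v)$, and reading off the coefficient of $x^{n}$ shows $f(\D^{(n)}v)=\D^{(n)}f(v)$ for all $n$ and all $v$, i.e.\ $f$ is also a $\B$-module homomorphism. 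This is where the work concentrates, and the main obstacle is simply recognizing that Lemma \ref{vacuum-like} is the exact device that propagates the single condition $f(\1)\in W^{\D}$ to compatibility with all of $\B$.

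Finally, for the isomorphism claim the ``iff'' already shows that $\phi$ takes values in $W^{\D}$, and linearity is immediate. For injectivity I would note that if $f(\1)=0$ then $f(v)=v_{-1}f(\1)=0$ for every $v\in V$, so $f=0$. For surjectivity, given $w\in W^{\D}$, Lemma \ref{vacuum-like} furnishes the $V$-module homomorphism $f_{w}(v)=v_{-1}w$, and $f_{w}(\1)=\1_{-1}w=w\in W^{\D}$; hence $f_{w}\in\Hom_{(V,\B)}(V,W)$ by the ``iff'' and $\phi(f_{w})=w$. Therefore $\phi$ is a linear isomorphism.
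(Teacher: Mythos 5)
Your proposal is correct and follows essentially the same route as the paper: the forward direction via $\D^{(n)}\1=0$ for $n\ge 1$ and commutation, the backward direction via the chain $f(e^{x\D}v)=f(Y(v,x)\1)=Y_W(v,x)f(\1)=e^{x\D}v_{-1}f(\1)=e^{x\D}f(v)$ using Lemma~\ref{vacuum-like}, and the same injectivity/surjectivity arguments for $\phi$. No gaps; the only cosmetic difference is that the paper phrases the forward implication with generating functions ($e^{x\D}\1=\1$) rather than coefficientwise.
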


\begin{proof}  Assume $f\in \Hom_{(V,\B)}(V,W)$. As $e^{x\D}\1=\1$ and $fe^{x\D}=e^{x\D}f$,
we have $f(\1)=f(e^{x\D}\1)=e^{x\D}f(\1)$, which implies $\D^{(r)}f(\1)=0$ for $r\ge 1$.
Thus, $f(\1)\in W^{\D}$.
Conversely, assume $f(\1)\in W^{\D}$.
For $v\in V$, using (\ref{special-skewsymm})  and  Lemma~\ref{vacuum-like}, we get
\begin{align*}
    f (e^{x\D}v)&=f (Y(v,x)\1)=Y_W(v,x)f(\1)=e^{x\D} v_{-1}f(\1)=e^{x\D} f(v_{-1}\1)
    =e^{x\D}f(v).
\end{align*}
This proves $fe^{x\D}=e^{x\D}f$, so $f\in \Hom_{(V,\B)}(V,W)$.

It is clear that $\phi$ is linear. If $f\in \ker \phi$, then
$f(v)=f(v_{-1}\1)=v_{-1}f(\1)=v_{-1}\phi(f)=0$ for every $v\in V$, which implies  $f=0$.
 Thus $\phi$ is injective.
Now, let $w\in W^\D$. By Lemma~\ref{vacuum-like}, we have a $V$-module homomorphism
$f_w$ from $V$ to $W$ with $f_{w}(v)=v_{-1}w$ for $v\in V$, where $\phi(f_{w})=f_{w}(\1)=\1_{-1}w=w$.
By the first part, we have $f_w\in \Hom_{(V,\B)}(V,W)$.
This proves that $\phi$ is surjective.
 Therefore, $\phi$ is a linear isomorphism.
\end{proof}

\begin{definition}
A \emph{$\BZ$-graded vertex algebra} is a vertex algebra $V$ equipped with a $\BZ$-grading
$V=\bigoplus_{n\in\BZ}V_n$ such that
\begin{gather}
    \1\in V_0,\label{eq:deg-vacuum}\\
    u_rV_{n}\subset V_{m+n-r-1} \label{eq:deg-V}
\end{gather}
for $u\in V_m$ with $m\in \BZ$ and for $n, r\in\BZ$.
\end{definition}

Note that the condition (\ref{eq:deg-vacuum}) actually follows from (\ref{eq:deg-V}) and the creation property (\ref{special-skewsymm}). On the other hand,
from (\ref{special-skewsymm}),  (\ref{eq:deg-vacuum}) and (\ref{eq:deg-V}), we have
\begin{equation}\label{Ddegree}
    \D^{(r)}V_{n}\subset V_{n+r}\quad\mbox{ for }r\in \BN,\ n\in \BZ.
\end{equation}
Thus, every $\BZ$-graded vertex algebra is automatically a $\BZ$-graded $\B$-module.

Let $V$ be a $\BZ$-graded vertex algebra.  A {\em $\BZ$-graded $V$-module} is a $V$-module equipped with a
$\BZ$-grading $W=\bigoplus_{n\in\BZ} W_n$ such that
\begin{equation}
    v_{m}W_{n}\subset W_{k+n-m-1}\quad\mbox{ for }v\in V_{k},\ k,m,n\in \BZ.
\end{equation}

Let $W=\bigoplus_{n\in\BZ}W_n$ be a $\BZ$-graded vector space.
For $w\in W_n$ with $n\in \BZ$, we set
\begin{equation}
    \deg w=n\in \BZ
\end{equation}
and we say $w$ is \emph{homogeneous of degree $n$}.
For homogeneous vector $w\in W$, we define
\begin{equation*}
    x^{\deg} (w)=x^{\deg w}w\in W[x,x^{-1}].
\end{equation*}
Then extend this definition linearly to get a linear map
\begin{equation}
    x^{\deg}: W\to W[x,x^{-1}].
\end{equation}

\begin{remark}\label{1-x=deg}
Let $W=\bigoplus_{n\in\BZ}W_n$ be a $\BZ$-graded vector space.
From definition, a vector $w\in W$ is homogeneous of degree $n\in \BZ$  if and only if $x^{\deg}w=x^{n}w$.
On the other hand, it is straightforward to show that a vector $w\in W$ is homogeneous of degree $n$ if and only if
\begin{equation*}
(1-x)^{-2\deg}w=(1-x)^{-2n}w.
\end{equation*}
\end{remark}

Let $V=\bigoplus_{n\in\BZ}V_n$ be a $\BZ$-graded vertex algebra. From \cite{FHL} we have
\begin{equation}\label{eq:conj-deg}
    z^{\deg}Y(v,z_0)z^{-\deg}=Y(z^{\deg}v,zz_0)
\end{equation}
for $v\in V$.  Note that relation (\ref{Ddegree}) amounts  to
\begin{equation}
    z^{\deg}\D^{(r)}z^{-\deg}=z^{r}\D^{(r)}
\end{equation}
on $V$ for $r\in \BN$, which can be written in terms of the generating function as
\begin{equation}\label{eq:conj-deg-e^D}
    z^{\deg}e^{x\D}z^{-\deg}=e^{xz\D}.
\end{equation}

\section{$\CH$-module vertex algebras and contragredient duals}

In characteristic $0$, a theory of contragredient modules for vertex operator algebras was established  in \cite{FHL} and
it was also extended for what were  therein called quasi vertex operator algebras.
The notion of quasi vertex operator algebra generalizes the notion of vertex operator algebra in the way that
a natural module structure on every vertex operator algebra for the Virasoro algebra
is reduced to a suitable module structure for Lie algebra $\Sl_{2}$ on every quasi vertex operator algebra.
 In this section, we shall study Frenkel-Huang-Lepowsky's theory in characteristic $p$.

We begin to introduce a bialgebra $\CH$.
Consider the $3$-dimensional simple Lie algebra $\Sl_2$ over $\BC$
with a basis 
$\{L_{-1},L_0,L_1\}$ such that
\begin{equation*}
    [L_{1},L_{-1}]=2L_0,\    \    \   \   [L_0, L_{\pm 1}]=\mp L_{\pm 1}.
\end{equation*}
The universal enveloping algebra $U(\Sl_2)$ is naturally a Hopf algebra, especially a bialgebra over $\BC$.
View $U(\Sl_2)$ as a $\BZ$-graded algebra with
\begin{eqnarray}
\deg L_{\pm 1}=\mp 1,\   \   \   \   \deg L_{0}=0.
\end{eqnarray}
Note that the following relations hold in  $U(\Sl_2)$ (see \cite{FHL}):
\begin{align}
    z^{L_{0}}e^{xL_{\pm 1}}&=e^{xz^{\mp 1}L_{\pm 1}}z^{L_0},\label{FHL-1}\\
    e^{xL_1}e^{zL_{-1}}&=e^{(1-xz)^{-1}zL_{-1}}(1-xz)^{-2L_0}e^{(1-xz)^{-1}xL_1}.\label{FHL-2}
\end{align}

For $n\in\BN$, set
\begin{equation}
    L_{\pm 1}^{(n)}=\frac{L_{\pm 1}^n}{n!},\   \   \    \
   L_{0}^{(n)}= \binom{-2L_0}{n}=\frac{(-2L_0)(-2L_0-1)\cdots(-2L_0-n+1)}{n!}
\end{equation}
in $U(\Sl_2)$.
Denote by $U(\Sl_2)_\BZ$ the $\BZ$-span  in $U(\Sl_2)$  of elements
\begin{equation}\label{eUbasis}
    L_{-1}^{(i)}L_0^{(j)}L_1^{(k)}
\end{equation}
for $i,j,k\in\BN$. Note that $U(\Sl_2)_\BZ$ is also the subring generated by $L_{\pm 1}^{(n)},\   \   L_{0}^{(n)}$ for
$n\in \BN$.
According to a theorem of Kostant (cf.~\cite{hum}), $U(\Sl_2)_\BZ$ is an integral form of $U(\Sl_2)$
viewed as a Hopf algebra.
The following relations hold in $U(\Sl_2)_{\BZ}$ for $m,n\in\BN$:
\begin{align}
L_0^{(m)}L_0^{(n)}&=\sum_{j=0}^{m}\binom{m}{j} \binom{n+j}{m}L_0^{(n+j)},\label{e2L0-multiplication}\\
L_{\pm 1}^{(m)}L_{\pm 1}^{(n)}&=\binom{m+n}{m}L_{\pm 1}^{(m+n)}, \label{eq-1}\\
      L_0^{(m)} L_{\pm 1}^{(n)}&=L_{\pm 1}^{(n)}\binom{-2L_0\pm 2n}{m}
    =\sum_{i=0}^{m}\binom{\pm 2n}{i}L_{\pm1}^{(n)}L_{0}^{(m-i)}, \label{eq-3}\\
L_{1}^{(m)}L_{-1}^{(n)}&=\sum_{i=0}^{\min(m,n)}\sum_{j=0}^{i}\binom{-m-n+2i}{j}(-1)^iL_{-1}^{(n-i)}L_{0}^{(i-j)}L_{1}^{(m-i)},\label{eq-5}\\
   L_{\pm 1}^{(m)}L_{0}^{(n)}&=\sum_{i=0}^{n}\binom{-2m}{i}L_{0}^{(n-i)}L_{\pm 1}^{(m)}.\nonumber
\end{align}
For the coalgebra structure of $\CH$, we have
\begin{align}
    \varepsilon(L_{\pm 1}^{(n)})&=\delta_{n,0},&\varepsilon (L_{0}^{(n)})&=\delta_{n,0},\label{coalgebra1}\\
    \Delta(L_{\pm 1}^{(n)})&=\sum_{i=0}^{n}L_{\pm 1}^{(n-i)}\otimes L_{\pm 1}^{(i)},&
    \Delta (L_{0}^{(n)})&=\sum_{i=0}^{n}L_0^{(n-i)}\otimes L_0^{(i)}\label{coalgebra2}
\end{align}
for $n\in \BN$, noticing that
\begin{align}
\Delta \binom{-2L_0}{n}=\binom{-2L_0\otimes 1-1\otimes 2L_0}{n}
=\sum_{i=0}^{n}\binom{-2L_0\otimes 1}{n-i}
\binom{-1\otimes 2L_0}{i}.
\end{align}

Now, we define
\begin{equation}
\CH=\BF\otimes_{\BZ} U(\Sl_2)_\BZ,
\end{equation}
which is a Hopf algebra over $\BF$. It follows that
the elements in (\ref{eUbasis}) give rise to a basis of $\CH$ and the bialgebra structure is given by
(\ref{e2L0-multiplication}--\ref{eq-5}), (\ref{coalgebra1}), and (\ref{coalgebra2}).


In fact,  one can show that $\CH$ is isomorphic to the associative algebra with generators
\begin{equation*}
L_{-1}^{(n)},\   \   L_{0}^{(n)},\   \   L_{1}^{(n)} \   \   (n\in \BN),
\end{equation*}
subject to the relations above and the following relation
\begin{align}
    L_{-1}^{(0)}&=L_{1}^{(0)}=L_{0}^{(0)}=1,   \label{eq:H1}
 \end{align}

\begin{remark}\label{anti-automorphism}
Note that Lie algebra $\Sl_2$ admits an order-$2$ automorphism  $\sigma$  given by
\begin{eqnarray}
\sigma(L_{\pm 1})=L_{\mp 1},\    \   \   \    \sigma(L_0)=-L_0.
\end{eqnarray}
This naturally gives rise to an automorphism  of $\CH$, which is also denoted by $\sigma$, such that
\begin{eqnarray}
\sigma(L_{\pm 1}^{(n)})=L_{\mp 1}^{(n)},\   \   \   \   \sigma(L_0^{(n)})=(-1)^{n}\sum_{i=0}^{n}\binom{n-1}{i}L_0^{(n-i)}
\end{eqnarray}
for $n\in \BN$.
On the other hand,  Lie algebra $\Sl_2$ admits an anti-automorphism  $\theta$  given by
\begin{equation*}
\theta(L_{\pm 1})=L_{\mp 1},\    \   \   \    \theta(L_0)=L_0.
\end{equation*}
It then follows that
$\CH$ admits an anti-automorphism also denoted by  $\theta$  such that
\begin{eqnarray}
\theta(L_{\pm 1}^{(n)})=L_{\mp 1}^{(n)},\    \   \     \    \   \   \theta(L_0^{(n)})=L_0^{(n)}
\   \   \   \mbox{ for }n\in \BN.
\end{eqnarray}
\end{remark}

Set
\begin{eqnarray}
\CH_{0}=\sum_{n\in \BN}\BF L_0^{(n)},\    \   \   \    \CH_{\pm}=\sum_{n\in \BN}\BF L_{\pm 1}^{(n)}.
\end{eqnarray}
These are subalgebras of $\CH$ and in fact,
 $\{ L_{0}^{(n)}\ |\ n\in \BN\}$ is a basis of $\CH_{0}$,
$\{L_{\pm 1}^{(r)}\ |\ r\in\BN\}$ is a basis of $\CH_{\pm}$.
Both $\CH_{+}$ and $\CH_{-}$ are isomorphic to the bialgebra $\B$ and
we have $\CH =\CH_{+}\otimes \CH_{0}\otimes \CH_{-}$.

Note that $\CH$ is a $\BZ$-graded algebra with
\begin{equation}
\deg L_{\pm 1}^{(n)}=\mp n,\  \   \     \  \deg L_0^{(n)}=0\    \   \   \mbox{ for }n\in \BN.
\end{equation}
We then define the notion of $\BZ$-graded $\CH$-module in the obvious way. Furthermore, we have:

\begin{definition}\label{def-graded-H-module}
A \emph{$\BZ$-graded weight $\CH$-module} is a $\BZ$-graded $\CH$-module $W=\bigoplus_{n\in\BZ}W_n$
on which  $L_{0}^{(r)}$ acts as $\binom{-2\deg}{r}$ for $r\in\BN$, i.e.,
\begin{eqnarray}
L_0^{(r)}|_{W_{n}}=\binom{-2n}{r}\   \   \mbox{ for }r\in\BN,\ n\in\BZ.
\end{eqnarray}
\end{definition}

\begin{remark}\label{-2deg=L0}
Let  $W$ be a $\BZ$-graded weight $\CH$-module. Then we have
\begin{align}\label{deg=L0}
(1+z)^{-2\deg}w=\sum_{i\ge 0}\binom{-2\deg}{i}wz^{i}=\sum_{i\ge 0}L_{0}^{(i)}wz^{i}
\end{align}
for $w\in W$.
\end{remark}

\begin{example}\label{Laurent}
Consider $\BF[x,x^{-1}]$ as a $\BZ$-graded algebra with  $\deg x^{m}=-m$ for $m\in \BZ$.
Then $\BF[x,x^{-1}]$ is a $\BZ$-graded weight $\CH$-module algebra with
\begin{eqnarray}
L_{-1}^{(r)} \cdot x^{m}=(-1)^{r}\binom{m}{r}x^{m-r},\    \    \   L_{1}^{(r)} \cdot x^{m}=\binom{-m}{r}x^{m+r},\    \    \
L_{0}^{(r)}\cdot x^{m}=\binom{2m}{r}x^{m}
 \end{eqnarray}
 for $r\in \BN$.
\end{example}

Set
\begin{equation*}
    e^{zL_{1}}=\sum_{n\in \BN}z^n L_{1}^{(n)}\in \CH_{+}[[z]].
\end{equation*}
We have
\begin{equation}\label{eq:eEeE-eE}
    e^{zL_{1}}e^{xL_{1}}=e^{(z+x)L_{1}}\quad\text{and}\quad e^{zL_{1}}e^{-zL_{1}}=1.
\end{equation}
Note that  we have
\begin{equation}
    z^{\deg} L_{1}^{(r)}z^{-\deg}=z^{-r}L_{1}^{(r)}
\end{equation}
on every $\BZ$-graded $\CH$-module for $r\in \BN$, which amounts to
\begin{equation}\label{eq:conj-deg-e^E}
    z^{\deg} e^{xL_{1}}z^{-\deg}=e^{z^{-1}xL_{1}}.
\end{equation}

By \eqref{FHL-1} and \eqref{FHL-2}, we have:

\begin{lemma}
Let $W$ be a $\BZ$-graded weight $\CH$-module. Then the following relations hold on $W$:
\begin{align}
    e^{zL_{1}}e^{z_0L_{-1}}&=e^{(1-zz_0)^{-1}z_0L_{-1}}(1-zz_0)^{-2\deg}e^{(1-zz_0)^{-1}zL_{1}}\nonumber\\
    &=e^{(1-zz_0)^{-1}z_0L_{-1}}e^{z(1-zz_0)L_{1}}(1-zz_0)^{-2\deg},\  \  \  \  \  \   \label{eq:ED-deg-2}\\
   (1-zz_0)^{-2\deg}&= e^{-z(1-zz_0)L_{1}}e^{-(1-zz_0)^{-1}z_0L_{-1}}e^{zL_{1}}e^{z_0L_{-1}}. \label{eq:ED-deg-3}
\end{align}
\end{lemma}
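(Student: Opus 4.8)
The plan is to obtain all three relations from the characteristic-zero identity \eqref{FHL-2} together with the formal conjugation rule \eqref{eq:conj-deg-e^E}, the only genuinely new ingredient being the passage from $\BC$ to $\BF$. First I would rewrite \eqref{FHL-2} after the substitution $x\mapsto z$, $z\mapsto z_0$, so that its right-hand side becomes $e^{(1-zz_0)^{-1}z_0L_{-1}}(1-zz_0)^{-2L_0}e^{(1-zz_0)^{-1}zL_1}$, matching the shape of the first line of \eqref{eq:ED-deg-2} once $(1-zz_0)^{-2L_0}$ is reinterpreted on $W$.

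The main obstacle is justifying that \eqref{FHL-2}, an identity in (a completion of) $U(\Sl_2)$ over $\BC$, descends to $\CH$. I would expand both sides as generating series in $z,z_0$, writing $e^{tL_{\pm1}}=\sum_n t^nL_{\pm1}^{(n)}$ and $(1-zz_0)^{-2L_0}=\sum_k(-1)^kL_0^{(k)}(zz_0)^k$, and using the integer binomial expansions of $(1-zz_0)^{-1}$ and $(1-zz_0)^{-m}$. A short check then shows that the coefficient of each monomial $z^az_0^b$ on both sides is a finite $\BZ$-linear combination of the basis elements \eqref{eUbasis} --- on the left it is simply $L_1^{(a)}L_{-1}^{(b)}$, which lies in $U(\Sl_2)_\BZ$ by \eqref{eq-5}, and on the right it is a finite sum of products of divided powers. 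Since $U(\Sl_2)_\BZ$ is an integral form, it embeds into $U(\Sl_2)=\BC\otimes_\BZ U(\Sl_2)_\BZ$, so equality of the complex coefficients forces equality of the integral ones; applying $\BF\otimes_\BZ(-)$ then yields the same identity in $\CH[[z,z_0]]$. Finally, on the weight module $W$ one has $(1-zz_0)^{-2L_0}=(1-zz_0)^{-2\deg}$: by Definition \ref{def-graded-H-module} and Remark \ref{-2deg=L0} the restriction to $W_n$ of either operator is $\sum_k(-1)^k\binom{-2n}{k}(zz_0)^k=(1-zz_0)^{-2n}$. This gives the first equality of \eqref{eq:ED-deg-2}.

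For the second equality I would commute $(1-zz_0)^{-2\deg}$ to the right of $e^{(1-zz_0)^{-1}zL_1}$. Because $L_1^{(r)}$ lowers degree by $r$, the relation \eqref{eq:conj-deg-e^E} upgrades, for any unit $a\in\BF[[z,z_0]]$, to $a^{\deg}e^{wL_1}a^{-\deg}=e^{a^{-1}wL_1}$; taking $a=(1-zz_0)^{-2}$ and $w=(1-zz_0)^{-1}z$ gives $a^{-1}w=z(1-zz_0)$ and hence $(1-zz_0)^{-2\deg}e^{(1-zz_0)^{-1}zL_1}=e^{z(1-zz_0)L_1}(1-zz_0)^{-2\deg}$, which is the second line of \eqref{eq:ED-deg-2}. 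For \eqref{eq:ED-deg-3} I would then left-multiply the second form of \eqref{eq:ED-deg-2} by $e^{-z(1-zz_0)L_1}e^{-(1-zz_0)^{-1}z_0L_{-1}}$ and cancel using $e^{tL_1}e^{-tL_1}=1$ from \eqref{eq:eEeE-eE} and $e^{tL_{-1}}e^{-tL_{-1}}=1$ (the analogue from \eqref{eq:eDeD-eD}, as $\CH_-\cong\B$); the two exponential pairs telescope and the right-hand side collapses to $(1-zz_0)^{-2\deg}$. I expect the integral descent of the middle step to be the only real difficulty, the rest being formal power-series bookkeeping.
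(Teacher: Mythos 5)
Your proposal is correct and follows essentially the same route as the paper, which offers no detailed argument beyond the phrase ``By \eqref{FHL-1} and \eqref{FHL-2}, we have'' --- i.e.\ precisely the plan you execute: descend the characteristic-zero identity \eqref{FHL-2} through the integral form $U(\Sl_2)_\BZ$ to $\CH$, identify $(1-zz_0)^{-2L_0}$ with $(1-zz_0)^{-2\deg}$ on a weight module, commute it past the $L_1$-exponential via the degree-conjugation relation (the module form of \eqref{FHL-1}), and cancel exponentials to get \eqref{eq:ED-deg-3}. Your write-up merely supplies the coefficient-wise justification for the integral descent that the paper leaves implicit.
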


\begin{remark}
Recall that for a general bialgebra $B$, a {\em $B$-module algebra} is an associative unital algebra $A$
which is also a $B$-module satisfying the condition
\begin{equation*}
b\cdot 1_{A}=\varepsilon(b)1_{A},\    \    \   \    b(aa')=\sum (b^{(1)}a)(b^{(2)}a')
\end{equation*}
 for $b\in B,\  a,a'\in A$, where $\Delta(b)=\sum b^{(1)}\otimes b^{(2)}$ in the Sweedler notation.
\end{remark}

The following is a vertex algebra analogue
(recalling that $\B$ $(\simeq \CH_{-})$ is a subalgebra of $\CH$):

\begin{definition}\label{def-good}
An \emph{$\CH$-module vertex algebra} is a $\BZ$-graded vertex algebra $V$ which is also
a $\BZ$-graded weight $\CH$-module (with the natural $\B$-module action)
satisfying the following conditions:
\begin{enumerate}[(i)]
\item $V_n=0$ for $n$ sufficiently negative.
\label{condition-grading-truncated}
\item $L_{1}^{(n)}\1=\varepsilon(L_{1}^{(n)})\1=\delta_{n,0}\1$ for $n\in\BN$. \label{condition-delta1}
\item  For $v\in V$,
\begin{equation}
    e^{zL_{1}}Y(v,z_0)e^{-zL_{1}}=Y\bigl(e^{z(1-zz_0)L_{1}}(1-zz_0)^{-2\deg}v,z_0/(1-zz_0)\bigr). \label{condition-conj-E}
\end{equation}
\end{enumerate}
\end{definition}

\begin{remark}
Let $V$ be an $\CH$-module vertex algebra.  As $V$ is  a $\BZ$-graded vertex algebra, we have
$L_{-1}^{(n)}\1=\delta_{n,0}\1$, $L_{0}^{(n)}\1=\delta_{n,0}\1$ for $n\in \BN$.
Then $b\1=\varepsilon(b)\1$ for $b\in \{ L_{1}^{(n)},\  L_{-1}^{(n)},\  L_{0}^{(n)}\ |\ n\in \BN\}$.
Since $\CH$ as an algebra is generated by this subset, it follows that $b\1=\varepsilon(b)\1$ for all $b\in \CH$.
\end{remark}

Let $V$ be an $\CH$-module vertex algebra and let $W=\bigoplus_{n\in\BZ} W_n$ be a $\BZ$-graded $V$-module
which is {\em lower truncated} in the sense that $W_n=0$ for $n$ sufficiently negative.
Just as in~\cite{FHL}, set
\begin{equation*}
    W'=\bigoplus_{n\in\BZ} W^*_n,
\end{equation*}
which is called the \emph{restricted dual} of $W$.
Define a linear map
\begin{align*}
    Y'(v,z):V&\to (\End W')[[z,z^{-1}]]\\
    v &\mapsto Y'(v,z)=\sum_{n\in\BZ}v'_n z^{-n-1}
\end{align*}
by
\begin{equation}
    \langle Y'(v,z)w',w\rangle=\langle w',Y(e^{zL_{1}}(-z^{-2})^{\deg}v,z^{-1})w\rangle
\end{equation}
for $v\in V$, $w'\in W'$ and $w\in W$.

The following results  of \cite{FHL} (Propositions 5.3.1 and 5.3.2) are also valid here (with the same proof):

\begin{proposition}\label{th:W'-is-graded-module}
Let $V$ be an $\CH$-module vertex algebra and let $W$ be a lower truncated $\BZ$-graded module
for $V$ as a vertex algebra. Then $W'$ is a lower truncated $\BZ$-graded $V$-module.
\end{proposition}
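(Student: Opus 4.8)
The plan is to follow Frenkel--Huang--Lepowsky's proof of Proposition~5.3.1 of \cite{FHL} verbatim, the only adjustment being that all exponentials and binomial coefficients are read in $\BF$ through the divided powers $L_{\pm1}^{(n)},L_0^{(n)}$, so that the conjugation identity \eqref{condition-conj-E} and the grading-conjugation \eqref{eq:conj-deg} play the roles that $e^{zL(1)}$ and $z^{L(0)}$ play in characteristic zero. Throughout, write $\mathcal{A}(z)=e^{zL_{1}}(-z^{-2})^{\deg}$, so that by definition $\langle Y'(v,z)w',w\rangle=\langle w',Y(\mathcal{A}(z)v,z^{-1})w\rangle$, and note that every operator identity on $W'$ may be checked by pairing with all $w\in W$, since $W'=\bigoplus_n W_n^*$ separates points of $W$ degreewise.

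First I would fix the grading and dispose of truncation. I claim $W'$ is $\BZ$-graded by $W'_n=W_n^*$. Indeed, for homogeneous $v\in V_l$, $w'\in W_s^*$, $w\in W_k$, expanding $\mathcal{A}(z)v=(-1)^l\sum_{n\ge0}z^{n-2l}L_1^{(n)}v$ with $L_1^{(n)}v\in V_{l-n}$ and using the grading of the $V$-module $W$ shows that $\langle w',Y(\mathcal{A}(z)v,z^{-1})w\rangle$ is supported on the single power $z^{-l+k-s}$; comparing with $\langle Y'(v,z)w',w\rangle=\sum_m\langle v'_mw',w\rangle z^{-m-1}$ forces $k=l+s-1-m$, hence $v'_mw'\in W_{l+s-1-m}^*=W'_{l+s-1-m}$, which is exactly the grading condition $v'_mW'_s\subset W'_{l+s-m-1}$. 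Since $W$ is lower truncated, $W'_n=W_n^*=0$ for $n\ll0$, so $W'$ is lower truncated; and since the constraint $k=l+s-1-m$ together with $W_k=0$ for $k\ll0$ kills $v'_mw'$ for $m\gg0$, we get $Y'(v,z)w'\in W'((z))$. The vacuum property is immediate: $\deg\1=0$ and $L_1^{(n)}\1=\delta_{n,0}\1$ by Definition~\ref{def-good}(ii), so $\mathcal{A}(z)\1=\1$ and $Y'(\1,z)$ is the transpose of $Y(\1,z^{-1})=1$.

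The substance is the Jacobi identity for $Y'$. Pairing the three terms against arbitrary $w\in W$, $w'\in W'$ and transferring every $Y'$ onto the $W$-side (which reverses the order of a product of two factors) gives
\begin{align*}
\langle Y'(u,x_1)Y'(v,x_2)w',w\rangle &= \langle w',Y(\mathcal{A}(x_2)v,x_2^{-1})Y(\mathcal{A}(x_1)u,x_1^{-1})w\rangle,\\
\langle Y'(Y(u,x_0)v,x_2)w',w\rangle &= \langle w',Y(\mathcal{A}(x_2)Y(u,x_0)v,x_2^{-1})w\rangle,
\end{align*}
and symmetrically for the opposite-order product. One then applies the Jacobi identity \eqref{Jacobi-va} for $Y$ to the pair $\mathcal{A}(x_1)u,\ \mathcal{A}(x_2)v$ with position variables specialized to $x_1^{-1},x_2^{-1}$, together with the $\delta$-function substitution rules, which are purely formal and hold over any field. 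The one genuinely new input is that the iterate produced on the right, $Y(Y(\mathcal{A}(x_1)u,\cdot)\mathcal{A}(x_2)v,x_2^{-1})$, must be rewritten as $Y(\mathcal{A}(x_2)Y(u,x_0)v,x_2^{-1})$; this is where the conjugation formula \eqref{condition-conj-E} (the compatibility of $e^{zL_1}$ and $(1-\,\cdot\,)^{-2\deg}$ with $Y$), together with \eqref{eq:conj-deg}, is used to move the twist $\mathcal{A}(x_1)$ past the vertex operator and recombine it with $\mathcal{A}(x_2)$. Since \eqref{condition-conj-E} was constructed precisely to reproduce \eqref{FHL-2} in $\BF$, this step transcribes without change from \cite{FHL}.

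The main obstacle, and the only point where the modular setting demands care, is exactly this recombination of the twists inside the iterate: one must verify that the change of variables $x_i\mapsto x_i^{-1}$, the factors $(1-x_0/x_2)^{-2\deg}$ and $e^{\ast L_1}$ generated by \eqref{condition-conj-E}, and the $\delta$-function identities all agree coefficientwise in $\BF$, using the interpretation of $(1\pm x)^m$ and of $\binom{m}{k}$ in $\BF$ from Section~2 (and, where the $L_{\pm1}$-exponentials must be disentangled, the $\Sl_2$-relations \eqref{eq:ED-deg-2}--\eqref{eq:ED-deg-3}). Because every exponential has been replaced at the outset by its divided-power generating function and every scalar identity invoked is an identity of integers reduced modulo $p$, no cancellation valid over $\BC$ is lost in characteristic $p$, so the Frenkel--Huang--Lepowsky computation goes through verbatim and yields the Jacobi identity for $Y'$; combined with the grading, truncation, and vacuum properties above, this shows $W'$ is a lower truncated $\BZ$-graded $V$-module.
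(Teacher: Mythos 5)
Your proposal is correct and matches the paper's own treatment: the paper proves this proposition simply by invoking Frenkel--Huang--Lepowsky's proof of Proposition 5.3.1 "with the same proof," which is exactly what you carry out, using the conjugation axiom \eqref{condition-conj-E} and the grading conjugation \eqref{eq:conj-deg} as the modular stand-ins for $e^{zL(1)}$ and $z^{L(0)}$. Your explicit verification of the grading, truncation, and vacuum properties, and your identification of the twist-recombination in the iterate as the only step needing the $\CH$-module vertex algebra axioms, are precisely the content the paper delegates to \cite{FHL}.
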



\begin{proposition}
Let $V$ be an $\CH$-module vertex algebra and let $W$ be a lower truncated $\BZ$-graded $V$-module.
 Then $W$ is a $V$-submodule of $(W')'$.
Furthermore, if $\dim W_n<\infty$ for all $n\in\BZ$, then $W\cong (W')'$.
In this case, $W$ is irreducible if and only if $W'$ is irreducible.
\end{proposition}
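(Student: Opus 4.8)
The plan is to mirror the proof of FHL's Proposition 5.3.2, adapted to the bialgebra $\CH$ and to the formal operators $e^{zL_{1}}$ and $(-z^{-2})^{\deg}$ in characteristic $p$. First I would introduce the canonical graded linear map $\iota\colon W\to (W')'$ defined on $W_n$ by $\langle \iota(w),w'\rangle=\langle w',w\rangle$ for $w'\in W_n^{*}=(W')_n$. Since for each $n$ every nonzero $w\in W_n$ is separated by some functional in $W_n^{*}$, the map $\iota$ is injective, and with the convention $(W')_n=W_n^{*}$ it is degree-preserving. The substance of the first assertion is that $\iota$ intertwines the $V$-actions, i.e.\ $Y''(v,z)\iota(w)=\iota(Y(v,z)w)$, where $Y''=(Y')'$ denotes the module structure on $(W')'$ obtained by applying the restricted-dual construction twice (which is legitimate because Proposition~\ref{th:W'-is-graded-module} guarantees $W'$ is again a lower truncated $\BZ$-graded $V$-module).

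To verify the intertwining property, I would unwind the two defining formulas. Writing $A(z)=e^{zL_{1}}(-z^{-2})^{\deg}$, the defining relation for $Y'$ (applied with $W'$ in place of $W$) gives
\[
\langle Y''(v,z)\iota(w),w'\rangle=\langle \iota(w),Y'(A(z)v,z^{-1})w'\rangle=\langle Y'(A(z)v,z^{-1})w',w\rangle,
\]
and one more application of the defining relation for $Y'$ rewrites the right-hand side as $\langle w',Y(A(z^{-1})A(z)v,z)w\rangle$. Thus the claim reduces to the operator identity $A(z^{-1})A(z)=1$ on $V$. This is where the $\CH$-structure enters: using the conjugation relation $w^{\deg}e^{xL_{1}}w^{-\deg}=e^{w^{-1}xL_{1}}$ from \eqref{eq:conj-deg-e^E} with $w=-z^{2}$ one gets $(-z^{2})^{\deg}e^{zL_{1}}=e^{-z^{-1}L_{1}}(-z^{2})^{\deg}$, whence
\[
A(z^{-1})A(z)=e^{z^{-1}L_{1}}(-z^{2})^{\deg}e^{zL_{1}}(-z^{-2})^{\deg}=e^{z^{-1}L_{1}}e^{-z^{-1}L_{1}}(-z^{2})^{\deg}(-z^{-2})^{\deg}=1,
\]
using \eqref{eq:eEeE-eE} and the fact that $(-z^{2})^{\deg}(-z^{-2})^{\deg}$ acts as the identity on each homogeneous component. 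This yields $\langle Y''(v,z)\iota(w),w'\rangle=\langle w',Y(v,z)w\rangle=\langle \iota(Y(v,z)w),w'\rangle$, so $\iota$ is a $V$-module homomorphism and $W\cong\iota(W)$ is a $V$-submodule of $(W')'$.

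For the second assertion, when $\dim W_n<\infty$ for all $n$ the canonical map $W_n\to (W_n^{*})^{*}$ is a linear isomorphism, so $\iota$ is a graded bijection and hence a $V$-module isomorphism $W\cong (W')'$. For the final assertion I would establish the orthogonal-complement correspondence: for a graded $V$-submodule $U\subseteq W$, its annihilator $U^{\perp}=\{w'\in W'\mid \langle w',U\rangle=0\}$ is a graded $V$-submodule of $W'$, because the defining formula expresses $\langle Y'(v,z)w',u\rangle$ in terms of $\langle w',Y(A(z)v,z^{-1})u\rangle$, where every coefficient of $Y(A(z)v,z^{-1})u$ lies in $U$ (as $U$ is a $V$-submodule), hence pairs to zero with $w'\in U^{\perp}$. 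Since $\dim W_n<\infty$, the assignment $U\mapsto U^{\perp}$ is an inclusion-reversing bijection between graded submodules of $W$ and of $W'$ carrying proper nonzero submodules to proper nonzero submodules; combined with $W\cong (W')'$ this gives the equivalence of irreducibility, exactly as in \cite{FHL}.

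The main obstacle I anticipate is the first assertion, specifically making the formal operators $A(z)=e^{zL_{1}}(-z^{-2})^{\deg}$ and the iterated pairing rigorous: one must check that $A(z)v$ lands in an appropriate space of formal series so that $Y'(A(z)v,z^{-1})$ and then $Y(A(z^{-1})A(z)v,z)$ are well defined, which relies on the lower truncation of the gradings and on $V$ being a \emph{weight} $\CH$-module so that $(-z^{-2})^{\deg}$ makes sense degreewise. The algebraic identity $A(z^{-1})A(z)=1$ is short once \eqref{eq:conj-deg-e^E} is in hand, but the careful bookkeeping of the grading conventions on $W'$ (so that $\iota$ is genuinely degree-preserving) and of the substitution $z\mapsto z^{-1}$ in the formal variables is the delicate part.
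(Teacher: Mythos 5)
Your proposal is correct and follows essentially the same route as the paper, which simply invokes the FHL argument (Propositions 5.3.1 and 5.3.2) ``with the same proof'': the double-dual identification via the operator identity $e^{z^{-1}L_{1}}(-z^{2})^{\deg}e^{zL_{1}}(-z^{-2})^{\deg}=1$ (a consequence of \eqref{eq:conj-deg-e^E} and \eqref{eq:eEeE-eE}), plus the orthogonal-complement correspondence for the irreducibility statement. Your verification that these formal manipulations survive in characteristic $p$ (finiteness of $e^{zL_{1}}v$ from lower truncation, degreewise sense of $(-z^{-2})^{\deg}$) is exactly the bookkeeping the paper's citation implicitly relies on.
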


On the other hand, we have (\cite[Proposition~2.8]{Li94}; cf. \cite{FHL}):

\begin{lemma}
Let $V$ be an $\CH$-module vertex algebra and
let $W$ be a lower truncated irreducible $\BZ$-graded $V$-module such that
 $\dim W_m<\infty$ for all $m\in\BZ$.
Then any nondegenerate bilinear form $(\cdot,\cdot)$ on $W$, satisfying the condition that
\begin{gather}
    (W_m,W_n)=0\quad\text{for  }m,n\in\BZ   \mbox{ with }m\ne n,  \label{eq:nondegenerate-module-1}\\
    (Y_W(v,z)w_1,w_2)=(w_1,Y_W(e^{zL_{1}}(-z^{-2})^{\deg}v,z^{-1})w_2) \quad\text{for }v\in V,\ w_1,w_2\in W,\label{eq:nondegenerate-module-2}
\end{gather}
is either symmetric or skew-symmetric.
\end{lemma}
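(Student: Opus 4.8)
The plan is to encode the bilinear form and its transpose as two $V$-module homomorphisms from $W$ to its restricted dual $W'$, and then to exploit irreducibility. Define linear maps $f,g\colon W\to W'$ by $\langle f(a),b\rangle=(a,b)$ and $\langle g(a),b\rangle=(b,a)$ for $a,b\in W$; the grading condition \eqref{eq:nondegenerate-module-1} guarantees that $f$ and $g$ are grade-preserving, so they indeed take values in $W'=\bigoplus_{n}W_n^{*}$, and nondegeneracy makes both injective, hence nonzero. First I would check that $f$ is a homomorphism: unraveling the defining formula for $Y'$ together with the invariance relation \eqref{eq:nondegenerate-module-2} gives
\begin{align*}
\langle f(Y_W(v,z)a),b\rangle&=(Y_W(v,z)a,b)=(a,Y_W(e^{zL_{1}}(-z^{-2})^{\deg}v,z^{-1})b)\\
&=\langle f(a),Y_W(e^{zL_{1}}(-z^{-2})^{\deg}v,z^{-1})b\rangle=\langle Y'(v,z)f(a),b\rangle,
\end{align*}
so $f\in\Hom_{V}(W,W')$.

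The crux is to show that $g$ is likewise a homomorphism. Here one must compare $(b,Y_W(v,z)a)$ with $\langle Y'(v,z)g(a),b\rangle=(Y_W(e^{zL_{1}}(-z^{-2})^{\deg}v,z^{-1})b,a)$. Applying the invariance relation \eqref{eq:nondegenerate-module-2} a \emph{second} time to this last expression rewrites it as $(b,Y_W(e^{z^{-1}L_{1}}(-z^{2})^{\deg}e^{zL_{1}}(-z^{-2})^{\deg}v,z)a)$, so the claim reduces to the operator identity
\begin{equation*}
e^{z^{-1}L_{1}}(-z^{2})^{\deg}e^{zL_{1}}(-z^{-2})^{\deg}=1
\end{equation*}
on $V$, i.e.\ that the conjugation operator appearing in the contragredient construction is an involution. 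I would verify this directly from the $\CH$-relations: the conjugation formula \eqref{eq:conj-deg-e^E} gives $(-z^{2})^{\deg}e^{zL_{1}}=e^{-z^{-1}L_{1}}(-z^{2})^{\deg}$, after which $e^{z^{-1}L_{1}}e^{-z^{-1}L_{1}}=1$ from \eqref{eq:eEeE-eE} and the evident identity $(-z^{2})^{\deg}(-z^{-2})^{\deg}=1$ on each homogeneous component finish the computation. This involution step is the main obstacle, since it is the only place where the precise shape of the $\CH$-action enters; everything else is formal. Granting it, the inner argument collapses to $v$ and we obtain $\langle g(Y_W(v,z)a),b\rangle=(b,Y_W(v,z)a)=\langle Y'(v,z)g(a),b\rangle$, proving $g\in\Hom_{V}(W,W')$.

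Finally I would conclude by a Schur-type argument. Each of $f$ and $g$ is injective by nondegeneracy, and its image is a nonzero $V$-submodule of $W'$, which is irreducible by the preceding proposition (using $\dim W_n<\infty$); hence $f$ and $g$ are isomorphisms. Thus $h=g^{-1}f$ is a grade-preserving element of $\End_{V}(W)$. Choosing $n$ with $W_n\ne 0$, the restriction $h|_{W_n}$ has an eigenvalue $\lambda\in\BF$ since $\BF$ is algebraically closed and $\dim W_n<\infty$; then $h-\lambda$ is a $V$-endomorphism with nonzero kernel, so irreducibility of $W$ forces $h=\lambda\cdot\mathrm{id}$, i.e.\ $f=\lambda g$. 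Translating back, $(a,b)=\lambda(b,a)$ for all $a,b\in W$, and applying this identity twice gives $(a,b)=\lambda^{2}(a,b)$, whence $\lambda^{2}=1$ by nondegeneracy. Therefore $\lambda=\pm1$, so the form is symmetric when $\lambda=1$ and skew-symmetric when $\lambda=-1$.
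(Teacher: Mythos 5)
Your proposal is correct and takes essentially the same route as the paper's own (cited) proof: the paper quotes this lemma from \cite[Proposition~2.8]{Li94} (cf.\ \cite{FHL}), where the argument is likewise to realize the form and its transpose as $V$-module homomorphisms $W\to W'$, verify that the conjugation operator $e^{zL_{1}}(-z^{-2})^{\deg}$ is an involution in the sense you state, and finish with irreducibility of $W'$ and a Schur-type eigenvalue argument giving $(a,b)=\lambda(b,a)$ with $\lambda^{2}=1$. Your verification of the involution identity from \eqref{eq:conj-deg-e^E} and \eqref{eq:eEeE-eE} is exactly the modular replacement for the $\Sl_2$-exponential computations used in characteristic zero, so there is no gap.
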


\begin{definition}\label{def-H-module}
Let $V$ be an $\CH$-module vertex algebra.
A \emph{$(V,\CH)$-module}  is a $\BZ$-graded weight $\CH$-module $W=\bigoplus_{n\in\BZ} W_n$
which is also a $\BZ$-graded $V$-module, satisfying
\begin{equation}
e^{zL_{-1}}Y_W(v,x)e^{-zL_{-1}}=Y_{W}(e^{zL_{-1}}v,x),
\end{equation}
\begin{equation}
    e^{zL_{1}}Y_W(v,z_0)e^{-zL_{1}}=Y_W\bigl(e^{z(1-zz_0)L_{1}}(1-zz_0)^{-2\deg}v,z_0/(1-zz_0)\bigr) \label{H-module-conj-E}
\end{equation}
for $v\in V$.
\end{definition}

It is clear that an $\CH$-module vertex algebra $V$ itself is a $(V,\CH)$-module.

\begin{lemma}\label{th:Hom(VV')=deg0}
Let $V$ be an $\CH$-module vertex algebra and  let $W$ be a $(V,\CH)$-module. Then
for every $f\in \Hom_{(V,\CH)}(V,W)$, we have $f(V_n)\subset W_n$ for $n\in\BZ$.
\end{lemma}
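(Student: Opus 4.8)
The plan is to exploit the fact that an element of $\Hom_{(V,\CH)}(V,W)$, being in particular an $\CH$-module homomorphism, commutes with every operator $L_0^{(i)}$, and to combine this with the generating-function description of the grading furnished by Remarks~\ref{1-x=deg} and \ref{-2deg=L0}. The crucial point, specific to the modular setting, is that a single eigenvalue $\binom{-2n}{i}$ need not determine $n$ in characteristic $p$, whereas the full collection $\{L_0^{(i)}\}_{i\ge 0}$, packaged as the series $(1+z)^{-2\deg}$, does. Note that the $V$-module structure of $f$ is not needed for this lemma; only the $\CH$-linearity is used.

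First I would fix $v\in V_n$ and record that, since $v$ is homogeneous of degree $n$, Remark~\ref{-2deg=L0} gives $(1+z)^{-2\deg}v=\sum_{i\ge 0}L_0^{(i)}v\,z^i=(1+z)^{-2n}v$ in $V[[z]]$. Applying $f$ and using $fL_0^{(i)}=L_0^{(i)}f$ together with $\BF$-linearity, I obtain
\begin{equation*}
\sum_{i\ge 0}L_0^{(i)}f(v)\,z^i=(1+z)^{-2n}f(v),
\end{equation*}
and then invoke Remark~\ref{-2deg=L0} once more on the left-hand side to rewrite it as $(1+z)^{-2\deg}f(v)$. Thus $(1+z)^{-2\deg}f(v)=(1+z)^{-2n}f(v)$.

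The final step is to deduce $f(v)\in W_n$ from this identity; this is the $(1+z)$-analogue of the homogeneity criterion in Remark~\ref{1-x=deg}. Decomposing $f(v)=\sum_m w_m$ into its finitely many homogeneous components $w_m\in W_m$ and comparing coefficients in $W[[z]]$ yields
\begin{equation*}
\bigl[(1+z)^{-2m}-(1+z)^{-2n}\bigr]w_m=0\quad\text{for every }m.
\end{equation*}
Here lies the one genuinely characteristic-$p$ point, which I expect to be the main (though mild) obstacle: I must verify that $(1+z)^{-2m}\ne(1+z)^{-2n}$ in $\BF[[z]]$ whenever $m\ne n$. This reduces to showing $(1+z)^c\ne 1$ for $c\ne 0$, which holds because the Frobenius relation $(1+z)^{p^k}=1+z^{p^k}$ exhibits $1+z$ as an element of infinite multiplicative order among the units of $\BF[[z]]$. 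Since $\BF[[z]]$ is an integral domain, the displayed equation forces $w_m=0$ for all $m\ne n$, whence $f(v)=w_n\in W_n$. As $v\in V_n$ was arbitrary, this proves $f(V_n)\subset W_n$.
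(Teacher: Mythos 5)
Your proof is correct, but it follows a genuinely different route from the paper's. The paper never touches the operators $L_0^{(i)}$: it uses the $\Sl_2$-exponential identity \eqref{eq:ED-deg-3} to write $(1-zz_0)^{-2\deg}$ as the product $e^{-z(1-zz_0)L_{1}}e^{-(1-zz_0)^{-1}z_0L_{-1}}e^{zL_{1}}e^{z_0L_{-1}}$, pulls $f$ through the four exponential factors by $\CH$-linearity to get $(1-zz_0)^{-2\deg}f(v)=(1-zz_0)^{-2n}f(v)$, and concludes by Remark \ref{1-x=deg}. You instead work only with the $\CH_0$-part of the structure: since $V$ and $W$ are both $\BZ$-graded weight $\CH$-modules (Definitions \ref{def-good} and \ref{def-H-module}), the series $\sum_{i}L_0^{(i)}(\cdot)\,z^i$ equals $(1+z)^{-2\deg}$ on each of them (Remark \ref{-2deg=L0}), and $f$ commutes with every $L_0^{(i)}$. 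Your version is shorter and isolates a more elementary fact: any linear map between $\BZ$-graded weight $\CH$-modules that commutes with the $L_0^{(i)}$ preserves the grading --- neither $L_{\pm1}$-equivariance nor the vertex operator map enters. What the paper's detour buys is economy elsewhere: identity \eqref{eq:ED-deg-3} is needed in any case for Proposition \ref{WH-invariant}, where one shows $W^{\CH}\subset W_0$ for a vector known only to be annihilated by the $L_{\pm1}^{(n)}$, $n\ge1$; there the $L_0$-action must be extracted from the $\Sl_2$-relations rather than read off from homogeneity, so your shortcut has no analogue. Two cosmetic remarks: the criterion you re-derive in your last step is precisely Remark \ref{1-x=deg} after the substitution $x\mapsto -z$, which the paper asserts without proof, so you could simply cite it; and if you do prove it, the cleanest form of your Frobenius argument is to write a nonzero integer as $|c|=p^kc'$ with $p\nmid c'$, so that $(1+z)^{|c|}=(1+z^{p^k})^{c'}$ has $z^{p^k}$-coefficient $c'\ne 0$ in $\BF$, whence $(1+z)^c\ne 1$.
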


\begin{proof}  Suppose $f\in \Hom_{(V,\CH)}(V,W)$. Let $v \in V_{n}$ with $n\in \BZ$.
Using \eqref{eq:ED-deg-3}  we get
\begin{align*}
 (1-zz_0)^{-2\deg}f(v)&= e^{-z(1-zz_0)L_{1}}e^{-(1-zz_0)^{-1}z_0L_{-1}}e^{zL_{1}}e^{z_0L_{-1}}f(v) \\
    &= f(e^{-z(1-zz_0)L_{1}}e^{-(1-zz_0)^{-1}z_0L_{-1}}e^{zL_{1}}e^{z_0L_{-1}}v) \\
  &= f((1-zz_0)^{-2\deg}v)\\
    &=  (1-zz_0)^{-2n}f(v),
\end{align*}
which implies $f(v)\in W_{n}$ by Remark \ref{1-x=deg}.
Thus $f(V_n)\subset W_n$ for all $n\in\BZ$.
\end{proof}

In the following, we shall prove that if $W$ is a lower truncated $(V,\CH)$-module, then $W'$ is a $(V,\CH)$-module.
Recall from Remark \ref{anti-automorphism} the anti-automorphism  $\theta$ of $\CH$.
For any $\CH$-module $W$, we define an $\CH$-module structure on $W^{*}$ by
\begin{equation*}
\langle af,w\rangle=\langle f,\theta(a)w\rangle
\end{equation*}
for $a\in \CH,\ f\in W^{*},\ w\in W$.
We have:

\begin{lemma}\label{th:eE-on-V'}
Let $V$ be an $\CH$-module vertex algebra and let $(W,Y_{W})$ be a lower truncated $(V,\CH)$-module.
Then $W'$ is an $\CH$-submodule of $W^{*}$ and furthermore,  $W'$ is a $(V,\CH)$-module.
\end{lemma}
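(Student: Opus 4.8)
The plan is to treat the two assertions in turn: first that $W'=\bigoplus_{n}W_n^*$ is closed under the $\CH$-action on $W^*$ (and is in fact a $\BZ$-graded weight $\CH$-module with $(W')_n=W_n^*$), and then that the two conjugation relations of Definition~\ref{def-H-module} hold for the pair $(W',Y')$, the underlying $V$-module structure being already supplied by Proposition~\ref{th:W'-is-graded-module}.

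For the first part, since $\CH$ is generated as an algebra by the elements $L_{\pm1}^{(r)}$ and $L_0^{(r)}$ with $r\in\BN$, it suffices to check that each such generator carries $W_n^*$ into $W'$. I would fix $f\in W_n^*$ and a homogeneous $w\in W$. Using $\theta(L_{-1}^{(r)})=L_1^{(r)}$ so that $\langle L_{-1}^{(r)}f,w\rangle=\langle f,L_1^{(r)}w\rangle$, together with $L_1^{(r)}w\in W_{\deg w-r}$, one finds $L_{-1}^{(r)}f\in W_{n+r}^*$; symmetrically $L_1^{(r)}f\in W_{n-r}^*$. Since $\theta(L_0^{(r)})=L_0^{(r)}$ and $L_0^{(r)}$ acts on $W_{\deg w}$ as the scalar $\binom{-2\deg w}{r}$, one gets $\langle L_0^{(r)}f,w\rangle=\binom{-2n}{r}\langle f,w\rangle$, hence $L_0^{(r)}f=\binom{-2n}{r}f\in W_n^*$. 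This simultaneously shows that $W'$ is an $\CH$-submodule, that the grading $(W')_n=W_n^*$ is preserved (the degree shifts $\mp r$ are exactly right), and that $W'$ is a weight module; lower truncation of $W'$ is inherited from that of $W$.

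For the conjugation relations the strategy is to pair both sides against an arbitrary $w\in W$ and to push the $\CH$-exponentials off $W'$ and onto $W$ by $\theta$-duality, then invoke the corresponding relations already known on $W$. For the $L_{-1}$-relation, using $\theta(e^{\pm zL_{-1}})=e^{\pm zL_1}$ and the defining formula for $Y'$, the left side $\langle e^{zL_{-1}}Y'(v,x)e^{-zL_{-1}}f,w\rangle$ rewrites as $\langle f,\,e^{-zL_1}Y(e^{xL_1}(-x^{-2})^{\deg}v,x^{-1})e^{zL_1}w\rangle$. Applying \eqref{H-module-conj-E} on $W$ with $z\mapsto-z$ collapses the conjugation and shifts the variable $x^{-1}$ to $(x+z)^{-1}$, while on the other side I would use $e^{zL_{-1}}v=Y(v,z)\1$ and the translation property $Y'(e^{zL_{-1}}v,x)=Y'(v,x+z)$ to bring the right side to the same variable $(x+z)^{-1}$. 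What then remains is a purely algebraic identity in $\CH$ between the two vector arguments, namely $e^{(x+z)L_1}(-(x+z)^{-2})^{\deg}v=e^{-z(1+zx^{-1})L_1}(1+zx^{-1})^{-2\deg}e^{xL_1}(-x^{-2})^{\deg}v$, which I would verify by moving $(1+zx^{-1})^{-2\deg}$ past $e^{xL_1}$ via \eqref{eq:conj-deg-e^E}, after which the $L_1$-exponents combine to $(x+z)L_1$ and the degree scalars to $(-(x+z)^{-2})^{\deg}$.

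The $L_1$-relation is handled the same way, now using $\theta(e^{\pm zL_1})=e^{\pm zL_{-1}}$ to transfer to $W$ and invoking the $L_{-1}$-relation on $W$ (equivalently the translation $Y(v,x-z)$). The variables match through the identity $z_0^{-1}-z=(z_0/(1-zz_0))^{-1}$, and the residual $\CH$-identity $e^{z_0L_1}(-z_0^{-2})^{\deg}v=e^{z_0'L_1}(-z_0'^{-2})^{\deg}e^{z(1-zz_0)L_1}(1-zz_0)^{-2\deg}v$ with $z_0'=z_0/(1-zz_0)$ reduces, again by \eqref{eq:conj-deg-e^E}, to the collapse of the $L_1$-exponents to $z_0L_1$ and of the degree scalars to $(-z_0^{-2})^{\deg}$. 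I expect the main obstacle to be precisely these two formal $\CH$-identities: the bookkeeping of nested exponentials and fractional-variable degree operators is delicate, and one must confirm that the substitution rule \eqref{eq:conj-deg-e^E} together with the binomial identities underlying $\CH$ remain valid over $\BF$; everything else is routine $\theta$-duality.
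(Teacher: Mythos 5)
Your proposal is correct and follows essentially the same route as the paper's proof: the $\CH$-submodule and grading claims are the same (the paper merely asserts them as "readily seen"), and your treatment of the two conjugation relations---pairing against $w\in W$, transferring exponentials via $\theta$-duality, invoking the relations already holding on $W$ together with the definition of $Y'$, and collapsing the residual $\CH$-identities via \eqref{eq:conj-deg-e^E}---is exactly the paper's computation. The two formal identities you isolate (exponents combining to $(x+z)L_{1}$, resp.\ $z_{0}L_{1}$, with matching degree scalars) are precisely the steps carried out in the paper's displayed calculations, so there is no gap.
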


\begin{proof} Since $W$ is a lower truncated $\BZ$-graded $V$-module,
it follows from Proposition~\ref{th:W'-is-graded-module} that $W'$ is a $\BZ$-graded $V$-module.
On the other hand, as $W$ is an $\CH$-module, $W^{*}$ is an $\CH$-module.
It can be readily seen that $W'$ is an $\CH$-submodule of $W^{*}$.
In particular, $W'$ is an $\CH$-module. It is clear that
$W'$ is a $\BZ$-graded $\CH$-module.

Let $w'\in W',\  a\in V,\ w\in W$.  Using  \eqref{V,B-module-property} and \eqref{eq:ED-deg-2}, we have
\begin{align*}
    \langle Y'_{W}(e^{xL_{-1}}a,z)w',w\rangle
    &=\langle Y'_{W}(a,z+x)w',w\rangle\\
  &=\langle w',Y_{W}(e^{(z+x)L_{1}}(z+x)^{-2\deg}(-1)^{\deg}a,(z+x)^{-1})w\rangle.
\end{align*}
On the other hand, using \eqref{condition-conj-E} and \eqref{eq:conj-deg-e^E} we get
\begin{align*}
    &\phantom{=}\;\;\langle e^{xL_{-1}}Y'_{W}(a,z)e^{-xL_{-1}}w',w\rangle\\
    &=\langle w',e^{-xL_{1}}Y_{W}(e^{zL_{1}}(-z^{-2})^{\deg}a,z^{-1})e^{xL_{1}}w\rangle\\
    &=\langle w', Y_{W}(e^{-x(1+z^{-1}x)L_{1}}(1+z^{-1}x)^{-2\deg}e^{zL_{1}}(-z^{-2})^{\deg}a,z^{-1}(1+z^{-1}x)^{-1})w\rangle\\
    &=\langle w', Y_{W}(e^{-x(1+z^{-1}x)L_{1}}e^{(1+z^{-1}x)^2zL_{1}}(1+z^{-1}x)^{-2\deg}(-z^{-2})^{\deg}a,(z+x)^{-1})w\rangle\\
    &=\langle w', Y_{W}(e^{(z+x)L_{1}}(z+x)^{-2\deg}(-1)^{\deg}a,(z+x)^{-1})w\rangle.
\end{align*}
Consequently, $Y_{W}'(e^{xL_{-1}}a,z)=e^{xL_{-1}}Y_{W}'(a,z)e^{-xL_{-1}}$ on $W'$.
Using \eqref{eq:conj-deg-e^E}, we have
\begin{align*}
    &\phantom{=}\;\;\langle Y_{W}'\bigl(e^{z(1-zz_0)L_{1}}(1-zz_0)^{-2\deg}a,z_0/(1-zz_0)\bigr)w',w\rangle\\
    &=\langle w',Y_{W}\bigl(e^{(1-zz_0)^{-1}z_0L_{1}}(-(1-zz_0)^2 z_0^{-2})^{\deg}e^{z(1-zz_0)L_{1}}(1-zz_0)^{-2\deg}a,z_0^{-1}(1-zz_0)\bigr)w\rangle\\
    &=\langle w',Y_{W}\bigl(e^{(1-zz_0)^{-1}z_0L_{1}}e^{-(1-zz_0)^{-1} z_0^2zL_{1}}(-(1-zz_0)^2 z_0^{-2})^{\deg}(1-zz_0)^{-2\deg}a,z_0^{-1}-z)\bigr)w\rangle\\
    &=\langle w',Y_{W}\bigl(e^{z_0L_{1}}(-z_0^{-2})^{\deg}a,z_0^{-1}-z\bigr)w\rangle.
\end{align*}
On the other hand, using \eqref{V,B-module-property}, we get
\begin{align*}
    \langle e^{zL_{1}}Y'_{W}(a,z_0)e^{-zL_{1}}w',w\rangle
    &=\langle w',e^{-zL_{-1}}Y_{W}(e^{z_0L_{1}}(-z_0^{-2})^{\deg}a,z_0^{-1})e^{zL_{-1}}w\rangle\\
    &=\langle w', Y_{W}(e^{z_0L_{1}}(-z_0^{-2})^{\deg}a,z_0^{-1}-z) w\rangle.
\end{align*}
Thus \eqref{H-module-conj-E} holds.
Therefore, $W'$ is a $(V,\CH)$-module.
\end{proof}

\section{Invariant bilinear forms on $\CH$-module vertex algebras}

In this section, we study invariant bilinear forms on $\CH$-module vertex algebras.
The main goal is to establish suitable modular analogues of the main results in characteristic $0$ (see \cite{Li94}).

\begin{definition}\label{defi-invarant-bilinear-form}
Let $V$ be an $\CH$-module vertex algebra and let $W$ be a lower truncated $(V,\CH)$-module.
A bilinear form $(\cdot,\cdot)$ on $W$  is said to be \emph{invariant} if
\begin{gather}
(Y(v,z)w,w')=(w,Y(e^{zL_{1}}(-z^{-2})^{\deg}v,z^{-1})w'), \label{condition-BF-invariant}\\
(aw,w')=(w,\theta(a)w')\label{condition-module-H-invariant}
\end{gather}
for $a\in \CH,\ v\in V,\ w,w'\in W$.
\end{definition}

\begin{remark}
Note that since $\CH$ as an algebra is generated by $L_{1}^{(n)}$ and $L_{-1}^{(n)}$ for $n\in \BN$,
 the condition (\ref{condition-module-H-invariant})  in Definition \ref{defi-invarant-bilinear-form}
is equivalent to the following condition:
\begin{gather}
(e^{xL_{\pm1}}w,w')=(w,e^{xL_{\mp 1}}w')\label{condition-module-DE-invariant}
\end{gather}
for $w,w'\in W$.
\end{remark}

\begin{lemma}\label{form-on-module}
Let $V$ be an $\CH$-module vertex algebra and let $W$ be a lower truncated $(V,\CH)$-module.
Assume that $(\cdot,\cdot)$ is an invariant bilinear form on $W$. Then
$ (W_m,W_n)=0$ for $m,n\in \BZ$ with $m\ne n$.
\end{lemma}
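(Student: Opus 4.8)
The plan is to extract scalar constraints from the $\CH$-invariance \eqref{condition-module-H-invariant} and then show those constraints force the two degrees to coincide. First I would fix homogeneous vectors $w\in W_m$ and $w'\in W_n$ and specialize \eqref{condition-module-H-invariant} to $a=L_0^{(r)}$. Since $\theta(L_0^{(r)})=L_0^{(r)}$ by Remark~\ref{anti-automorphism}, and $L_0^{(r)}$ acts on $W_k$ as the scalar $\binom{-2k}{r}$ by Definition~\ref{def-graded-H-module}, this yields the scalar identity $\binom{-2m}{r}(w,w')=\binom{-2n}{r}(w,w')$ in $\BF$ for every $r\in\BN$.

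Next, arguing by contradiction, I would suppose $(W_m,W_n)\ne 0$ for some $m\ne n$ and choose homogeneous $w\in W_m$, $w'\in W_n$ with $(w,w')\ne 0$. The identity above then forces $\binom{-2m}{r}=\binom{-2n}{r}$ in $\BF$ for all $r\in\BN$. I would package these equalities into a single generating-function identity: by Remark~\ref{-2deg=L0} one has $\sum_{r\ge 0}\binom{-2m}{r}z^r=(1+z)^{-2m}$ and likewise for $n$, so the whole family of constraints is equivalent to $(1+z)^{-2m}=(1+z)^{-2n}$ in $\BF[[z]]$, i.e. to $(1+z)^{2(n-m)}=1$.

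The crux, and the one genuinely characteristic-$p$ step, is to show that $(1+z)^{k}=1$ in $\BF[[z]]$ forces $k=0$; here I would emphasize that the single equation coming from $r=1$ only yields $-2m\equiv -2n\pmod p$, hence $m\equiv n\pmod p$, so the full family of identities is essential. Writing $k=2(n-m)$ and assuming $k\ne 0$, I would reduce to the case $k>0$ (inverting if $k<0$), write $k=p^{a}t$ with $p\nmid t$, and use $(1+z)^{p^{a}}=1+z^{p^{a}}$ to compute $(1+z)^{k}=(1+z^{p^{a}})^{t}=1+t\,z^{p^{a}}+\cdots$, whose coefficient of $z^{p^{a}}$ equals $t\ne 0$ in $\BF$. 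This contradicts $(1+z)^{k}=1$, so $k=0$ and hence $m=n$, contrary to $m\ne n$. Therefore $(W_m,W_n)=0$ whenever $m\ne n$. The only delicate point is this final $p$-adic divisibility argument, but it is elementary once the scalar constraints have been assembled into the generating function.
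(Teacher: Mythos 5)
Your proof is correct and takes essentially the same route as the paper's: both extract the scalar identities $\binom{-2m}{r}(w,w')=\binom{-2n}{r}(w,w')$ by applying invariance \eqref{condition-module-H-invariant} to $a=L_0^{(r)}$ (the paper phrases this via the generating function $(1-x)^{-2\deg}$ from Remark~\ref{-2deg=L0}), and then conclude $m=n$ from the resulting power-series identity. The only difference is that you write out the characteristic-$p$ step showing $(1+z)^{k}=1$ forces $k=0$ via $(1+z)^{p^{a}t}=(1+z^{p^{a}})^{t}$, a detail the paper leaves implicit by appealing to Remark~\ref{1-x=deg}.
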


\begin{proof} Suppose that $(u,v)\ne 0$ for some $u\in W_{m},\ v\in W_{n}$ with $m,n\in \BZ$.
Using invariance, we have
\begin{align*}
 (1-x)^{-2\deg u}(u,v)=  ((1-x)^{-2\deg}u,v)=(u,(1-x)^{-2\deg}v)= (1-x)^{-2\deg v}(u,v),
\end{align*}
recalling (\ref{deg=L0}).
As $(u,v)\ne 0$, we have  $(1-x)^{-2m}=(1-x)^{-2n}$ in $\BF((x))$, which implies  $m=n$ in $\BZ$.
Therefore, we have $ (V_m,V_n)=0$ for $m,n\in \BZ$ with $m\ne n$.
\end{proof}

Proposition 5.3.6 of \cite{FHL} is still valid here (which follows from the same proof).

\begin{proposition}
Every invariant bilinear form on an $\CH$-module vertex algebra is symmetric.
\end{proposition}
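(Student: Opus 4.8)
The plan is to adapt the Frenkel--Huang--Lepowsky argument, taking $W=V$ with the adjoint action and proving $(u,v)=(v,u)$ for homogeneous $u,v$. By Lemma~\ref{form-on-module} the form pairs $V_m$ with $V_n$ trivially unless $m=n$, so it suffices to treat $u,v\in V_m$ for a fixed $m\in\BZ$; the lower-truncation hypothesis guarantees that the relevant degrees are bounded below, which is what will make a degree induction well founded.

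First I would evaluate the invariance relation \eqref{condition-BF-invariant} with the vacuum in the second slot. Using the creation property $Y(x,z^{-1})\1=e^{z^{-1}L_{-1}}x$ (valid since $L_{-1}^{(n)}=\D^{(n)}$), this yields $(Y(u,z)v,\1)=(v,T(z)u)$, where I abbreviate $T(z)=e^{z^{-1}L_{-1}}e^{zL_{1}}(-z^{-2})^{\deg}$. Next I would apply skew-symmetry $Y(v,z)u=e^{zL_{-1}}Y(u,-z)v$ together with the $\CH$-invariance \eqref{condition-module-DE-invariant} and the identity $e^{zL_{1}}\1=\1$ (from condition~\eqref{condition-delta1}) to obtain $(Y(v,z)u,\1)=(Y(u,-z)v,\1)$. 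Combining these identities (and the one gotten by swapping $u,v$ and replacing $z$ by $-z$) gives the key relation $(u,T(z)v)=(v,T(-z)u)$.

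The decisive step is a grading computation. For $v\in V_m$ one has $(-z^{-2})^{\deg}v=(-1)^m z^{-2m}v$, and since $L_{1}^{(k)}$ lowers degree by $k$ while $L_{-1}^{(j)}$ raises it by $j$, pairing $u\in V_m$ against $T(z)v$ forces $j=k$, so that all powers of $z$ cancel and $(u,T(z)v)=(-1)^m z^{-2m}\sum_{k\ge 0}(u,L_{-1}^{(k)}L_{1}^{(k)}v)$, a finite sum by lower truncation. Doing the same for $(v,T(-z)u)$ and equating the coefficients of $z^{-2m}$, then rewriting each term via the $\CH$-invariance $(u,L_{-1}^{(k)}L_{1}^{(k)}v)=(L_{1}^{(k)}u,L_{1}^{(k)}v)$ (since $\theta(L_{1}^{(k)})=L_{-1}^{(k)}$), I arrive at $\sum_{k\ge 0}(L_{1}^{(k)}u,L_{1}^{(k)}v)=\sum_{k\ge 0}(L_{1}^{(k)}v,L_{1}^{(k)}u)$.

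Finally, setting $S(u,v)=(u,v)-(v,u)$, this reads $S(u,v)=-\sum_{k\ge 1}S(L_{1}^{(k)}u,L_{1}^{(k)}v)$, and since $L_{1}^{(k)}u,L_{1}^{(k)}v\in V_{m-k}$ for $k\ge 1$, an induction on the degree $m$ (well founded by lower truncation, with the lowest nonzero degree furnishing a vacuous base case) forces $S\equiv 0$, which together with the off-diagonal vanishing is exactly symmetry. I expect the main obstacle to be the bookkeeping in the grading step: one must verify that the seemingly complicated operator identity $(u,T(z)v)=(v,T(-z)u)$ genuinely collapses, through the constraint $j=k$, to a relation among the equal-degree components alone, after which the antisymmetric recursion and the induction are routine.
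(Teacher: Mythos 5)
Your proof is correct and takes essentially the same approach as the paper: the paper establishes this proposition simply by invoking Proposition 5.3.6 of \cite{FHL}, noting that the same proof remains valid in characteristic $p$, and your argument---invariance against the vacuum plus skew-symmetry to obtain the relation $(u,T(z)v)=(v,T(-z)u)$, the grading constraint collapsing the double sum to the diagonal $j=k$, and the induction on degree made well founded by lower truncation---is precisely that FHL argument transcribed into the modular setting with divided powers $L_{\pm 1}^{(k)}$ in place of $L(\pm 1)^{k}/k!$.
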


We also have the following slightly different result (cf. Lemma \ref{form-on-module}):

\begin{lemma}
Let $V$ be an $\CH$-module vertex algebra such that
$V_n=0$ for $n<0$. Suppose that $(\cdot,\cdot)$ is a bilinear form on $V$ such that
invariance \eqref{condition-BF-invariant} holds
and $(V_m,V_n)=0$ for $m,n\in \BZ$ with $m\ne n$. Then $(\cdot,\cdot)$ is symmetric.
\end{lemma}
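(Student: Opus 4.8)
The plan is to reduce symmetry to the diagonal and then express $(u,v)$ in two different ways as a finite sum of vacuum pairings over $V_0$, where a symmetry is available for free. Because $(V_m,V_n)=0$ for $m\neq n$ is assumed, symmetry is automatic off the diagonal, so it suffices to prove $(u,v)=(v,u)$ for $u,v\in V_n$ with a single fixed $n$. Since $V_k=0$ for $k<0$, we have $L_1^{(j)}u=0$ for $j>n$, so every sum below is finite. The one tool I need is that pairing against $\1$ annihilates positive degree: if $w$ is homogeneous then $L_{-1}^{(k)}w=\D^{(k)}w$ has degree $\deg w+k>0$ for $k\ge 1$, while $\1\in V_0$, so $(\1,L_{-1}^{(k)}w)=0=(L_{-1}^{(k)}w,\1)$ for $k\ge 1$. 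Combined with the creation property \eqref{special-skewsymm}, i.e. $Y(u,z)\1=e^{zL_{-1}}u$, and the off-diagonal vanishing, this shows that both $(Y(u,z)\1,v)$ and $(u,Y(v,z)\1)$ are independent of $z$ and equal $(u,v)$.

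First I would feed $(Y(u,z)\1,v)=(u,v)$ into the invariance \eqref{condition-BF-invariant} to obtain $(u,v)=(\1,Y(e^{zL_1}(-z^{-2})^{\deg}u,z^{-1})v)$. Writing $e^{zL_1}(-z^{-2})^{\deg}u=(-1)^n\sum_{j\ge 0}z^{j-2n}L_1^{(j)}u$ and retaining only the degree-zero contributions to the vacuum pairing fixes the mode index and cancels all powers of $z$, giving
\[ (u,v)=(-1)^n\sum_{j\ge 0}\bigl(\1,(L_1^{(j)}u)_{2n-j-1}v\bigr). \]
Specializing to $v=\1$ (so that $n=0$ and only the $j=0$ term survives) yields the key $V_0$-symmetry $(w,\1)=(\1,w)$ for all $w\in V_0$.

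To finish I would produce a companion formula by reversing the invariance. The adjoint operation $A_z\colon v\mapsto e^{zL_1}(-z^{-2})^{\deg}v$ satisfies $A_{z^{-1}}A_z=\mathrm{id}$, as follows from \eqref{eq:conj-deg-e^E} together with $e^{zL_1}e^{-zL_1}=1$ in \eqref{eq:eEeE-eE}; substituting $v\mapsto A_zb$ and $z\mapsto z^{-1}$ into \eqref{condition-BF-invariant} then gives the reverse form $(Y(A_zb,z^{-1})w,w')=(w,Y(b,z)w')$. Applying this to $(u,Y(v,z)\1)=(u,v)$ and extracting degree-zero terms as before yields
\[ (u,v)=(-1)^n\sum_{j\ge 0}\bigl((L_1^{(j)}v)_{2n-j-1}u,\1\bigr). \]
Each vector $(L_1^{(j)}v)_{2n-j-1}u$ lies in $V_0$, so the $V_0$-symmetry replaces every $(\,\cdot\,,\1)$ by $(\1,\,\cdot\,)$, turning the right-hand side into the first formula with $u$ and $v$ interchanged, namely $(v,u)$. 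Hence $(u,v)=(v,u)$, and with the off-diagonal vanishing this proves that $(\cdot,\cdot)$ is symmetric.

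The main obstacle I anticipate is the reverse-invariance step: verifying the involution $A_{z^{-1}}A_z=\mathrm{id}$ and justifying the substitution $z\mapsto z^{-1}$ in the formal-variable identity \eqref{condition-BF-invariant}, together with the careful degree bookkeeping that collapses each two-variable vacuum pairing to a single power of $z$. Everything else is forced by the grading.
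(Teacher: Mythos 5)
Your proof is correct and follows essentially the same route as the paper: both first establish the vacuum symmetry $(w,\1)=(\1,w)$ on $V_0$ using that $L_{1}^{(j)}$ annihilates $V_0$ (a consequence of $V_n=0$ for $n<0$), and then apply invariance twice --- once forward and once in reverse via the involution $A_{z^{-1}}A_z=\mathrm{id}$ --- to move $u$ and $v$ through the vacuum. The only difference is presentational: you extract degree-zero components mode by mode, while the paper takes $\Res_x x^{-1}$ of the corresponding generating-function identities, which is the same computation.
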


\begin{proof}
Let $a\in V_0$. Since $L_{1}^{(i)}V_n\subset V_{n-i}$ for $i\in\BN,\  n\in\BZ$,
we have $L_{1}^{(i)}a=0$ for $i\ge 1$.
Hence
\begin{align*}
    (Y(a,x)\1,\1)
    =(\1,Y(e^{xL_{1}}(-x^{-2})^{\deg}a,x^{-1})\1)
    =(\1,Y(a,x^{-1})\1).
\end{align*}
Extracting the constant terms from both sides we get  $(a,\1)=(\1,a)$.
As $(\1,V_{n})=0=(V_{n},\1)$ for $n\ne 0$, we get $(v,\1)=(\1,v)$ for all $v\in V$.
Therefore, for $u,v\in V$, we have
\begin{align*}
    (u,v)
    &=\Res_x x^{-1} ( Y( u,x)\1,v)\\
    &=\Res_x x^{-1} (\1,Y(e^{xL_{1}}(-x^{-2})^{\deg} u,x^{-1})v)\\
    &=\Res_x x^{-1} (Y(e^{xL_{1}}(-x^{-2})^{\deg} u,x^{-1})v,\1)\\
    &=\Res_x x^{-1} (v,Y(u,x)\1)\\
    &=(v,u),
\end{align*}
as desired.
\end{proof}

Notice that if $V=\bigoplus_{n\in \BZ}V_{n}$ is a $\BZ$-graded vector space, then a bilinear form $(\cdot,\cdot)$ on $V$ such that
$(V_{m},V_{n})=0$ for $m,n\in \BZ$ with $m\ne n$ amounts to a grading-preserving linear map from $V$ to $V'$.
With this identification, we have the following result (cf. \cite{Li94}):

\begin{proposition}\label{th:space-BF=Hom(VV')}
Let $V$ be an $\CH$-module vertex algebra.
Then the space of invariant bilinear forms on $V$ is canonically isomorphic to $\Hom_{(V,\CH)}(V,V')$.
\end{proposition}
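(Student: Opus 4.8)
The plan is to produce an explicit, mutually inverse pair of linear maps between the space of invariant bilinear forms on $V$ and $\Hom_{(V,\CH)}(V,V')$, and then to check that the two halves of the invariance condition match, term by term, the requirements of being a $V$-module homomorphism and an $\CH$-module homomorphism. Recall from the Remark preceding the statement that a bilinear form $(\cdot,\cdot)$ with $(V_m,V_n)=0$ for $m\ne n$ is the same datum as a grading-preserving linear map $\psi:V\to V'$, via $\langle\psi(u),v\rangle=(u,v)$: for homogeneous $u\in V_m$ the functional $\psi(u)$ vanishes on $V_n$ whenever $n\ne m$, so it lands in $V_m^*\subset V'$, not merely in $V^*$. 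So the first step is to record that $(\cdot,\cdot)\mapsto\psi$ is a linear bijection between the bilinear forms with $(V_m,V_n)=0$ for $m\ne n$ and the grading-preserving linear maps $V\to V'$, and then to show that invariance of the form is equivalent to $\psi\in\Hom_{(V,\CH)}(V,V')$.

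Next I would unwind the two conditions. Pairing the identity $\psi(Y(v,z)u)=Y'(v,z)\psi(u)$ against an arbitrary $w\in V$ and inserting the defining formula for the contragredient action, $\langle Y'(v,z)w',w\rangle=\langle w',Y(e^{zL_{1}}(-z^{-2})^{\deg}v,z^{-1})w\rangle$, converts the statement ``$\psi$ is a $V$-module homomorphism'' into precisely $(Y(v,z)u,w)=(u,Y(e^{zL_{1}}(-z^{-2})^{\deg}v,z^{-1})w)$, which is condition \eqref{condition-BF-invariant}. In the same way, pairing $\psi(au)=a\psi(u)$ against $w$ and using the $\theta$-twisted $\CH$-action on $W^*$, namely $\langle af,w\rangle=\langle f,\theta(a)w\rangle$, converts ``$\psi$ is an $\CH$-module homomorphism'' into $(au,w)=(u,\theta(a)w)$, which is condition \eqref{condition-module-H-invariant}. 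Thus the two defining equations of an invariant form are literally the $V$-linearity and $\CH$-linearity of $\psi$.

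Finally I would close the loop on well-definedness in both directions. Starting from an invariant form, Lemma \ref{form-on-module} supplies $(V_m,V_n)=0$ for $m\ne n$ (this uses only the $L_0$ part of \eqref{condition-module-H-invariant}, through \eqref{deg=L0}), so $\psi$ is defined and grading preserving, and by the previous paragraph $\psi\in\Hom_{(V,\CH)}(V,V')$. Conversely, starting from $\psi\in\Hom_{(V,\CH)}(V,V')$, Lemma \ref{th:Hom(VV')=deg0} gives $\psi(V_n)\subset V_n^*$, so the associated form satisfies $(V_m,V_n)=0$ for $m\ne n$, and the two invariance conditions hold by the matching above. Since the two constructions are manifestly linear and inverse to one another, this yields the asserted canonical isomorphism. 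I do not anticipate a genuine obstacle: the whole argument is bookkeeping against the definitions of $Y'$ and of the $\theta$-twisted action. The one point demanding care is the well-definedness of $\psi$ as a map into the \emph{restricted} dual $V'$ rather than the full dual $V^*$, which is exactly where the grading orthogonality — and hence Lemma \ref{form-on-module} and Lemma \ref{th:Hom(VV')=deg0} — is needed.
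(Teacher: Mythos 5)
Your proof is correct and takes essentially the same route as the paper's: both identify a bilinear form with a linear map $\Psi:V\to V'$ via pairing in the first slot, and observe that the two invariance axioms \eqref{condition-BF-invariant} and \eqref{condition-module-H-invariant} translate literally into $V$-linearity (through the defining formula for $Y'$) and $\CH$-linearity (through the $\theta$-twisted action on $V^*$). Your explicit attention to why the map lands in $V'$ rather than $V^{*}$ — invoking Lemma \ref{form-on-module} in one direction and Lemma \ref{th:Hom(VV')=deg0} in the other — is the same point the paper disposes of more tersely via the identification remark stated just before the proposition.
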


\begin{proof}
Let $B(\cdot,\cdot)$ be a bilinear form on $V$. Define $\Psi_{B}:V\to V'$ by $\Psi_{B}(v)(u)=B(v,u)$ for $u,v\in V$.
Then $B(\cdot,\cdot)$ satisfies invariance (\ref{condition-BF-invariant})  if and only if $\Psi_{B}\in \Hom_V(V,V')$.
For $a\in \CH,\ u,v\in V$, we have
\begin{align*}
    \Psi_{B}(av)(u)=B(av,u)=B(v,\theta(a)u)=\langle \Psi_{B}(v),\theta(a)u\rangle
  =\langle a\Psi_{B}(v),u\rangle=  a\Psi_{B}(v)(u).
\end{align*}
This proves $\Psi_{B}\in\Hom_{(V,\CH)}(V,V')$.

On the other hand, let $f\in \Hom_{(V,\CH)}(V,V')$.
Define a bilinear form $(\cdot,\cdot)_{f}$ on $V$ by $(u,v)_f=\langle f(u),v\rangle$ for $u,v\in V$.
Then
\begin{align*}
    (Y(u,z)v,w)_f&=\langle fY(u,z)v,w\rangle=\langle Y(u,z)f(v),w\rangle
    =\langle f(v),Y(e^{zL_{1}}(-z^{-2})^{\deg}u,z^{-1})w\rangle\\
    &=(v,Y(e^{zL_{1}}(-z^{-2})^{\deg}u,z^{-1})w)_f.
\end{align*}
Furthermore,  we have
\begin{gather*}
    (au,v)_f=\langle f(au),v\rangle=\langle af(u),v\rangle
    =\langle f(u),\theta(a)v\rangle=(u,\theta(a)v)_f
    \end{gather*}
for $a\in \CH$.
Therefore $(\cdot,\cdot)_f$ is an invariant bilinear form. This concludes the proof.
\end{proof}

Furthermore, we have:

\begin{proposition}\label{WH-invariant}
Let $V$ be an $\CH$-module vertex algebra and let $W$ be a $(V,\CH)$-module.
Set
\begin{eqnarray}
W^{\CH}=\{ w\in W \ | \  L_{1}^{(n)}w=0=L_{-1}^{(n)}w\   \  \mbox{ for }n\ge 1\}.
\end{eqnarray}
Then  the assignment $f\mapsto f(\1)$
 is a linear isomorphism from $\Hom_{(V,\CH)}(V,W)$ onto $W^\CH$. Furthermore,
 we have $W^\CH\subset W_0$.
\end{proposition}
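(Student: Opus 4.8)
The plan is to construct the isomorphism in two halves: first realize $\Hom_{(V,\CH)}(V,W)$ as a subset of $\Hom_{(V,\B)}(V,W)$ and invoke the already-established $\B$-level isomorphism $\phi(f)=f(\1)$ of Lemma~\ref{th:f(1)-in-W^D}, then pin down which elements of $W^{\D}$ actually lie in $W^{\CH}$. Concretely, I would first observe that $\CH$-module homomorphisms are in particular $\B$-module homomorphisms (since $\B\simeq\CH_{-}$ is a subalgebra acting as $L_{-1}^{(n)}=\D^{(n)}$), so the assignment $f\mapsto f(\1)$ is already well-defined and injective by Lemma~\ref{th:f(1)-in-W^D}; the content is therefore surjectivity together with the identification of the image as $W^{\CH}$.

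The forward direction is the easy half. If $f\in\Hom_{(V,\CH)}(V,W)$, then $f$ commutes with every $a\in\CH$, and since $\1\in V_{0}$ satisfies $L_{\pm1}^{(n)}\1=\delta_{n,0}\1$ (condition~(\ref{condition-delta1}) for $L_1$, and the $\BZ$-graded vertex algebra structure for $L_{-1}$), we get $L_{\pm1}^{(n)}f(\1)=f(L_{\pm1}^{(n)}\1)=0$ for $n\ge1$, so $f(\1)\in W^{\CH}$. For surjectivity I would start from an arbitrary $w\in W^{\CH}$. Since $w\in W^{\CH}$ in particular kills $L_{-1}^{(n)}=\D^{(n)}$ for $n\ge1$, we have $w\in W^{\D}$, so Lemma~\ref{th:f(1)-in-W^D} already supplies a $\B$-module homomorphism $f_{w}\in\Hom_{(V,\B)}(V,W)$ with $f_{w}(v)=v_{-1}w$ and $f_{w}(\1)=w$. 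The task reduces to checking that this $f_{w}$ is in addition a homomorphism for the $L_{1}$-action, i.e.\ that $f_{w}(L_{1}^{(n)}v)=L_{1}^{(n)}f_{w}(v)$ for all $v\in V,\ n\in\BN$; combined with Lemma~\ref{th:Hom(VV')=deg0} (which is a consequence once $f_w$ is an $\CH$-homomorphism) this will also give the final assertion $W^{\CH}\subset W_{0}$, since $f_w$ would then map $V_0\ni\1$ into $W_0$.

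The main obstacle is exactly this $L_{1}$-intertwining property for $f_{w}$. The natural route is to encode the action through the generating function $e^{zL_{1}}$ and use Lemma~\ref{vacuum-like} together with the conjugation relations. Using $f_{w}(v)=v_{-1}w$ and the formula $Y_W(v,x)w=e^{xL_{-1}}v_{-1}w$ valid on $W^{\D}$ (Lemma~\ref{vacuum-like}, since $L_{-1}^{(n)}=\D^{(n)}$), I would apply $e^{zL_{1}}$ to $Y_W(v,x)w$ and push it through using the conjugation relation~(\ref{H-module-conj-E}) for the $(V,\CH)$-module $W$, the relation~(\ref{eq:ED-deg-2}) between $e^{zL_1}$ and $e^{xL_{-1}}$, and the fact that $L_{1}^{(n)}w=0$ for $n\ge1$ so that $e^{zL_1}w=w$. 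Matching coefficients of the resulting identity against the defining relation~(\ref{condition-conj-E}) for $V$ itself should force $e^{zL_1}f_w(v)=f_w(e^{z(1-zz_0)L_1}\cdots)$ specialized appropriately, yielding $f_w(L_1^{(n)}v)=L_1^{(n)}f_w(v)$. The bookkeeping with the $(1-zz_0)^{\pm2\deg}$ factors and the change of variable $z_0\mapsto z_0/(1-zz_0)$ is where the calculation is delicate, and care is needed because $w$ need not be homogeneous a priori—though once $W^{\CH}\subset W_0$ is known this simplifies; to avoid circularity I would establish the intertwining property purely formally in $z$ first, then read off homogeneity at the end.
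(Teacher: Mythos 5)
Your proposal is correct and follows essentially the same route as the paper: both construct $f_w(v)=v_{-1}w$ from $w\in W^{\CH}$ via Lemma~\ref{vacuum-like} and Lemma~\ref{th:f(1)-in-W^D}, then verify the $e^{zL_{1}}$-intertwining by playing the conjugation relation \eqref{condition-conj-E} in $V$ against \eqref{H-module-conj-E} in $W$ together with $e^{zL_{1}}w=w$. The only cosmetic differences are that the paper's computation avoids \eqref{eq:ED-deg-2} entirely (it writes $v=\Res_{z}z^{-1}Y(v,z)\1$ and keeps every $\deg$-factor acting on elements of $V$, so the circularity you worry about never arises), and it proves $W^{\CH}\subset W_{0}$ directly from \eqref{eq:ED-deg-3} rather than through Lemma~\ref{th:Hom(VV')=deg0}.
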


\begin{proof} It is clear that $\1\in V^{\CH}$. Then
 $f(\1)\in W^\CH$ for $f\in \Hom_{(V,\CH)}(V,W)$.
 Thus the assignment $f\mapsto f(\1)$ is a map
 from $\Hom_{(V,\CH)}(V,W)$ to $W^\CH$, which is denoted by $\phi$.
 Clearly, $\phi$ is linear.
If $f\in\ker \phi$, then $f(v)=f(v_{-1}\1)=v_{-1}f(\1)=v_{-1}\phi(f)=0$ for every $v\in V$,
which implies $f=0$. Thus $\phi$ is injective.
On the other hand, let $w\in W^\CH$.
By Lemma~\ref{vacuum-like},
we have a $V$-module homomorphism $f$ from $V$ to $W$ with $f(v)=v_{-1}w$ for $v\in V$.
In particular, $f(\1)=w$. Furthermore, by Lemma~ \ref{th:f(1)-in-W^D},
\begin{equation*}
f(e^{xL_{-1}}v)=e^{xL_{-1}}f(v)\   \   \  \mbox{ for }v\in V.
\end{equation*}
On the other hand, for $v\in V$, we have
\begin{align*}
    f(e^{xL_{1}}v)&=\Res_{z}z^{-1} f(e^{xL_{1}}Y(v,z)\1)=\Res_{z}z^{-1} f(e^{xL_{1}}Y(v,z)e^{-xL_{1}}\1)\\
    &=\Res_{z}z^{-1} f(Y(e^{x(1-xz)L_{1}}(1-xz)^{-2\deg}v,(1-xz)^{-1}z)\1)\\
    &=\Res_{z}z^{-1} Y_{W}(e^{x(1-xz)L_{1}}(1-xz)^{-2\deg}v,(1-xz)^{-1}z)f(\1)\\
    &=\Res_{z}z^{-1} e^{xL_{1}}Y_{W}(v,z)e^{-xL_{1}}w\\
    &=\Res_{z}z^{-1} e^{xL_{1}}Y_{W}(v,z)w\\
    &=e^{xL_{1}}v_{-1}w\\
    &=e^{xL_{1}}f(v).
\end{align*}
Thus  $f\in\Hom_{(V,\CH)}(V,W)$.
This proves that $\phi$ is surjective. Therefore, $\phi$ is a linear isomorphism.

For the second assertion, let $w\in W^\CH$. Using  \eqref{eq:ED-deg-3} we have
\begin{align*}
    (1-zz_0)^{-2\deg}w=e^{-z(1-zz_0)L_{1}}e^{-(1-zz_0)^{-1}z_0L_{-1}}e^{zL_{1}}e^{z_0L_{-1}}w=w,
\end{align*}
which implies $-2(\deg w)=0$. That is, $w\in W_{0}$. Thus $W^{\CH}\subset W_{0}$.
\end{proof}

\begin{remark}\label{rem:W^H}
Let $V$ be an $\CH$-module vertex algebra and let $W$ be a $(V,\CH)$-module.
For $w\in W$,  from definition, $w\in W^\CH$ if and only if $L_{\pm 1}^{(n)}w=\varepsilon(L_{\pm 1}^{(n)})w$
for $n\in \BN$. (Recall that $\varepsilon$ is the counit map of $\CH$.)
As $\CH$ is generated by $L_{\pm 1}^{(n)}$ for $n\in \BN$, we have
$aw=\varepsilon(a)w$ for $a\in \CH$. Thus
\begin{equation*}
W^\CH=\{ w\in W\ |\ aw=\varepsilon(a)w\   \text{ for }a\in \CH\}.
\end{equation*}
\end{remark}

Let $V$ be an $\CH$-module vertex algebra. Set
\begin{equation}
  L_{1}^{+} V=\sum_{n\ge 1}L_{1}^{(n)}V,\qquad L_{-1}^{+}V=\sum_{n\ge1}L_{-1}^{(n)}V.
\end{equation}
Since $L_{1}^{(n)}$ and $L_{-1}^{(n)}$ for $n\in\BZ_+$ are homogeneous operators,
$L_{1}^{+} V$ and $L_{-1}^{+}V$ are $\BZ$-graded subspaces of $V$.
We have $(L_{1}^{(n)}V)_{0}=L_1^{(n)}V_{n}$ and $(L_{-1}^{(n)}V)_{0}=L_{-1}^{(n)}V_{-n}$
for $n\in \BZ_{+}$.

Here, we have:

\begin{lemma}\label{D+inE+V}
Let $V$ be an $\CH$-module vertex algebra.
Then $(L_{-1}^{+}V)_0\subset (L_{1}^{+}V)_0$.
\end{lemma}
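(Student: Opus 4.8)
The plan is to reduce the inclusion to a statement about individual homogeneous vectors and then run a descending induction. Since $L_{-1}^{(n)}$ raises degree by $n$, we have $(L_{-1}^{+}V)_0=\sum_{n\ge1}L_{-1}^{(n)}V_{-n}$, so it suffices to prove, for every $n\ge1$ and every $v\in V_{-n}$, that $L_{-1}^{(n)}v\in(L_1^{+}V)_0$. I would prove this by descending induction on $n$: by the lower truncation built into Definition~\ref{def-good} there is an $N$ with $V_{-n}=0$ for $n>N$, so the assertion is vacuous for large $n$, and the inductive hypothesis is that it holds in every degree strictly more negative than $-n$, i.e.\ that $L_{-1}^{(n')}V_{-n'}\subset(L_1^{+}V)_0$ for all $n'>n$.

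The engine of the induction is the commutation relation \eqref{eq-5}, which I would apply to $L_1^{(m)}L_{-1}^{(n+m)}v$ for a well-chosen $m\ge1$. Since $L_{-1}^{(n+m)}v\in V_m$, the left-hand side is $L_1^{(m)}$ applied to an element of $V_m$, hence lies in $(L_1^{+}V)_0$. On the right-hand side, only the summand with $i=m$ produces the factor $L_{-1}^{(n)}$: it contributes $(-1)^m\sum_{j=0}^{m}\binom{-n}{j}L_0^{(m-j)}$ acting on $L_{-1}^{(n)}v$. As $v$ has degree $-n$, the operator $L_0^{(m-j)}$ acts as the scalar $\binom{2n}{m-j}$ by Definition~\ref{def-graded-H-module}, and Vandermonde's identity collapses $\sum_{j=0}^{m}\binom{-n}{j}\binom{2n}{m-j}$ to $\binom{n}{m}$. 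Thus the coefficient of $L_{-1}^{(n)}v$ is exactly $(-1)^m\binom{n}{m}$, while every remaining summand ($i<m$) is a scalar multiple of $L_{-1}^{(n+m-i)}\bigl(L_1^{(m-i)}v\bigr)$ with $L_1^{(m-i)}v\in V_{-(n+m-i)}$. Writing the total contribution of these terms as $r$, we get
\begin{equation*}
L_1^{(m)}L_{-1}^{(n+m)}v=(-1)^m\binom{n}{m}\,L_{-1}^{(n)}v+r,\qquad r\in\sum_{n'>n}L_{-1}^{(n')}V_{-n'}.
\end{equation*}
Whenever $\binom{n}{m}$ is a unit in $\BF$ this can be solved for $L_{-1}^{(n)}v$, and its right-hand side lies in $(L_1^{+}V)_0$: the term $L_1^{(m)}L_{-1}^{(n+m)}v$ does by the observation above, and each summand of $r$ does by the inductive hypothesis (each is zero if $V_{-n'}=0$, and otherwise of the form $L_{-1}^{(n')}V_{-n'}$ with $n'>n$).

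The one genuinely modular point, and the main obstacle, is the choice of $m$. Taking $m=1$ recovers the naive commutator and produces the coefficient $\pm n$, which vanishes in $\BF$ precisely when $p\mid n$; this is exactly why the characteristic-zero argument does not transcribe. The remedy is to exploit the divided powers: for any $n\ge1$ there is some $m$ with $1\le m\le n$ and $\binom{n}{m}\ne0$ in $\BF$. Indeed, writing $n$ in base $p$ and letting $p^{s}$ be the lowest power of $p$ occurring with a nonzero digit, the choice $m=p^{s}$ gives $\binom{n}{m}\not\equiv0\pmod p$ by Lucas' theorem. With such an $m$ the displayed identity is invertible, the inductive step goes through, and the proof is complete.
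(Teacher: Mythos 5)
Your proof is correct, and its computational core coincides with the paper's: both arguments rest on the relation \eqref{eq-5} applied to $L_{1}^{(m)}L_{-1}^{(n+m)}v$ for $v\in V_{-n}$, where the $i=m$ summand produces $(-1)^{m}\binom{n}{m}L_{-1}^{(n)}v$, the left-hand side lies in $(L_{1}^{+}V)_0$, the summands with $i<m$ lie in $\sum_{n'>n}L_{-1}^{(n')}V_{-n'}$, and Lucas' theorem guarantees $\binom{n}{m}\neq 0$ in $\BF$ for $m=p^{s}$ with $s$ the position of the lowest nonzero base-$p$ digit of $n$ --- exactly the divided powers $L_{1}^{(1)}$ and $L_{1}^{(p^{t})}$ that the paper itself exploits. (One small wording caution: in \eqref{eq-5} the factors $L_{0}^{(i-j)}$ stand to the right of $L_{-1}^{(n)}$, so they act on $v$ rather than on $L_{-1}^{(n)}v$; your scalar $\binom{2n}{m-j}$ is computed from $\deg v=-n$, which is the correct reading, and the Vandermonde collapse to $\binom{n}{m}$ is right.) Where you genuinely differ from the paper is the induction architecture. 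The paper proves the auxiliary inclusion \eqref{eq:DV-subset-EV-03} by an ascending induction on $t$, repeatedly pushing $L_{-1}^{(kp^{t}+i)}V_{-kp^{t}-i}$ upward into $L_{-1}^{((k+1)p^{t})}V_{-(k+1)p^{t}}+(L_{1}^{+}V)_0$ (via \eqref{eq:haha01}), and only at the very end invokes lower truncation by climbing to a power $p^{t}$ with $V_{-p^{t}}=0$. You instead run a single descending induction on $n$, letting truncation supply a vacuous base case and absorbing all remainder terms directly into the induction hypothesis; the adaptive choice $m=p^{s}$ handles every $n$ uniformly in one step and dispenses with the paper's nested two-parameter induction and its intermediate statement. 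Your organization is shorter and cleaner; what the paper's version buys is only the explicit chain of inclusions through $L_{-1}^{(p^{t})}V_{-p^{t}}$, which is not used elsewhere.
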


\begin{proof}  First, we use induction on positive integer $t$ to show that
\begin{align}\label{eq:DV-subset-EV-03}
    L_{-1}^{(kp^t+i)}V_{-kp^t-i}\subset L_{-1}^{(kp^t+p^t)}V_{-kp^t-p^t}+(L_{1}^{+}V)_0
\end{align}
for $k\in\BN,\ 1\le  i\le p^t.$
 Recall that
\begin{align*}
    L_{1}^{(m)}L_{-1}^{(n)}&=\sum_{i=0}^{\min(m,n)}(-1)^iL_{-1}^{(n-i)}\binom{-2\deg-m-n+2i}{i}L_{1}^{(m-i)}
\end{align*}
for $m,n\in \BN$. In particular, we have
\begin{align*}
    L_{1}^{(1)}L_{-1}^{(n+1)}v
    =L_{-1}^{(n+1)}L_{1}^{(1)}v-nL_{-1}^{(n)}v
\end{align*}
for $v\in V_{-n}$ with $n\in \BN$. From this we get
\begin{align}
    L_{-1}^{(n)}V_{-n}\subset L_{-1}^{(n+1)}V_{-n-1}+L_{1}^{(1)}V_1
\end{align}
if $p\nmid n,$ noticing that $L_{-1}^{(n+1)}V_{-n}\subset V_1$.
Using this relation (repeatedly), we obtain
\begin{align}\label{eq:DV-subset-EV-01}
    L_{-1}^{(kp+i)}V_{-kp-i}\subset L_{-1}^{(kp+p)}V_{-kp-p}+L_{1}^{(1)}V_1
\end{align}
 for $k\in\BN,\  1\le  i\le p.$
This shows that \eqref{eq:DV-subset-EV-03} holds for $t=1$.

Now, assume that $t\ge 2$ and  \eqref{eq:DV-subset-EV-03} holds. 
For any $k\in \BN$ and for any $v\in V_{-kp^t}$, we have
\begin{align*}
    &L_{1}^{(p^t)}L_{-1}^{(kp^t+p^t)}v=\sum_{i=0}^{p^t}(-1)^iL_{-1}^{(kp^t+p^t-i)}\binom{-2\deg-kp^t-2p^t+2i}{i}L_{1}^{(p^t-i)}v\\
    &=-\binom{kp^t}{p^t}L_{-1}^{(kp^t)}v+\sum_{i=0}^{p^t-1}(-1)^iL_{-1}^{(kp^t+p^t-i)}\binom{-2\deg-(k+2)p^t+2i}{i}L_{1}^{(p^t-i)}v\\
    &=-\binom{kp^t}{p^t}L_{-1}^{(kp^t)}v+\sum_{i=1}^{p^t}(-1)^{p^t-i}L_{-1}^{(kp^t+i)}\binom{-2\deg-kp^t-2i}{p^t-i}L_{1}^{(i)}v.
\end{align*}
Note that by Lucas' theorem, $\binom{kp^t}{p^t}\not\equiv 0\pmod{p}$ if $p\nmid k$.
Using this and induction hypothesis \eqref{eq:DV-subset-EV-03}, we get
\begin{align}\label{eq:haha01}
    L_{-1}^{(kp^t)}V_{-kp^t}\subset \sum_{i=1}^{p^t}L_{-1}^{(kp^t+i)}V_{-kp^t-i}+(L_{1}^{+}V)_0
    \subset L_{-1}^{((k+1)p^t)}V_{-(k+1)p^t}+(L_{1}^{+}V)_0
\end{align}
for any $k\in \BN$ with $p\nmid k$.

Now, let $k\in \BN$ and $1\le i\le p^{t+1}-1$.
Then $i=i'+jp^{t}$ for some integers $i'$ and $j$ such that $1\le i'\le p^{t}$, $0\le j\le p-1$.
Using induction hypothesis \eqref{eq:DV-subset-EV-03} and using \eqref{eq:haha01} repeatedly,
we get
\begin{eqnarray*}
  && L_{-1}^{(kp^{t+1}+i)}V_{-kp^{t+1}-i}
    =L_{-1}^{((kp+j)p^t+i')}V_{-(kp+j)p^t-i'}
    \subset L_{-1}^{((kp+j+1)p^t)}V_{-(kp+j+1)p^t}+(L_{1}^{+}V)_0\\
   &&\hspace{1cm} \subset L_{-1}^{(kp^{t+1}+p^{t+1})}V_{-kp^{t+1}-p^{t+1}}+(L_{1}^{+}V)_0.
\end{eqnarray*}
This completes the induction, proving \eqref{eq:DV-subset-EV-03}.

Notice that \eqref{eq:DV-subset-EV-03} implies that for any $n\ge 1$, $L_{-1}^{(n)}V_{-n}\subset  L_{-1}^{(p^t)}V_{-p^t}+(L_{1}^{+}V)_0$
for any positive integer $t$ such that $n\le p^t$.
As $V$ is locally truncated, there exists a positive integer $t$ such that $V_{-m}=0$ for all $m\ge p^t$.
Then we get
\begin{align*}
    (L_{-1}^{+}V)_0=\sum_{n\ge 1}L_{-1}^{(n)}V_{-n}=\sum_{n=1}^{p^t}L_{-1}^{(n)}V_{-n}
    \subset L_{-1}^{(p^t)}V_{-p^t}+(L_{1}^{+}V)_0 =(L_{1}^{+}V)_0,
\end{align*}
as desired.
\end{proof}

As the main result of this section, we have (cf. \cite{Li94}):

\begin{theorem}\label{identification}
Let $V$ be an  $\CH$-module vertex algebra. Then the space of all (symmetric) invariant bilinear forms on $V$
is linearly isomorphic to the dual space of $V_0/(L_{1}^{+} V)_0$.
\end{theorem}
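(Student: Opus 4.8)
The plan is to assemble the theorem from the machinery already developed, reducing it in stages until the only remaining content is Lemma~\ref{D+inE+V}. First I would invoke Proposition~\ref{th:space-BF=Hom(VV')}, which identifies the space of invariant bilinear forms on $V$ with $\Hom_{(V,\CH)}(V,V')$. Next I would apply Proposition~\ref{WH-invariant} with $W=V'$: this gives a linear isomorphism $\Hom_{(V,\CH)}(V,V')\cong (V')^{\CH}$, and moreover tells us $(V')^{\CH}\subset (V')_0$. Since $V'=\bigoplus_{n\in\BZ}V_n^*$ with $(V')_n=V_{-n}^*$ (by the grading convention for the restricted dual, so that the pairing respects degree), we have $(V')_0=V_0^*$. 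Thus the whole problem collapses to computing the subspace $(V')^{\CH}\subset V_0^*$ explicitly.

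The next step is to identify $(V')^{\CH}$ as a subspace of $V_0^*$ via an annihilation condition. By Remark~\ref{rem:W^H}, a functional $f\in V_0^*$ (extended by $0$ on $V_n$ for $n\ne 0$, which is legitimate since $(V')^{\CH}\subset(V')_0$) lies in $(V')^{\CH}$ precisely when $af=\varepsilon(a)f$ for all $a\in\CH$, equivalently when $L_{\pm1}^{(n)}f=0$ for all $n\ge1$. Unwinding the $\CH$-action on the dual via the anti-automorphism $\theta$ of Remark~\ref{anti-automorphism}, the condition $L_{\pm1}^{(n)}f=0$ reads $\langle f,\,\theta(L_{\pm1}^{(n)})v\rangle=\langle f,\,L_{\mp1}^{(n)}v\rangle=0$ for all $v\in V$. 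Restricting attention to the degree-zero component (since $f$ is supported on $V_0$), the operator $L_{-1}^{(n)}$ contributes through $L_{-1}^{(n)}V_{-n}\subset V_0$ and $L_{1}^{(n)}$ through $L_{1}^{(n)}V_{n}\subset V_0$. Hence $f$ annihilates $\theta(L_{1}^{(n)})V=L_{-1}^{(n)}V$ and $\theta(L_{-1}^{(n)})V=L_{1}^{(n)}V$ in degree $0$, so the vanishing condition is exactly that $f\in V_0^*$ kills $(L_{-1}^{+}V)_0+(L_{1}^{+}V)_0$.

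At this point Lemma~\ref{D+inE+V} does the decisive work: it gives $(L_{-1}^{+}V)_0\subset (L_{1}^{+}V)_0$, so the sum $(L_{-1}^{+}V)_0+(L_{1}^{+}V)_0$ collapses to just $(L_{1}^{+}V)_0$. Therefore $(V')^{\CH}$ is precisely the annihilator in $V_0^*$ of the subspace $(L_{1}^{+}V)_0\subset V_0$, which is canonically the dual space $(V_0/(L_{1}^{+}V)_0)^*$. Chaining the isomorphisms
\[
\{\text{invariant bilinear forms}\}\;\cong\;\Hom_{(V,\CH)}(V,V')\;\cong\;(V')^{\CH}\;\cong\;(V_0/(L_{1}^{+}V)_0)^*
\]
yields the theorem. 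I expect the main obstacle to be the bookkeeping in the second step: one must verify carefully that the annihilation conditions coming from $L_1^{(n)}$ and $L_{-1}^{(n)}$ really do reduce, on the degree-zero piece, to annihilating exactly $(L_1^+V)_0$ and $(L_{-1}^+V)_0$ respectively, keeping track of the degree shifts $L_{\pm1}^{(n)}V_{\pm n}\subset V_0$ and using that $f$ is supported in degree $0$ so that contributions from other graded pieces drop out. Once that reduction is clean, the appeal to Lemma~\ref{D+inE+V} is immediate, and the symmetry of the forms is already guaranteed by the earlier proposition that every invariant bilinear form is symmetric.
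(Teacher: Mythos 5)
Your proposal is correct and follows essentially the same route as the paper: reduce via Proposition~\ref{th:space-BF=Hom(VV')} and Proposition~\ref{WH-invariant} to computing $(V')^{\CH}$, identify it inside $V_0^*$ through the $\theta$-twisted dual action as the annihilator of $(L_{1}^{+}V)_0+(L_{-1}^{+}V)_0$, and collapse that sum with Lemma~\ref{D+inE+V}; the paper organizes this last step as two inclusions with the map $\Phi^*$, but the content is identical. One small slip: the restricted dual is graded by $(V')_n=V_n^*$ (not $V_{-n}^*$, which would ruin lower truncation), though this is immaterial here since only the degree-zero piece $(V')_0=V_0^*$ enters your argument.
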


\begin{proof} In view of Propositions \ref{th:space-BF=Hom(VV')} and \ref{WH-invariant},
the space of all symmetric invariant bilinear forms on $V$
is linearly isomorphic to the subspace $(V')^{\CH}$ of $V'$. It then remains to show that $(V')^{\CH}$ is
linearly isomorphic to the dual space of $V_0/(L_{1}^{+} V)_0$.

Let $\Phi$ denote the composition of the natural quotient map of $V_0$ onto $V_0/(L_{1}^{+} V)_0$
with the projection map of $V$ onto $V_0$:
\begin{equation*}
\Phi: \   V\to V_0\to V_0/(L_{1}^{+} V)_0.
\end{equation*}
The dual $\Phi^{*}$ of $\Phi$ is an injective map from $(V_0/(L_{1}^{+} V)_0)^{*}$ to $V'\subset V^{*}$.
We claim
\begin{eqnarray}
(V')^\CH =\Phi^{*}( (V_0/(L_{1}^{+} V)_0)^*).
\end{eqnarray}
Let $v'\in (V')^{\CH}$. Then $L_{-1}^{(n)}v'=0$ for $n\ge 1$ and $v'\in (V')_{0}$ by Lemma \ref{WH-invariant}.
Hence, $\langle v',V_{m}\rangle =0$ for $m\ne 0$ and
$\langle v',L_{1}^{(n)}V\rangle=0$ for $n\ge 1$.
Thus $v'\in\Phi^{*}( (V_0/(L_{1}^{+} V)_0)^*)$.

On the other hand, let $f\in(V_0/(L_{1}^{+} V)_0)^*$.  Then we have $\langle \Phi^{*}(f),V_{m}\rangle=0$
for $m\ne 0$ and $\langle \Phi^{*}(f),(L_{1}^{+} V)_{0}\rangle=0$, which imply
 that $\langle \Phi^{*}(f),L_{1}^{+} V\rangle=0$.
In view of Lemma \ref{D+inE+V}, we have  $\langle \Phi^{*}(f),L_{1}^{+} V+L_{-1}^{+}V\rangle=0$.
It follows that $\langle L_{-1}^{(n)}\Phi^{*}(f),v\rangle =\langle  \Phi^{*}(f), L_{1}^{(n)}v\rangle=0$
and $\langle L_{1}^{(n)}\Phi^{*}(f),v\rangle=\langle \Phi^{*}(f),L_{-1}^{(n)}v\rangle=0$
for $n\ge 1$ and $v\in V$. Thus $L_{-1}^{(n)}\Phi^{*}(f)=0=L_{1}^{(n)}\Phi^{*}(f)$ for $n\ge 1$.
This proves $\Phi^{*}(f)\in (V')^{\CH}$.
\end{proof}

Notice that if $(\cdot,\cdot)$ is a (symmetric) invariant bilinear form on an
$\CH$-module vertex algebra $V$, then the kernel of $(\cdot,\cdot)$ is
a $(V,\CH)$-submodule of $V$, in particular, an ideal of $V$.
As an immediate consequence of Theorem \ref{identification}, we have:

\begin{corollary}\label{existence-form}
Suppose that $V$ is an  $\CH$-module vertex algebra such that $V_{n}=0$ for $n<0$ and $V_0=\BF \1$
and such that $L_{1}^{(n)}V_{n}=0$  for $n\ge 1$.
Then the space of  invariant bilinear forms  on $V$ is one-dimensional and
 there exists a symmetric invariant bilinear form $(\cdot,\cdot)$ on $V$ with  $(\1,\1)=1$.
Furthermore, if $V$ is simple,
there exists a non-degenerate symmetric invariant bilinear form $(\cdot,\cdot)$ on $V$,
which is uniquely determined by  $(\1,\1)=1$.
\end{corollary}

Assume that $V=\bigoplus_{n\in \BZ}V_{n}$ is an $\CH$-module vertex algebra such that
$V_{n}=0$ for $n<0$ and $V_0=\BF \1$.
Notice that for every proper graded $V$-submodule $U$ of $V$, we have $U\cap V_0=0$.
Let $J$ be the sum of all proper graded $V$-submodules of $V$. Then $J$ is the (unique) maximal proper graded
$V$-submodule of $V$ and we have $J\cap V_0=0$.

\begin{lemma}\label{maximal-ideal}
Suppose that $V=\bigoplus_{n\in \BZ}V_{n}$ is an $\CH$-module vertex algebra such that
$V_{n}=0$ for $n<0$ and $V_0=\BF \1$. Then the maximal proper graded
$V$-submodule $J$ of $V$ is an ideal of $V$. If in addition we assume
$L_{1}^{(n)}V_{n}=0$ for all $n\ge 1$, then $J$ is also an $\CH$-submodule of $V$.
\end{lemma}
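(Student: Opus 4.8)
The plan is to treat both assertions by the same device: enlarge $J$ by the operators in question, verify that the enlargement is again a \emph{proper graded} $V$-submodule, and then invoke the maximality of $J$ to conclude that the enlargement is contained in $J$. Throughout I use that, since $J\cap V_0=0$ and $V_n=0$ for $n<0$, the grading of $J$ is concentrated in positive degrees, i.e. $J=\bigoplus_{d\ge 1}J_d$.

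For the first assertion, set $\B J=\sum_{n\ge 0}\D^{(n)}J$. First I would show $\B J$ is a $V$-submodule. Since the adjoint module is a $(V,\B)$-module, relation \eqref{V,B-module-property} gives $Y(v,z)e^{x\D}=e^{x\D}Y(e^{-x\D}v,z)$ on $V$; applying this to $w\in J$ and using that $Y(\D^{(k)}v,z)w\in J((z))$ (as $J$ is a $V$-submodule and $\D^{(k)}v\in V$), comparison of the coefficients of powers of $x$ shows $Y(v,z)\D^{(n)}w\in(\B J)((z))$, i.e. $v_m\D^{(n)}w\in\B J$. Since $\D^{(n)}$ has degree $n$, we have $\B J\subseteq\bigoplus_{d\ge1}V_d$, so $(\B J)_0=0$ and $\1\notin\B J$; thus $\B J$ is a proper graded $V$-submodule, and maximality gives $\B J\subseteq J$, i.e. $\D^{(n)}J\subseteq J$ for all $n$. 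Hence $J$ is $\D$-stable, and then skew-symmetry $Y(w,x)a=e^{x\D}Y(a,-x)w$ (with $Y(a,-x)w\in J((x))$ and $e^{x\D}$ now preserving $J$) shows $w_na\in J$, so $J$ is a two-sided ideal of $V$.

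For the second assertion, note that $J$ is automatically stable under $\CH_{0}$ (each $L_0^{(r)}$ acts as the scalar $\binom{-2\deg}{r}$ on the graded pieces of $J$) and under $\CH_{-}$ (since $L_{-1}^{(n)}=\D^{(n)}$, by the first part). It remains to prove $L_1^{(n)}J\subseteq J$ for $n\ge1$. Set $M=J+\sum_{k\ge1}L_1^{(k)}J$, a graded subspace. Rewriting \eqref{condition-conj-E} as $Y(v,z_0)e^{-zL_1}=e^{-zL_1}Y\bigl(e^{z(1-zz_0)L_1}(1-zz_0)^{-2\deg}v,z_0/(1-zz_0)\bigr)$ and applying it to $w\in J$: the vector $e^{z(1-zz_0)L_1}(1-zz_0)^{-2\deg}v$ is a formal series in $z,z_0$ with coefficients in $V$, so $Y$ of it applied to $w$ has coefficients in $J$, and $e^{-zL_1}=\sum_k(-z)^kL_1^{(k)}$ then lands these in $M$; comparing coefficients of $z^k$ yields $Y(v,z_0)L_1^{(k)}w\in M((z_0))$, so $M$ is a $V$-submodule. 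Here the extra hypothesis enters: $(L_1^{(k)}J)_0=L_1^{(k)}J_k\subseteq L_1^{(k)}V_k=0$ for $k\ge1$, so $M_0=0$ and $M$ is proper. Maximality gives $M\subseteq J$, whence $L_1^{(k)}J\subseteq J$. Since $\CH$ is generated by the $L_{\pm1}^{(n)}$ and $L_0^{(n)}$, it follows that $J$ is an $\CH$-submodule.

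The calculations I am deliberately skipping are the two coefficient-extraction arguments showing that $\B J$ and $M$ are $V$-submodules; these are precisely the formal-variable manipulations (expanding $(1-zz_0)^{-1}$, substituting the variable $z_0/(1-zz_0)$, and commuting $Y$ past $e^{\pm zL_1}$ and $e^{x\D}$) already carried out in the proof of Lemma~\ref{th:eE-on-V'} and in checking axiom \eqref{condition-conj-E}. I expect this to be the only genuinely delicate point: everything else reduces to the degree bookkeeping above and the maximality of $J$.
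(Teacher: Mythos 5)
Your proposal is correct and follows essentially the same route as the paper: the paper likewise forms the spans $\sum_{n}L_{-1}^{(n)}J$ and $\sum_{n}L_{1}^{(n)}J$, shows each is a proper graded $V$-submodule via the conjugation identities $Y(v,x)e^{zL_{-1}}=e^{zL_{-1}}Y(e^{-zL_{-1}}v,x)$ and $Y(v,x)e^{zL_{1}}w=e^{zL_{1}}Y\bigl(e^{-z(1+zx)L_{1}}(1+zx)^{-2\deg}v,x/(1+zx)\bigr)w$, and invokes maximality of $J$. The only difference is cosmetic: you spell out the skew-symmetry step upgrading the $\D$-stable left ideal to a two-sided ideal, which the paper leaves implicit.
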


\begin{proof} Denote by $L_{-1} J$ the linear span of $L_{-1}^{(n)}J$ for $n\in \BN$.
As $L_{-1}^{(n)}V_{m}\subset V_{m+n}$ for $n,m\in \BN$, we have $L_{-1} J\cap V_{0}=0$.
Note that for $v\in V,\ w\in J$, we have
\begin{equation*}
Y(v,x)e^{zL_{-1}}w=e^{zL_{-1}}Y(e^{-zL_{-1}}v,x)w.
\end{equation*}
It follows that $L_{-1} J$ is a graded $V$-submodule of $V$.
From the definition of $J$,  we have $L_{-1} J\subset J$. Then $J$ is an ideal of $V$.
Furthermore, assume $L_{1}^{(n)}V_{n}=0$ for all $n\ge 1$.
Let $L_{1} J$ be the linear span of $L_{1}^{(n)}J$ for $n\in \BN$. It is clear that $L_{1} J$ is graded
with $L_{1} J\cap V_0=0$.
As
\begin{equation*}
Y(v,x)e^{zL_{1}}w=e^{zL_{1}}Y\left(e^{-z(1+zx)L_{1}}(1+zx)^{-2\deg}v,\frac{x}{1+zx}\right)w
\end{equation*}
for $v\in V,\ w\in J$,  $L_{1} J$ is a $V$-submodule of $V$.
Thus $L_{1} J\subset J$. This shows that $J$ is an $\CH$-submodule of $V$.
Consequently, $V/J$ is an $\CH$-module vertex algebra and it is a simple graded $V$-module.
\end{proof}

At the end of this section, we prove a technical result which will be used in the next section.

\begin{lemma}\label{th:induction}
Let $V$ be a $\BZ$-graded vertex algebra which is also an $\CH$-module such that
\begin{equation*}
L_{-1}^{(n)}=\D^{(n)},\    \    \    \    L_{0}^{(n)}=\binom{-2\deg}{n}
\end{equation*}
 on $V$ for $n\in \BN$.  Assume
\begin{gather}\label{eq:conj-E}
    e^{zL_{1}}Y(a,z_0)e^{-zL_{1}}=Y\bigl(e^{z(1-zz_0)L_{1}}(1-zz_0)^{-2\deg}a,z_0/(1-zz_0)\bigr)
\end{gather}
for all $a\in T$, where $T$ is a generating subset of $V$. Then $V$ is an $\CH$-module vertex algebra.
\end{lemma}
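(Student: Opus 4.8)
The plan is to verify the three conditions of Definition~\ref{def-good} for $V$. Lower truncation (i) is exactly what makes $e^{zL_1}=\sum_{n\ge0}z^nL_1^{(n)}$ act on $V$ at all: on a homogeneous vector the sum is a polynomial in $z$ since $L_1^{(n)}$ strictly lowers degree, so I take (i) as part of the standing data. The real content is condition (iii): the conjugation relation \eqref{condition-conj-E}, assumed in \eqref{eq:conj-E} only for $a\in T$, must be extended to every $v\in V$, and condition (ii) will then drop out as the special case $v=\1$.

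First I would introduce $S=\{v\in V:\ \eqref{condition-conj-E}\text{ holds for }v\}$, a graded subspace containing $T$. Since $T$ generates $V$, it suffices to prove that $S$ is a vertex subalgebra, i.e. that $\1\in S$ and that $S$ is closed under all products $u_nv$. Because the iterate $Y(Y(u,x_0)v,x_2)$ packages every mode $u_nv$, closure under products is a single identity, which I would obtain by conjugating the Jacobi identity \eqref{Jacobi-va} by $e^{zL_1}$.

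This conjugation of Jacobi is the heart of the matter. For $u,v\in S$ one conjugates each operator product on the left of \eqref{Jacobi-va} and rewrites it by \eqref{condition-conj-E}, so that $e^{zL_1}Y(u,x_1)Y(v,x_2)e^{-zL_1}=Y(\Theta(z,x_1)u,\,x_1/(1-zx_1))\,Y(\Theta(z,x_2)v,\,x_2/(1-zx_2))$, where $\Theta(z,x)=e^{z(1-zx)L_1}(1-zx)^{-2\deg}$. Reading off the right side of \eqref{Jacobi-va} then yields the conjugate of $Y(Y(u,x_0)v,x_2)$, and I must check it equals $Y(\Theta(z,x_2)Y(u,x_0)v,\,x_2/(1-zx_2))$, which is precisely $u_nv\in S$. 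I expect this matching to be the main obstacle: it requires pushing the three formal delta functions through the fractional-linear substitution $x\mapsto x/(1-zx)$ and verifying that the two-point transform $\Theta(z,x_1)u\otimes\Theta(z,x_2)v$ transports, under the state-field correspondence, to $\Theta(z,x_2)$ applied to $Y(u,x_0)v$; it is here that the group-like property \eqref{coalgebra2} of $e^{zL_1}$ is used to guarantee compatibility of the transform with vertex-algebra products. This is the characteristic-$p$ counterpart of the delta-function computation of \cite{FHL}, and extra care is needed because every exponential is a divided-power series in the Kostant form $\CH$ and all binomial coefficients are reduced modulo $p$.

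It remains to place $\1$ in $S$, equivalently to establish (ii). Applying \eqref{condition-conj-E} for a generator $a\in T$ to the vacuum and evaluating $e^{zL_1}Y(a,z_0)\1$ in two ways—once by the creation property \eqref{special-skewsymm} together with the purely $\CH$-theoretic relation \eqref{eq:ED-deg-2}, and once by the conjugation on $T$—forces $Y(a,z_0)\bigl(\1-e^{-zL_1}\1\bigr)=0$, whence $Y(a,z_0)L_1^{(n)}\1=0$ for all $a\in T$ and $n\ge1$. Since $L_1^{(n)}\1\in V_{-n}$, lower truncation kills these for large $n$, and I expect the remaining cases to follow from the fact that no nonzero vector can be annihilated by the vertex operators of all generators (which holds in the intended applications, where $V$ vanishes in negative degrees). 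Once $\1\in S$, closure under products together with $T\subseteq S$ gives $S=V$ by the generation hypothesis, so \eqref{condition-conj-E} holds on all of $V$ and $V$ is an $\CH$-module vertex algebra.
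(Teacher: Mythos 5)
Your core strategy coincides with the paper's: take $S$ (the paper's $U$) to be the set of vectors satisfying \eqref{eq:conj-E}, and prove it is a vertex subalgebra by conjugating the Jacobi identity \eqref{Jacobi-va} by $e^{zL_{1}}$, rewriting both operator products via \eqref{eq:conj-E}, pushing the delta functions through the substitutions $x_i\mapsto x_i/(1-zx_i)$, and recognizing the outcome as $Y\bigl(\Theta(z,x_2)Y(u,x_0)v,\,x_2/(1-zx_2)\bigr)$ by means of the transport identity $\Theta(z,x_2)Y(u,x_0)=Y\bigl(\Theta(z,x_2+x_0)u,\,x_0/((1-zx_2)(1-z(x_2+x_0)))\bigr)\Theta(z,x_2)$, which is exactly the paper's \eqref{eq:new-add}. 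You leave this computation as a stated expectation --- it is the actual content of the paper's proof --- but your outline of it is the right one. One correction: the compatibility you need is not supplied by the group-like property \eqref{coalgebra2}; identity \eqref{eq:new-add} follows from \eqref{eq:conj-deg}, \eqref{eq:conj-deg-e^E}, \eqref{eq:eEeE-eE} and, crucially, from \eqref{eq:conj-E} applied to the element $u$ itself, so it is available precisely for $u\in S$, which is all the closure argument requires.

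The genuine gap is in your vacuum step. Your two-fold evaluation of $e^{zL_{1}}Y(a,z_0)\1$ (once via \eqref{special-skewsymm} and \eqref{eq:ED-deg-2}, once via \eqref{eq:conj-E}) does yield $Y(a,z_0)L_{1}^{(n)}\1=0$ for $a\in T$, $n\ge 1$; that much is correct. But the next claim --- that no nonzero vector can be annihilated by $Y(a,x)$ for all generators $a$ --- is false in general: for fixed $w$, the annihilator $\{v\in V \mid Y(v,x)w=0\}$ is closed under all products $u_mv$ (take $\Res_{x_1}$ of the Jacobi identity) and under the operators $\D^{(k)}$, so it contains every iterated product $a^{(1)}_{n_1}\cdots a^{(r)}_{n_r}\1$ with $r\ge 1$, $a^{(i)}\in T$, and can fail to contain only the line $\BF\1$; since $V$ is spanned by $\1$ together with such products, your claim amounts to $\1$ lying in the span of genuine products of generators, which need not hold (in the commutative vertex algebra $\BF[\epsilon]/(\epsilon^{2})$ with zero derivation, $\epsilon$ generates, yet $Y(\epsilon,x)\epsilon=0$). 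Your fallback hypothesis that $V_n=0$ for $n<0$ both lies outside the lemma and trivializes the issue, since then $L_{1}^{(n)}\1\in V_{-n}=0$ at once. To be fair, the paper itself elides this point: its proof simply declares $\1\in U$, which is tantamount to assuming condition \eqref{condition-delta1} of Definition \ref{def-good}, and in the applications (Proposition \ref{affine-module-va} and its Virasoro analogue) conditions (i) and (ii) are verified directly before the lemma is invoked. So the clean repair is to treat (i) and (ii) as standing hypotheses, as the paper implicitly does, rather than to try to derive (ii) from \eqref{eq:conj-E} on generators, where your proposed derivation does not close.
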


\begin{proof} Let $U$ consist of vectors $a\in V$ such that \eqref{eq:conj-E} holds.
It suffices to show $V=U$. It is clear that $\1\in U$ and we have $T\subset U$ from assumption.
Then it suffices to prove that $U$ is a vertex subalgebra.
Set
\begin{equation*}
    \Phi(x,z)=e^{x(1-xz)L_{1}}(1-xz)^{-2\deg}.
\end{equation*}
For $a\in U$, as $z^{\deg}Y(a,z_0)z^{-\deg}=Y(z^{\deg }a,zz_0)$, we have that
\begin{eqnarray}\label{eq:new-add}
    \Phi(x,x_2)Y(a,x_0)=Y\left(\Phi(x,x_2+x_0)a, \frac{x_0}{(1-xx_2)(1-x(x_2+x_0))} \right)\Phi(x,x_2).
\end{eqnarray}
Let $u,v\in U$. Using Jacobi identity, \eqref{eq:eEeE-eE}, \eqref{eq:conj-E} and $\delta$-function properties,
we have
\begin{align*}
    &\quad~ x_2^{-1}\delta\biggl(\frac{x_1-x_0}{x_2}\biggr)e^{xL_{1}}Y(Y(u,x_0)v,x_2)e^{-xL_{1}}\\
    &=x_0^{-1}\delta\biggl(\frac{x_1-x_2}{x_0}\biggr)e^{xL_{1}}Y(u,x_1)Y(v,x_2)e^{-xL_{1}}
        -x_0^{-1}\delta\biggl(\frac{x_2-x_1}{-x_0}\biggr)e^{xL_{1}}Y(v,x_2)Y(u,x_1)e^{-xL_{1}}\\
    &=x_0^{-1}\delta\biggl(\frac{x_1-x_2}{x_0}\biggr)Y\bigl(\Phi(x,x_1)u,x_1/(1-xx_1)\bigr)
       Y\bigl(\Phi(x,x_2)v,x_2/(1-xx_2)\bigr)\\
        &\qquad    -x_0^{-1}\delta\biggl(\frac{x_2-x_1}{-x_0}\biggr)
        Y\bigl(\Phi(x,x_2)v,x_2/(1-xx_2)\bigr)Y\bigl(\Phi(x,x_1)u,x_1/(1-xx_1)\bigr)\\
    &=x_0^{-1}\delta\biggl(\frac{\frac{x_1}{1-xx_1}-\frac{x_2}{1-xx_2}}{\frac{x_0}{(1-xx_1)(1-xx_2)}}\biggr)
    Y\bigl(\Phi(x,x_1)u,x_1/(1-xx_1)\bigr)Y\bigl(\Phi(x,x_2)v,x_2/(1-xx_2)\bigr)\\
        &\qquad    -x_0^{-1}\delta\biggl(\frac{\frac{x_2}{1-xx_2}-\frac{x_1}{1-xx_1}}{-\frac{x_0}{(1-xx_1)(1-xx_2)}}\biggr)
        Y\bigl(\Phi(x,x_2)v,x_2/(1-xx_2)\bigr)Y\bigl(\Phi(x,x_1)u,x_1/(1-xx_1)\bigr)\\
    &=(1-xx_1)^{-1}(1-xx_2)^{-1}\biggl(\frac{x_0}{(1-xx_1)(1-xx_2)}\biggr)^{-1}\delta\biggl(\frac{\frac{x_1}{1-xx_1}-\frac{x_2}{1-xx_2}}{\frac{x_0}{(1-xx_1)(1-xx_2)}}\biggr)\times\\
        &\qquad Y\bigl(\Phi(x,x_1)u,x_1/(1-xx_1)\bigr)Y\bigl(\Phi(x,x_2)v,x_2/(1-xx_2)\bigr)\\
        &\qquad    -(1-xx_1)^{-1}(1-xx_2)^{-1}\biggl(\frac{x_0}{(1-xx_1)(1-xx_2)}\biggr)^{-1}\delta\biggl(\frac{\frac{x_2}{1-xx_2}-\frac{x_1}{1-xx_1}}{-\frac{x_0}{(1-xx_1)(1-xx_2)}}\biggr)\times\\
        &\qquad Y\bigl(\Phi(x,x_2)v,x_2/(1-xx_2)\bigr)Y\bigl(\Phi(x,x_1)u,x_1/(1-xx_1)\bigr)\\
    &=(1-xx_1)^{-1}(1-xx_2)^{-1}\frac{1-xx_2}{x_2}\delta\biggl(\frac{\frac{x_1}{1-xx_1}-\frac{x_0}{(1-xx_1)(1-xx_2)}}{\frac{x_2}{1-xx_2}}\biggr)\times\\
    &\qquad Y(Y\bigl(\Phi(x,x_1)u,x_0/(1-xx_1)(1-xx_2)\bigr)\Phi(x,x_2)v,x_2/(1-xx_2)\bigr)\\
    &=(1-xx_1)^{-1}(1-xx_2)^{-1}\Biggl(\biggl(\frac{x_0}{(1-xx_1)(1-xx_2)}\biggr)^{-1}\delta\biggl(\frac{\frac{x_1}{1-xx_1}-\frac{x_2}{1-xx_2}}{\frac{x_0}{(1-xx_1)(1-xx_2)}}\biggr)\\
        &\qquad -\biggl(\frac{x_0}{(1-xx_1)(1-xx_2)}\biggr)^{-1}\delta\biggl(\frac{\frac{x_2}{1-xx_2}-\frac{x_1}{1-xx_1}}{\frac{-x_0}{(1-xx_1)(1-xx_2)}}\biggr)\Biggr)\times  \\
        &\qquad Y(Y\bigl(\Phi(x,x_1)u,x_0/(1-xx_1)(1-xx_2)\bigr)\Phi(x,x_2)v,x_2/(1-xx_2)\bigr)\\
    &=(1-xx_1)^{-1}(1-xx_2)^{-1}\Biggl(\biggl(\frac{x_0}{(1-xx_1)(1-xx_2)}\biggr)^{-1}\delta\biggl(\frac{x_1-x_2}{x_0}\biggr)\\
        &\qquad -\biggl(\frac{x_0}{(1-xx_1)(1-xx_2)}\biggr)^{-1}\delta\biggl(\frac{x_2-x_1}{-x_0}\biggr)\Biggr)\times \\
        &\qquad Y(Y\bigl(\Phi(x,x_1)u,x_0/(1-xx_1)(1-xx_2)\bigr)\Phi(x,x_2)v,x_2/(1-xx_2)\bigr)\\
    &=\Biggl(x_0^{-1}\delta\biggl(\frac{x_1-x_2}{x_0}\biggr)-x_0^{-1}\delta\biggl(\frac{x_2-x_1}{-x_0}\biggr)\Biggr)\times \\
    &\qquad Y(Y\bigl(\Phi(x,x_1)u,x_0/(1-xx_1)(1-xx_2)\bigr)\Phi(x,x_2)v,x_2/(1-xx_2)\bigr)\\
    &=x_2^{-1}\delta\biggl(\frac{x_1-x_0}{x_2}\biggr)Y(Y\bigl(\Phi(x,x_1)
       u,x_0/(1-xx_1)(1-xx_2)\bigr)\Phi(x,x_2)v,x_2/(1-xx_2)\bigr).
\end{align*}
On the other hand, using \eqref{eq:conj-deg}, \eqref{eq:new-add} and $\delta$-function properties, we have
\begin{align*}
    &\quad~ x_2^{-1}\delta\biggl(\frac{x_1-x_0}{x_2}\biggr)
    Y\left(e^{x(1-xx_2)L_{1}}(1-xx_2)^{-2\deg}Y(u,x_0)v,\frac{x_2}{1-xx_2}\right)\\
    &=x_2^{-1}\delta\biggl(\frac{x_1-x_0}{x_2}\biggr)Y\left(\Phi(x,x_2)Y(u,x_0)v,\frac{x_2}{1-xx_2}\right)\\
    &=x_2^{-1}\delta\biggl(\frac{x_1-x_0}{x_2}\biggr)Y\left(Y\left(\Phi(x,x_2+x_0)u,\frac{x_0}{(1-xx_2)(1-x(x_2+x_0))}\right)\Phi(x,x_2)v,
    \frac{x_2}{1-xx_2}\right)\\
    &=x_2^{-1}\delta\biggl(\frac{x_1-x_0}{x_2}\biggr)Y\left(Y\left(\Phi(x,x_1)
      u,\frac{x_0}{(1-xx_2)(1-xx_1)}\right)\Phi(x,x_2)v,\frac{x_2}{1-xx_2}\right).
\end{align*}
It follows that $U$ is closed.
Therefore, $U$ is a vertex subalgebra, as desired.
\end{proof}

\section{Affine vertex algebras and Virasoro vertex algebras}

In this section, we restrict ourselves to affine vertex algebras and Virasoro vertex algebras over field $\BF$.

First, we study  affine vertex algebras.
Let $\fg$ be a Lie algebra over $\BF$ equipped with a symmetric invariant  bilinear form $\langle\cdot,\cdot\rangle$
and let $\hat{\fg}=\fg\otimes \BF[t,t^{-1}]\oplus \BF {\bf k}$ be the associated affine Lie algebra.

Recall that for a bialgebra $B$,
a {\em $B$-module Lie algebra} is a Lie algebra $\fg$ with a $B$-module structure such that
\begin{align}
b[u,v]=\sum [b^{(1)}u,b^{(2)}v]\   \   \   \text{ for }b\in B,\ u,v\in \fg.
\end{align}

\begin{lemma}
The affine Lie algebra $\hat\fg$ is an $\CH$-module Lie algebra with
\begin{eqnarray}
&& L_{-1}^{(r)} (\bk)=\delta_{r,0}\bk,\     \    \        L_{-1}^{(r)} (a\otimes t^{n})=(-1)^{r}\binom{n}{r} a\otimes t^{n-r},\\
&&L_{1}^{(r)} (\bk)=\delta_{r,0}\bk,\     \    \    L_{1}^{(r)} (a\otimes t^{n})=\binom{-n}{r} a\otimes t^{n+r},   \\
&& L_{0}^{(r)}(\bk)=\delta_{r,0}\bk,\   \   \   \    L_{0}^{(r)}(a\otimes t^{n})=\binom{2n}{r} a\otimes t^{n}
\end{eqnarray}
for $r\in\BN,\ a\in \fg,\ n\in \BZ$.
\end{lemma}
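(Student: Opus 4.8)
The plan is to split the statement into two essentially independent claims: that the displayed formulas define an $\CH$-module structure on $\hat{\fg}$, and that this structure is compatible with the Lie bracket in the sense of an $\CH$-module Lie algebra.

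For the module structure I would avoid re-checking the defining relations of $\CH$ by hand and instead invoke Example~\ref{Laurent}. Writing $t$ for $x$ and assigning $a\otimes t^n$ the degree $-n$, the three displayed formulas for $L_{-1}^{(r)}$, $L_1^{(r)}$, $L_0^{(r)}$ on $a\otimes t^n$ coincide verbatim with those of Example~\ref{Laurent}, with $\CH$ acting only on the Laurent-polynomial factor. Thus $\fg\otimes\BF[t,t^{-1}]\cong\fg\otimes\BF[t,t^{-1}]$ is the $\CH$-module of that example with $\fg$ as a (trivial) coefficient space, and $\BF\bk$ is the one-dimensional trivial module on which every $a\in\CH$ acts by $\varepsilon(a)$ (here $\deg\bk=0$, using $p\ne 2$). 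Hence $\hat{\fg}$ is a $\BZ$-graded weight $\CH$-module.

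For the compatibility $a[u,v]=\sum[a^{(1)}u,a^{(2)}v]$ I would first reduce to a generating set: the set of $a\in\CH$ for which this identity holds for all $u,v$ is a unital subalgebra of $\CH$, by the standard computation using that $\Delta$ is an algebra map and is coassociative. Since $\CH$ is generated as an algebra by $L_{\pm1}^{(r)}$ ($r\in\BN$), it suffices to treat these, and I would package them into the generating functions $e^{zL_{\pm1}}$. From $\Delta(L_{\pm1}^{(n)})=\sum_{i=0}^nL_{\pm1}^{(n-i)}\otimes L_{\pm1}^{(i)}$ one gets that these are grouplike, $\Delta(e^{zL_{\pm1}})=e^{zL_{\pm1}}\otimes e^{zL_{\pm1}}$, so the whole family of conditions collapses to the single requirement that $e^{zL_{\pm1}}$ be a Lie-algebra endomorphism, $e^{zL_{\pm1}}[u,v]=[e^{zL_{\pm1}}u,\,e^{zL_{\pm1}}v]$ in $\hat{\fg}[[z]]$.

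By bilinearity it is enough to verify this on basis vectors. When one argument is $\bk$ both sides vanish, as $\bk$ is central and $\CH$-fixed. The essential case is $u=a\otimes t^m$, $v=b\otimes t^n$, where $[u,v]=[a,b]\otimes t^{m+n}+m\langle a,b\rangle\delta_{m+n,0}\bk$. Expanding both sides of the $e^{zL_1}$-identity, the $\fg$-valued part matches by the Vandermonde identity $\sum_{s+t=r}\binom{-m}{s}\binom{-n}{t}=\binom{-m-n}{r}$ (and symmetrically for $e^{zL_{-1}}$). The genuinely delicate point, which I expect to be the main obstacle, is the central term: after expansion it contributes $\sum_{s+t=N}\binom{-m}{s}\binom{-n}{t}(m+s)$ with $N=-(m+n)$, which must equal $m$ when $N=0$ and vanish when $N>0$ (the case $N<0$ being empty). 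I would evaluate this by splitting off $m$ and applying the absorption identity $s\binom{-m}{s}=-m\binom{-m-1}{s-1}$ followed by Vandermonde, obtaining $m\binom{-m-n}{N}-m\binom{-m-n-1}{N-1}$; since $-m-n=N$, this is $m-m=0$ for $N\ge 1$ and reduces to $m$ for $N=0$, matching $m\delta_{m+n,0}$ exactly. The $e^{zL_{-1}}$ computation is entirely parallel. As all these are identities of integers reduced modulo $p$, they descend to $\BF$, completing the verification.
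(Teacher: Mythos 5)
Your proposal is correct and takes essentially the same route as the paper: the $\CH$-module structure is read off from Example~\ref{Laurent}, the compatibility condition is reduced to the generators $L_{\pm 1}^{(r)}$ of $\CH$, and the central term is settled by exactly the same absorption-plus-Vandermonde computation, namely $\sum_{s+t=r}\binom{-m}{s}\binom{-n}{t}(m+s)=m\binom{-m-n}{r}-m\binom{-m-n-1}{r-1}$, which vanishes when $m+n+r=0$ with $r\ge 1$ and gives $m$ when $r=0$. The only cosmetic differences are that you package the check into the grouplike generating functions $e^{zL_{\pm 1}}$, make explicit the (standard) fact that the elements of $\CH$ satisfying the compatibility form a subalgebra, and verify the $L_{-1}^{(r)}$ case by the parallel computation, whereas the paper cites \cite{LM} for that case.
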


\begin{proof}  Recall from Example \ref{Laurent} that $\BF[x,x^{-1}]$ is an $\CH$-module algebra.
From this we see that $\hat{\fg}$ with the defined action becomes an $\CH$-module,
which is the direct sum of $\CH$-modules $\BF {\bf k}$ and $\fg\otimes \BF[t,t^{-1}]$.
Furthermore, for $r\in \BN$, $a,b\in\fg$, $m,n\in\BZ$, we have
\begin{align*}
    &\quad~L_{1}^{(r)}[a\otimes t^{m},b\otimes t^{n}]\\
    &=L_{1}^{(r)}([a,b]\otimes t^{m+n})+m\langle a,b\rangle\delta_{m+n,0}\delta_{r,0}\bk\\
    &=\binom{-m-n}{r}[a,b]\otimes t^{m+n+r}+m\langle a,b\rangle\delta_{m+n,0}\delta_{r,0}\bk\\
    &=\sum_{i=0}^r\binom{-m}{r-i}\binom{-n}{i}\left([a,b]\otimes t^{m+n+r}+(m+r-i)\langle a,b\rangle\delta_{m+n+r,0}\bk\right)\\
    &=\sum_{i=0}^r[L_{1}^{(r-i)}(a\otimes t^{m}),L_{1}^{(i)}(b\otimes t^{n})],
\end{align*}
noticing that for $r=0$,
\begin{equation*}
    \sum_{i=0}^r\binom{-m}{r-i}\binom{-n}{i}(m+r-i)=m
\end{equation*}
and for $r\ge 1$,
\begin{align*}
    &\phantom{=}\,\,\delta_{m+n+r,0}\sum_{i=0}^r\binom{-m}{r-i}\binom{-n}{i}(m+r-i)\\
    &=m\delta_{m+n+r,0}\sum_{i=0}^r\binom{-m}{r-i}\binom{-n}{i}+\sum_{i=0}^r\binom{-m}{r-i}\binom{-n}{i}(r-i)\\
    &=\delta_{m+n+r,0}\left(m\binom{-m-n}{r}+\sum_{i=0}^{r-1}\binom{-m}{r-i}\binom{-n}{i}(r-i)\right)\\
    &=\delta_{m+n+r,0}\left(m\binom{-m-n}{r}-m\binom{-m-n-1}{r-1}\right)\\
    &=0.
\end{align*}
It was proved in \cite{LM} that
\begin{align*}
   L_{-1}^{(r)}[a\otimes t^{m},b\otimes t^{n}]=\sum_{i=0}^r[L_{-1}^{(r-i)}(a\otimes t^{m}),L_{-1}^{(i)}(b\otimes t^{n})].
\end{align*}
Since $\CH$ as an algebra is generated by $L_{\pm 1}^{(n)}$ for $n\in \BN$,
 it follows that $\hat\fg$ is an $\CH$-module Lie algebra.
\end{proof}

For $a\in \fg,\ n\in \BZ$, as a common practice we shall alternatively use $a(n)$ for $a\otimes t^{n}$. Set
\begin{align}
a(x)=\sum_{n\in \BZ}a(n)x^{-n-1}\in \hat{\fg}[[x,x^{-1}]].
\end{align}
We have
\begin{align}
e^{zL_{-1}}(a(x))=a(x+z),\   \   \   \   e^{zL_1}(a(x))=(1-xz)^{-2}a(x(1-xz)^{-1}).
\end{align}

With $\hat\fg$ an $\CH$-module Lie algebra,
$U(\hat\fg)$ is naturally an $\CH$-module algebra (cf. \cite{LM}).
Let $\ell\in \BF$. Recall the vertex algebra $V_{\hat{\fg}}(\ell,0)$, whose underlying space is the
$\hat{\fg}$-module $U(\hat{\fg})/J_{\ell}$, where $J_{\ell}$ is the left ideal generated by
$\fg\otimes \BF[t]$ and ${\bf k}-\ell$. Furthermore, the vacuum vector $\1$  is $1+J_{\ell}$ and  we have
\begin{align}
Y(a,x)=a(x)\   \   \   \mbox{ for }a\in \fg,
\end{align}
where $\fg$ is considered as a subspace
of $V_{\hat{\fg}}(\ell,0)$ with $a$ being identified with $a(-1)\1$ for $a\in \fg$.
Notice that $\fg\otimes \BF[t]+ \BF (\bk-\ell)$ is an $\CH$-submodule of $U(\hat{\fg})$.
It follows that $J_{\ell}$ is an  $\CH$-submodule. Then
$V_{\hat{\fg}}(\ell,0)$ is naturally an $\CH$-module (with $L_{1}^{(r)}\1=\delta_{r,0}\1$ and $L_{-1}^{(r)}\1=\delta_{r,0}\1$
for $r\in \BN$).

View $\hat{\fg}$ as a $\BZ$-graded Lie algebra with
\begin{equation*}
\deg {\bf k}=0,\   \   \   \   \deg (\fg\otimes t^{n})=-n\   \  \text{ for }n\in \BZ,
\end{equation*}
so that $U(\hat{\fg})$ is a $\BZ$-graded algebra. We see that $U(\hat{\fg})$ is a $\BZ$-graded $\CH$-module.
 It is clear that $J_{\ell}$ is a graded submodule. Consequently,
$V_{\hat{\fg}}(\ell,0)$ is a $\BZ$-graded vertex algebra and a graded $\CH$-module.
Furthermore, we have:

\begin{proposition}\label{affine-module-va}
Vertex algebra $V_{\hat{\fg}}(\ell,0)$ is an $\CH$-module vertex algebra.
\end{proposition}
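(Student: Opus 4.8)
The plan is to invoke Lemma~\ref{th:induction}, which reduces the verification of the conjugation relation \eqref{condition-conj-E} from all of $V_{\hat{\fg}}(\ell,0)$ to a generating set. Since $\fg$ (more precisely $\{a(-1)\1\mid a\in\fg\}$) generates $V_{\hat{\fg}}(\ell,0)$ as a vertex algebra, it suffices to check the two operator identities hypothesized by that lemma together with \eqref{eq:conj-E} for $v=a$ with $a\in\fg$. The remaining conditions of Definition~\ref{def-good}, namely lower truncation $V_n=0$ for $n<0$ and $L_1^{(n)}\1=\delta_{n,0}\1$, are immediate from the construction of $V_{\hat{\fg}}(\ell,0)$ and from $b\1=\varepsilon(b)\1$ for $b\in\CH$.

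First I would confirm the two operator identities. For $L_0$: since $\deg(a\otimes t^n)=-n$ and $L_0^{(r)}(a\otimes t^n)=\binom{2n}{r}(a\otimes t^n)=\binom{-2\deg}{r}(a\otimes t^n)$, the identity $L_0^{(r)}=\binom{-2\deg}{r}$ holds on $\hat{\fg}$; it then propagates to $U(\hat{\fg})$, hence to $V_{\hat{\fg}}(\ell,0)$, because the coproduct formula \eqref{coalgebra2} together with the Vandermonde identity $\sum_i\binom{-2d_1}{r-i}\binom{-2d_2}{i}=\binom{-2(d_1+d_2)}{r}$ and the $\CH$-module-algebra property reproduce $\binom{-2\deg}{r}$ on products of homogeneous elements. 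For $L_{-1}$: the restriction of the $\CH$-action to $\CH_-\cong\B$ must coincide with the canonical $\B$-module structure $\D^{(n)}v=v_{-n-1}\1$; I would verify this on the generators via the direct computation $L_{-1}^{(k)}(a\otimes t^{-1})=a(-1-k)$ and $b\1=\varepsilon(b)\1$, which gives $L_{-1}^{(k)}a=a(-k-1)\1=\D^{(k)}a$, and then extend to all of $V_{\hat{\fg}}(\ell,0)$ using that $e^{zL_{-1}}$ implements translation of vertex operators (the relation $e^{zL_{-1}}(a(x))=a(x+z)$ on generators), as in \cite{LM}.

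The core step is \eqref{eq:conj-E} for $a=a(-1)\1$, where $\deg a=1$. On the right-hand side I would first show $L_1^{(n)}a=0$ for $n\ge 1$: indeed $L_1^{(1)}$ acts as a derivation with $L_1^{(1)}(a\otimes t^{-1})=a\otimes t^0=a(0)$, so $L_1^{(1)}a=a(0)\1=0$, while $L_1^{(n)}(a\otimes t^{-1})=\binom{1}{n}a\otimes t^{n-1}=0$ for $n\ge 2$. Hence $e^{z(1-zz_0)L_1}a=a$, and using $\deg a=1$ the right-hand side of \eqref{eq:conj-E} collapses to $(1-zz_0)^{-2}a(z_0/(1-zz_0))$. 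For the left-hand side, the operator $Y(a,z_0)=\sum_n a(n)z_0^{-n-1}$ acts on $V_{\hat{\fg}}(\ell,0)$ by left multiplication by the element $a(z_0)$; applying the module-algebra identity to $a(z_0)\cdot(e^{-zL_1}w)$ and using $\Delta(e^{zL_1})=e^{zL_1}\otimes e^{zL_1}$ turns the conjugation $e^{zL_1}Y(a,z_0)e^{-zL_1}$ into left multiplication by the internal $\CH$-action $e^{zL_1}\!\cdot a(z_0)$, and the stated identity $e^{zL_1}(a(x))=(1-xz)^{-2}a(x(1-xz)^{-1})$ then yields $(1-z_0z)^{-2}a(z_0/(1-z_0z))$, matching the right-hand side.

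I expect the main obstacle to be precisely this left-hand side computation: justifying rigorously that conjugation by $e^{zL_1}$ of the mode-generating series equals left multiplication by $e^{zL_1}\!\cdot a(z_0)\in\hat{\fg}[[z]]((z_0))$, and then checking that the two formal expansions $(1-z_0z)^{-2}a(z_0/(1-z_0z))$, coming from the internal action, and $(1-zz_0)^{-2}a(z_0/(1-zz_0))$, coming from \eqref{condition-conj-E}, are expanded in the same region and therefore literally agree. Once this and the two operator identities above are in place, Lemma~\ref{th:induction} upgrades \eqref{eq:conj-E} from the generators to all of $V_{\hat{\fg}}(\ell,0)$ and concludes that it is an $\CH$-module vertex algebra.
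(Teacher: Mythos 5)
Your proposal is correct and follows essentially the same route as the paper's proof: both check the hypotheses of Lemma~\ref{th:induction} (the identities $L_{-1}^{(n)}=\D^{(n)}$ and $L_{0}^{(n)}=\binom{-2\deg}{n}$, plus conditions (\ref{condition-grading-truncated}) and (\ref{condition-delta1}) of Definition~\ref{def-good}), then verify \eqref{eq:conj-E} on the generating set $\fg$ by turning the conjugation $e^{zL_{1}}Y(a,z_0)e^{-zL_{1}}$ into left multiplication by the internal action $e^{zL_{1}}*a(z_0)=(1-z_0z)^{-2}a(z_0(1-z_0z)^{-1})$ via the module-algebra property and grouplikeness of $e^{zL_{1}}$, exactly as the paper does mode by mode. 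The expansion-region issue you flag as the main obstacle is routine, since all series are expanded in nonnegative powers of $z$ on both sides.
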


\begin{proof} Recall that $V_{\hat{\fg}}(\ell,0)$ is a $\BZ$-graded vertex algebra  and a $\BZ$-graded $\CH$-module.
We claim that $L_{-1}^{(n)}=\D^{(n)}$ and $L_{0}^{(n)}=\binom{-2\deg}{n}$ on $V_{\hat{\fg}}(\ell,0)$
for $n\in \BN$.
For $a\in \fg$, with $a(x)$ viewed as an element of $\hat{\fg}[[x,x^{-1}]]$, we have
\begin{equation*}
e^{zL_{-1}}* a(x)=a(x+z),\   \   \   \
e^{zL_{0}}*a(x)=(1+z)^{-2}a(x(1+z)^{-2}),
\end{equation*}
where  $*$ denotes the $\CH$-module action  on $\hat{\fg}$.
We have
\begin{gather*}
e^{zL_{-1}}a(x)v=(e^{zL_{-1}}* a(x))e^{zL_{-1}}v=a(x+z)e^{zL_{-1}}v, \\
e^{z\D}a(x)v=e^{z\D}Y(a,x)v=Y(a,x+z)e^{z\D}v=a(x+z)e^{z\D}v
\end{gather*}
for $a\in \fg,\  v\in V_{\hat{\fg}}(\ell,0)$. We also have $e^{xL_{-1}}{\bf 1}={\bf 1}=e^{x\D}{\bf 1}$.
It then follows that $e^{zL_{-1}}=e^{z\D}$. Thus $L_{-1}^{(n)}=\D^{(n)}$ for $n\in \BN$. Similarly, we can show
$L_{0}^{(n)}=\binom{-2\deg}{n}$.

The conditions (i) and \eqref{condition-delta1} in Definition \ref{def-good} are clearly satisfied.
For $a\in \fg$, we have
\begin{align*}
    e^{zL_{1}}Y(a,z_0)e^{-zL_{1}}&=\sum_{n\in\BZ}e^{zL_{1}}a(n) e^{-zL_{1}}z_0^{-n-1}\\
    &=\sum_{n\in\BZ} (e^{zL_{1}}*a(n)) z_0^{-n-1}\\
    &=\sum_{n\in\BZ} \sum_{i\in\BN} \binom{-n}{i}a(n+i)z^i z_0^{-n-1}\\
    &=\sum_{m\in\BZ} \sum_{i\in\BN} \binom{-m+i}{i}a(m)z^i z_0^{-m-1+i}\\
    &=\sum_{m\in\BZ} \sum_{i\in\BN} (-1)^{i}\binom{m-1}{i}a(m)z^i z_0^{-m-1+i}\\
    &=(1-zz_0)^{-2}Y(a,z_0/(1-zz_0)).
\end{align*}
On the other hand, we have
\begin{align*}
    Y\bigl(e^{z(1-zz_0)L_{1}}(1-zz_0)^{-2\deg}a,z_0/(1-zz_0)\bigr)
    =(1-zz_0)^{-2}Y(a,z_0/(1-zz_0)),
\end{align*}
noticing that $e^{xL_{1}}a=e^{xL_{1}}(a_{-1}\1)=e^{xL_{1}}(a_{-1})e^{xL_{1}}\1
=\sum_{n\ge 0}x^{n}a_{-1+n}\1 =a$, and $\deg a=1$.
Thus
\begin{equation}
    e^{zL_{1}}Y(a,z_0)e^{-zL_{1}}=Y\bigl(e^{z(1-zz_0)L_{1}}(1-zz_0)^{-2\deg}a,z_0/(1-zz_0)\bigr).
\end{equation}
Since $\fg$ generates $V_{\hat{\fg}}(\ell,0)$ as a vertex algebra,
it follows from Lemma~\ref{th:induction} 
that $V_{\hat{\fg}}(\ell,0)$ is an $\CH$-module vertex algebra.
\end{proof}

Furthermore, we have:

\begin{proposition}\label{affine-Verma-va}
We have $L_{1}^{(n)}V_{\hat\fg}(\ell,0)_{n}=0$ for all $n\ge 1$.
Furthermore, the space of  invariant bilinear forms on $V_{\hat\fg}(\ell,0)$ is one-dimensional and
there exists a symmetric invariant bilinear form on $V_{\hat\fg}(\ell,0)$ with $(\1,\1)=1$.
\end{proposition}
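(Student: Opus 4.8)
The plan is to establish the vanishing $L_{1}^{(n)}V_{\hat\fg}(\ell,0)_{n}=0$ by a direct combinatorial computation, after which the statement on invariant bilinear forms drops out of Corollary~\ref{existence-form}.

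For the vanishing, I would use that $U(\hat\fg)$ is an $\CH$-module algebra, so that the coproduct \eqref{coalgebra2} yields, by coassociativity, the iterated Leibniz rule
\[
L_{1}^{(n)}(u_{1}\cdots u_{k})=\sum_{i_{1}+\cdots+i_{k}=n}(L_{1}^{(i_{1})}u_{1})\cdots(L_{1}^{(i_{k})}u_{k}).
\]
By the PBW theorem, $V_{\hat\fg}(\ell,0)_{n}$ is spanned by the vectors $a_{1}(-m_{1})\cdots a_{k}(-m_{k})\1$ with $a_{j}\in\fg$, $m_{j}\ge 1$ and $m_{1}+\cdots+m_{k}=n$, so it suffices to apply $L_{1}^{(n)}$ to one such monomial. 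Computing in $U(\hat\fg)$ and then reducing modulo the $\CH$-submodule $J_{\ell}$, and using $L_{1}^{(i)}(a\otimes t^{-m})=\binom{m}{i}a(i-m)$ on each factor together with $L_{1}^{(r)}\1=\delta_{r,0}\1$, I obtain a sum over compositions $i_{1}+\cdots+i_{k}=n$ of the terms $\bigl(\prod_{j}\binom{m_{j}}{i_{j}}\bigr)\,a_{1}(i_{1}-m_{1})\cdots a_{k}(i_{k}-m_{k})\1$.

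The crux is the combinatorial collapse: since $\binom{m_{j}}{i_{j}}=0$ whenever $i_{j}>m_{j}$, every surviving term has $i_{j}\le m_{j}$ for all $j$, and this together with $\sum_{j}i_{j}=n=\sum_{j}m_{j}$ forces $i_{j}=m_{j}$ for every $j$. Hence the only possibly nonzero contribution is $a_{1}(0)a_{2}(0)\cdots a_{k}(0)\1$. Since $a_{k}(0)\in\fg\otimes\BF[t]$ annihilates $\1$ (equivalently, lies in $J_{\ell}$), this vector is $0$ in $V_{\hat\fg}(\ell,0)$. This proves $L_{1}^{(n)}V_{\hat\fg}(\ell,0)_{n}=0$ for all $n\ge 1$. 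For the second assertion I would verify the hypotheses of Corollary~\ref{existence-form}: by Proposition~\ref{affine-module-va}, $V_{\hat\fg}(\ell,0)$ is an $\CH$-module vertex algebra; its grading is nonnegative, so $V_{\hat\fg}(\ell,0)_{n}=0$ for $n<0$; its degree-zero subspace is $\BF\1$ (the only monomial of degree $0$ is $\1$ itself); and $L_{1}^{(n)}V_{n}=0$ for $n\ge1$ is exactly what was just shown. Corollary~\ref{existence-form} then gives at once that the space of invariant bilinear forms is one-dimensional and that there is a symmetric invariant bilinear form with $(\1,\1)=1$.

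The only substantive step is the vanishing, and the combinatorial collapse is its heart; I expect no real obstacle beyond bookkeeping, chiefly justifying that the $\CH$-action descends from $U(\hat\fg)$ to the quotient $V_{\hat\fg}(\ell,0)=U(\hat\fg)/J_{\ell}$ (so that $L_{1}^{(n)}$ may legitimately be applied in $U(\hat\fg)$ and then reduced modulo $J_{\ell}$) and pinning down the PBW spanning set of $V_{n}$.
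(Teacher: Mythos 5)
Your proposal is correct and follows essentially the same route as the paper: the paper likewise applies the Leibniz rule coming from the $\CH$-module algebra structure to a PBW spanning monomial $a^{(1)}_{-s_1}\cdots a^{(r)}_{-s_r}\1$ of $V_{\hat\fg}(\ell,0)_n$, uses $L_{1}^{(i)}(a_{-s})=\binom{s}{i}a_{i-s}$ so that vanishing of the binomial coefficients forces $i_j=s_j$ for all $j$, reduces to $a^{(1)}_0\cdots a^{(r)}_0\1=0$, and then invokes Corollary~\ref{existence-form}. The descent of the $\CH$-action to the quotient $U(\hat\fg)/J_\ell$, which you flag as the remaining bookkeeping, is exactly what the paper establishes just before Proposition~\ref{affine-module-va} by noting that $J_\ell$ is an $\CH$-submodule.
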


\begin{proof} In view of Corollary \ref{existence-form}, it suffices to prove that
$L_{1}^{(n)}V_{\hat\fg}(\ell,0)_{n}=0$ for all $n\ge 1$.
Recall that for $a\in \fg,\ n,s\in \BN$, we have $L_{1}^{(n)}(a_{-s})=\binom{s}{n}a_{n-s}$. Thus
\begin{align}\label{eEnas}
 L_{1}^{(n)}(a_{-n})=a_0\   \mbox{ and } \   L_{1}^{(n)}(a_{-s})=0\quad\text{if }n>s.
\end{align}
Let $n\in\BZ_+$. Assume $a^{(1)},\ldots,a^{(r)}\in\fg$, $s_1,\ldots,s_r\in\BZ_+$ with $s_1+\cdots+s_r=n$.
Using (\ref{eEnas}) and the fact that $L_{1}^{(n)}\1=0$ and $a_{0}\1=0$ for all $a\in \fg$, we get
\begin{align*}
    L_{1}^{(n)}a^{(1)}_{-s_1}\cdots a^{(r)}_{-s_r}\1=\sum_{n_1+\cdots+n_r=n}(L_{1}^{(n_1)}a^{(1)}_{-s_1})
    \cdots(L_{1}^{(n_r)}a^{(r)}_{-s_r})\1
    =a^{(1)}_0\cdots a^{(r)}_0 \1=0.
\end{align*}
As
\begin{align*}
    V_{\hat\fg}(\ell,0)_n=\spanf\{a^{(1)}_{-s_1}\cdots a^{(r)}_{-s_r}\1\mid r\ge 1,\ a^{(i)}\in\fg,\  s_i\in\BZ_+,\   s_1+\cdots+s_r=n\},
\end{align*}
we conclude $L_{1}^{(n)}V_{\hat\fg}(\ell,0)_{n}=0$. This completes the proof.
\end{proof}

Note that $V_{\hat\fg}(\ell,0)$ is $\BN$-graded with $V_{\hat\fg}(\ell,0)_0=\BF \1$.
From Lemma \ref{maximal-ideal}, there exists a (unique) maximal graded $V_{\hat\fg}(\ell,0)$-submodule $J$
of $V_{\hat\fg}(\ell,0)$ with $\1\notin J$, which is also the maximal (proper) graded ideal of $V_{\hat\fg}(\ell,0)$.
Since $V_{\hat\fg}(\ell,0)$ as a vertex algebra is generated by $\fg$,
it follows that  $J$ actually is the maximal graded (proper) $\hat{\fg}$-submodule of $V_{\hat\fg}(\ell,0)$.
With Proposition \ref{affine-Verma-va}, by Lemma \ref{maximal-ideal} we have that
$J$ is an $\CH$-submodule of $V_{\hat\fg}(\ell,0)$.
Set
\begin{equation*}
L_{\hat{\fg}}(\ell,0)=V_{\hat\fg}(\ell,0)/J,
\end{equation*}
which  is a simple graded $\hat{\fg}$-module and a simple $\CH$-module vertex algebra.
Combining Proposition \ref{affine-Verma-va} with Corollary \ref{existence-form}, we immediately have:

\begin{corollary}
There exists a non-degenerate symmetric invariant bilinear form $(\cdot,\cdot)$
on $L_{\hat{\fg}}(\ell,0)$, which is uniquely determined by the condition $(\1,\1)=1$.
\end{corollary}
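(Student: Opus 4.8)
The plan is to deduce the statement directly from Corollary~\ref{existence-form}, applied to the simple $\CH$-module vertex algebra $L_{\hat{\fg}}(\ell,0)$. Most of the work has already been done: by the discussion following Lemma~\ref{maximal-ideal}, $L_{\hat{\fg}}(\ell,0)=V_{\hat\fg}(\ell,0)/J$ is a simple $\CH$-module vertex algebra, where $J$ is the maximal proper graded $\CH$-submodule of $V_{\hat\fg}(\ell,0)$. Thus it remains only to check that the three grading hypotheses of Corollary~\ref{existence-form} --- namely that $L_{\hat{\fg}}(\ell,0)_n=0$ for $n<0$, that $L_{\hat{\fg}}(\ell,0)_0=\BF\1$, and that $L_{1}^{(n)}L_{\hat{\fg}}(\ell,0)_n=0$ for $n\ge 1$ --- descend from the corresponding properties of $V_{\hat\fg}(\ell,0)$ established in Propositions~\ref{affine-module-va} and~\ref{affine-Verma-va}.

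First I would record that since $V_{\hat\fg}(\ell,0)$ is $\BN$-graded and $J$ is a graded submodule, the quotient $L_{\hat{\fg}}(\ell,0)$ is again $\BN$-graded, which gives the first hypothesis. For the second, I would recall that $J\cap V_{\hat\fg}(\ell,0)_0=0$ because $J$ is proper while $V_{\hat\fg}(\ell,0)_0=\BF\1$; hence the degree-zero subspace of the quotient is exactly $\BF\1$. For the third, the key observation is that the canonical projection $\pi\colon V_{\hat\fg}(\ell,0)\to L_{\hat{\fg}}(\ell,0)$ is both grading-preserving and an $\CH$-module homomorphism, so that $L_{\hat{\fg}}(\ell,0)_n=\pi(V_{\hat\fg}(\ell,0)_n)$. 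By Proposition~\ref{affine-Verma-va} we have $L_{1}^{(n)}V_{\hat\fg}(\ell,0)_n=0$ for $n\ge 1$, and applying $\pi$ gives $L_{1}^{(n)}L_{\hat{\fg}}(\ell,0)_n=\pi\bigl(L_{1}^{(n)}V_{\hat\fg}(\ell,0)_n\bigr)=0$, as required.

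With all three hypotheses verified and $L_{\hat{\fg}}(\ell,0)$ simple, the final clause of Corollary~\ref{existence-form} yields a non-degenerate symmetric invariant bilinear form on $L_{\hat{\fg}}(\ell,0)$ uniquely determined by $(\1,\1)=1$. I do not expect any genuine obstacle here: the statement is immediate once the transfer of hypotheses through $\pi$ is observed, and all the substantive input --- that $V_{\hat\fg}(\ell,0)$ is an $\CH$-module vertex algebra, that $L_{1}^{(n)}$ annihilates the top graded pieces, and that $J$ is an $\CH$-submodule --- has already been supplied by Propositions~\ref{affine-module-va} and~\ref{affine-Verma-va} together with Lemma~\ref{maximal-ideal}. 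The only point deserving explicit mention is the $\CH$-linearity of $\pi$, which is what makes the vanishing of $L_{1}^{(n)}$ on top weights pass to the simple quotient.
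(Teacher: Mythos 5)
Your proposal is correct and follows essentially the same route as the paper: the paper likewise obtains the result by combining Proposition~\ref{affine-Verma-va} with Corollary~\ref{existence-form}, applied to the simple $\CH$-module vertex algebra $L_{\hat{\fg}}(\ell,0)=V_{\hat\fg}(\ell,0)/J$ furnished by Lemma~\ref{maximal-ideal}. Your explicit verification that the three hypotheses of Corollary~\ref{existence-form} descend through the grading-preserving $\CH$-linear projection $\pi$ is exactly the (unstated) content behind the paper's ``we immediately have.''
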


Next, we study the Virasoro vertex algebras over $\BF$.
As before, let $\BF$ be a field of a characteristic $p$ with $p\ne 2$.
The Virasoro algebra $\Vir$ over $\BF$ by definition is
the Lie algebra with basis
$\{L_n\mid n\in\BZ\}\cup\{\bc\}$, where
\begin{equation}
    [L_m,L_n]=(m-n)L_{m+n}+\frac{1}{2}\binom{m+1}{3}\delta_{m+n,0}\bc
\end{equation}
for $m,n\in \BZ$ and $[\bc, \Vir]=0$.

Let $\Vir_{\BC}$ denote the Virasoro Lie algebra over $\BC$.
Identify $\Sl_2$ with the Lie subalgebra of $\Vir_{\BC}$ with basis $\{ L_{\pm 1}, L_0\}$.
Then $\Vir_{\BC}$ is naturally a $U(\Sl_2)$-module Lie algebra (through the adjoint action of $\Sl_2$ on $\Vir_{\BC}$):
\begin{align}
L_{-1}^{(r)}(\bc)&=\delta_{r,0}\bc,&L_{1}^{(r)}(\bc)&=\delta_{r,0}\bc,&L_{0}^{(r)}(\bc)&=\delta_{r,0}\bc,\\
L_{-1}^{(r)}(L_n)&=(-1)^{r}\binom{n+1}{r}L_{n-r},&L_{1}^{(r)}(L_n)&=\binom{-n+1}{r}L_{n+r}, &L_{0}^{(r)}(L_n)&=\binom{2n}{r}L_{n}
\end{align}
for $r\in\BN,\ n\in\BZ$.
Denote by $\Vir_{\BZ}$ the $\BZ$-span of $\frac{1}{2}{\bf c}$ and $L_{n}$ for $n\in \BZ$ in $\Vir_{\BC}$,
which is a Lie subring.
Note that $U(\Sl_2)_{\BZ}$ preserves $\Vir_{\BZ}$.
Then $\Vir_{\BZ}$ is a $U(\Sl_2)_{\BZ}$-module Lie ring (recall that $U(\Sl_2)_{\BZ}$
is a subring of $U(\Sl_2)$).
We have
\begin{align}
\Vir=\BF\otimes_{\BZ}\Vir_{\BZ}
\end{align}
and recall that $\CH=\BF\otimes_{\BZ}U(\Sl_2)_{\BZ}$.
It follows that $\Vir$ (over $\BF$) is an $\CH$-module Lie algebra.
Then $U(\Vir)$ is an $\CH$-module algebra by \cite[Lemma~3.2]{LM}.

Let $c\in \BF$. Recall the vertex algebra $V_{\rm Vir}(c,0)$, whose underlying vector space is the $\Vir$-module
$U(\Vir)/J_{c}$, where $J_{c}$ is the left ideal of $U(\Vir)$ generated by
$\bc-c$ and $L_{n}$ for $n\ge -1$.
Notice that $\sum_{n\ge-1}\BF L_n+\BF (\bc-c)$ is an $\CH$-submodule of $U(\Vir)$.
It follows that $J_{c}$ is an $\CH$-submodule.
Then $V_{\rm Vir}(c,0)$ is an $\CH$-module.
Note that $L_{1}^{(r)}\1=\delta_{r,0}\1$ for $r\in\BN$.  Just as with affine Lie algebra $\hat{\fg}$,
$\Vir$ is a $\BZ$-graded $\CH$-module so that $V_{\rm Vir}(c,0)$ is a $\BZ$-graded $\CH$-module.
Furthermore, we have:

\begin{proposition}
Vertex algebra $V_{\rm Vir}(c,0)$ equipped with the
$\CH$-module structure becomes an $\CH$-module vertex  algebra.
\end{proposition}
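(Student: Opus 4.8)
The plan is to deduce the result from Lemma~\ref{th:induction}, exactly as Proposition~\ref{affine-module-va} was obtained in the affine case. Since $V_{\rm Vir}(c,0)$ is generated as a vertex algebra by the single conformal vector $\omega=L_{-2}\1$, it suffices to verify three things: that $L_{-1}^{(n)}=\D^{(n)}$ on $V_{\rm Vir}(c,0)$ for $n\in\BN$; that $L_{0}^{(n)}=\binom{-2\deg}{n}$ for $n\in\BN$ (so that $V_{\rm Vir}(c,0)$ is a $\BZ$-graded weight $\CH$-module); and that the conjugation relation \eqref{eq:conj-E} holds for the generator $a=\omega$. The first two conditions place us in the hypotheses of Lemma~\ref{th:induction} with generating subset $T=\{\omega\}$.

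For the first two conditions I would exploit that $U(\Vir)$ is an $\CH$-module algebra and that $V_{\rm Vir}(c,0)=U(\Vir)/J_c$ is a quotient $\CH$-module, so that the action of any group-like element $g$ of $\CH[[z]]$ factors through products via $g(xv)=(g\ast x)(gv)$ for $x\in U(\Vir)$. Writing $L(x):=Y(\omega,x)=\sum_{m\in\BZ}L_m x^{-m-2}$, the element $e^{zL_{-1}}$ is group-like, and a direct check on the Lie algebra gives $e^{zL_{-1}}\ast L(x)=L(x+z)$, whence $e^{zL_{-1}}Y(\omega,x)e^{-zL_{-1}}=Y(\omega,x+z)$; the same equals $e^{z\D}Y(\omega,x)e^{-z\D}$ by the $(V,\B)$-module property \eqref{V,B-module-property}. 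As both operators fix $\1$ and $V_{\rm Vir}(c,0)$ is spanned by the vectors $L_{-n_1}\cdots L_{-n_r}\1$ ($n_i\in\BZ_+$), an induction on the number of modes forces $e^{zL_{-1}}=e^{z\D}$, i.e.\ $L_{-1}^{(n)}=\D^{(n)}$. For $L_0$ I would note that $\sum_{r\ge0}L_0^{(r)}z^r$ is group-like (from $\Delta(L_0^{(n)})=\sum_i L_0^{(n-i)}\otimes L_0^{(i)}$) and acts on $L_m$ as $(1+z)^{2m}L_m$; the same inductive argument then yields $\sum_{r\ge0}L_0^{(r)}z^r=(1+z)^{-2\deg}$ on $V_{\rm Vir}(c,0)$, which is the weight condition by Remark~\ref{-2deg=L0}.

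The heart of the argument is the conjugation relation for $\omega$. Using the operator identity $e^{zL_1}L_m e^{-zL_1}=e^{zL_1}\ast L_m=\sum_{r\ge0}\binom{1-m}{r}L_{m+r}z^r$ (again from group-likeness), I would compute
\begin{align*}
e^{zL_1}Y(\omega,z_0)e^{-zL_1}
&=\sum_{m\in\BZ}\sum_{r\ge0}\binom{1-m}{r}L_{m+r}z^r z_0^{-m-2}
=\sum_{k\in\BZ}L_k z_0^{-k-2}\sum_{r\ge0}\binom{1-k+r}{r}(zz_0)^r\\
&=\sum_{k\in\BZ}L_k z_0^{-k-2}(1-zz_0)^{k-2}
=(1-zz_0)^{-4}\,Y\!\left(\omega,z_0/(1-zz_0)\right),
\end{align*}
where the reindexing is $k=m+r$ and the penultimate equality uses the binomial series $\sum_{r\ge0}\binom{a+r}{r}t^r=(1-t)^{-a-1}$ with $a=1-k$. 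On the other side, since $L_1^{(r)}\omega=\binom{3}{r}L_{r-2}\1=0$ for all $r\ge1$ (as $L_m\1=0$ for $m\ge-1$), we have $e^{z(1-zz_0)L_1}\omega=\omega$, and with $\deg\omega=2$ the right-hand side of \eqref{eq:conj-E} equals $Y\bigl((1-zz_0)^{-4}\omega,z_0/(1-zz_0)\bigr)=(1-zz_0)^{-4}Y\bigl(\omega,z_0/(1-zz_0)\bigr)$, matching the left-hand side. Invoking Lemma~\ref{th:induction} then finishes the proof.

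I expect the main obstacle to be purely computational: keeping track of the weight-$2$ shift in the exponents of $Y(\omega,x)=\sum_m L_m x^{-m-2}$ so that the prefactor emerges correctly as $(1-zz_0)^{-2\deg}=(1-zz_0)^{-4}$ rather than the $(1-zz_0)^{-2}$ of the affine (weight-$1$) case, together with checking the vanishing $L_1^{(r)}\omega=0$ for $r\ge1$ that makes the right-hand side collapse. No central-term difficulties arise, since $\bc$ does not appear among the modes of $Y(\omega,x)$ and the $\CH$-module Lie algebra structure on $\Vir$, together with its cocycle, has already been established.
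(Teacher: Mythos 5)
Your proposal is correct and follows essentially the same route as the paper's own proof: transfer the identities $L_{-1}^{(n)}=\D^{(n)}$ and $L_{0}^{(n)}=\binom{-2\deg}{n}$ exactly as in the affine case, verify the conjugation relation \eqref{condition-conj-E} for the single generator $\omega$ by the same mode computation (your reindexing $k=m+r$ and binomial identity reproduce the paper's $(1-zz_0)^{-4}L(z_0/(1-zz_0))$, and $L_{1}^{(r)}\omega=0$ for $r\ge 1$ collapses the right-hand side), and then invoke Lemma~\ref{th:induction}. No gaps.
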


\begin{proof}
Recall that $V_{\rm Vir}(c,0)$ is an $\BN$-graded vertex algebra. Just as in Proposition \ref{affine-module-va}
with affine vertex algebras, we have  $L_{-1}^{(n)}=\D^{(n)}$ and $L_{0}^{(n)}=\binom{-2\deg}{n}$ on $V_{\Vir}(c,0)$
for $n\in \BN$. The $\CH$-module structure on $V_{\rm Vir}(c,0)$
satisfies \eqref{condition-delta1} of Definition \ref{def-good}.
We then prove \eqref{condition-conj-E}. We have
\begin{align*}
    e^{zL_{1}}Y(\omega,z_0)e^{-zL_{1}}=Y\bigl(e^{z(1-zz_0)L_{1}}(1-zz_0)^{-2\deg}\omega,z_0/(1-zz_0)\bigr),
\end{align*}
because
\begin{align*}
    e^{zL_{1}}Y(\omega,z_0)e^{-zL_{1}}&=\sum_{n\in\BZ}e^{zL_{1}}L_n e^{-zL_{1}}z_0^{-n-2}\\
    &=\sum_{n\in\BZ}e^{zL_{1}}(L_n)z_0^{-n-2}\\
    &=\sum_{n\in\BZ}\sum_{i\in\BN} \binom{-n+1}{i}L_{n+i} z_0^{-n-2} z^i \\
    &=\sum_{m\in\BZ}\sum_{i\in\BN} \binom{-m+i+1}{i}L_m z_0^{-m-2+i} z^i \\
    &=\sum_{m\in\BZ}\sum_{i\in\BN} (-1)^i \binom{m-2}{i}L_m z_0^{-m-2+i} z^i\\
    &=(1-zz_0)^{-4}L(z_0/(1-zz_0))
\end{align*}
and
\begin{align*}
    Y\bigl(e^{z(1-zz_0)L_{1}}(1-zz_0)^{-2\deg}\omega,z_0/(1-zz_0)\bigr)
    &=(1-zz_0)^{-4}Y\bigl(\omega,z_0/(1-zz_0)\bigr)\\
    &=(1-zz_0)^{-4}L(z_0/(1-zz_0)).
\end{align*}
Since $\omega$ generates $V_{\rm Vir}(c,0)$ as a vertex algebra,
by Lemma~\ref{th:induction}, $V_{\rm Vir}(c,0)$ is  an $\CH$-module vertex  algebra.
\end{proof}

Just as with $V_{\hat\fg}(\ell,0)$, we have:

\begin{lemma}\label{virasoro-vanish}
For $n\ge 1$, we have $L_{1}^{(n)}V_{\rm Vir}(c,0)_{n}=0$ .
\end{lemma}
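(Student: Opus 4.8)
The plan is to mirror the proof of Proposition~\ref{affine-Verma-va}, reducing everything to the structural facts that $V_{\rm Vir}(c,0)_0=\BF\1$ with $L_0\1=0$, that $V_{\rm Vir}(c,0)_1=0$, and that the grading is lower truncated. First I would record that for $n\ge 1$ the space $V_{\rm Vir}(c,0)_n$ is spanned by the monomials $L_{-s_1}\cdots L_{-s_r}\1$ with $r\ge 1$, $s_i\ge 2$ and $s_1+\cdots+s_r=n$; equivalently $V_{\rm Vir}(c,0)_n=\sum_{s\ge 2}L_{-s}V_{\rm Vir}(c,0)_{n-s}$. By linearity it then suffices to prove that $L_1^{(n)}\bigl(L_{-s}v'\bigr)=0$ for every $s\ge 2$ and every $v'\in V_{\rm Vir}(c,0)_{n-s}$.

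For this I would peel off the single operator $L_{-s}$ using the conjugation formula for $Y(\omega,z_0)$ established in the preceding proposition. Writing $L_{-s}=\omega_{-s+1}$ and extracting modes from $e^{zL_1}Y(\omega,z_0)e^{-zL_1}=(1-zz_0)^{-4}Y(\omega,z_0/(1-zz_0))$ gives the finite conjugation relation
\begin{equation*}
e^{zL_1}L_{-s}e^{-zL_1}=\sum_{i=0}^{s+1}\binom{s+1}{i}L_{-s+i}z^i
\end{equation*}
on $V_{\rm Vir}(c,0)$. Combining this with $e^{zL_1}v'=\sum_{k\ge 0}z^k L_1^{(k)}v'$ in the identity $e^{zL_1}(L_{-s}v')=(e^{zL_1}L_{-s}e^{-zL_1})(e^{zL_1}v')$ and reading off the coefficient of $z^n$ yields
\begin{equation*}
L_1^{(n)}(L_{-s}v')=\sum_{i+k=n}\binom{s+1}{i}L_{-s+i}\bigl(L_1^{(k)}v'\bigr).
\end{equation*}

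The key point is that this sum has at most two nonzero terms. The binomial coefficient $\binom{s+1}{i}$ vanishes for $i\ge s+2$, while $L_1^{(k)}v'\in V_{\rm Vir}(c,0)_{(n-s)-k}$ vanishes for $k>n-s$ by lower truncation (since $L_1^{(k)}$ lowers degree by $k$); together these force $i\in\{s,s+1\}$. For $i=s$ the term is $(s+1)L_0\bigl(L_1^{(n-s)}v'\bigr)$, and $L_1^{(n-s)}v'\in V_{\rm Vir}(c,0)_0=\BF\1$ is killed by $L_0$ because $L_0\1=0$; for $i=s+1$ the term is $L_1\bigl(L_1^{(n-s-1)}v'\bigr)$, and $L_1^{(n-s-1)}v'\in V_{\rm Vir}(c,0)_1=0$. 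Hence both terms vanish and $L_1^{(n)}(L_{-s}v')=0$, proving the lemma.

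I do not anticipate a serious obstacle; the only thing to be careful about is organizing the computation so that the expansion of $e^{zL_1}L_{-s}e^{-zL_1}$ is genuinely finite and that the degree bookkeeping is exact. The natural alternative --- distributing $L_1^{(n)}$ across the whole monomial via the coproduct, exactly as in the affine case --- is messier here because $\omega$ has weight $2$ rather than $1$, so the constraint on each factor reads $n_i\le s_i+1$ instead of $n_i\le s_i$ and no longer forces $n_i=s_i$ termwise. Peeling off one operator at a time via the conjugation relation sidesteps this difficulty, since the factor $e^{zL_1}v'$ is handled abstractly through the grading rather than by expanding $v'$.
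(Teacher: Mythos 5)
Your proof is correct, and it takes a genuinely different route from the paper's. The paper mirrors the affine case literally: writing a degree-$n$ vector as $L_{-s_1}\cdots L_{-s_r}\1$, it distributes $L_1^{(n)}$ across all $r$ factors via the coproduct and proves, by induction on $r$, that $(L_1^{(n_1)}L_{-s_1})\cdots(L_1^{(n_r)}L_{-s_r})\1=0$ whenever $n_1+\cdots+n_r=s_1+\cdots+s_r$, using $L_1^{(n_i)}(L_{-s_i})=\binom{s_i+1}{n_i}L_{n_i-s_i}$ together with a three-way case analysis: if some $n_i>s_i+1$ the binomial coefficient vanishes; if some $n_i=s_i+1$ the factor becomes $L_1$, which must then be commuted rightward (raising a later superscript by $1$) to produce length-$(r-1)$ expressions covered by the induction hypothesis; and if $n_i\le s_i$ for all $i$, the equal-sum condition forces $n_i=s_i$, giving a multiple of $L_0^r\1=0$. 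This is exactly the ``messier alternative'' you flagged at the end: the weight-$2$ complication ($n_i\le s_i+1$ rather than $n_i\le s_i$) is real, and the paper pays for it with the induction and the $L_1$-commuting step. Your peeling argument avoids both. The conjugation relation you need, $e^{zL_1}L_{-s}e^{-zL_1}=\sum_{i=0}^{s+1}\binom{s+1}{i}L_{-s+i}z^i$, is legitimately available (it occurs, in the form $e^{zL_1}L_ne^{-zL_1}=e^{zL_1}(L_n)=\sum_i\binom{-n+1}{i}L_{n+i}z^i$, inside the paper's proof that $V_{\rm Vir}(c,0)$ is an $\CH$-module vertex algebra); your degree bookkeeping is exact ($\BN$-gradedness kills $L_1^{(k)}v'$ for $k>n-s$, integrality kills $\binom{s+1}{i}$ for $i>s+1$); and the two surviving terms vanish for the structural reasons you give, namely $L_1^{(n-s)}v'\in V_0=\BF\1$ is annihilated by $L_0$ and $L_1^{(n-s-1)}v'\in V_1=0$. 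What the paper's route buys is uniformity with Proposition \ref{affine-Verma-va} at the level of the $\CH$-module algebra $U(\Vir)$; what yours buys is brevity (no induction at all) and a sharper isolation of what is actually used --- that $V_{\rm Vir}(c,0)$ is $\BN$-graded with $V_0=\BF\1$, $V_1=0$, $L_0\1=0$, and $V_n=\sum_{s\ge 2}L_{-s}V_{n-s}$ for $n\ge 1$ --- so your argument would apply verbatim to any $\CH$-module vertex algebra with these properties.
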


\begin{proof} We prove by induction on $r$ that
for any ordered $r$-tuples  $(n_1,\ldots,n_r), (s_1,\ldots,s_r)\in \BZ_+^{r}$  with
$n_1+\cdots+n_r=s_1+\cdots+s_r$, we have
\begin{align}\label{temp0.2}
    (L_{1}^{(n_1)}L_{-s_1})\cdots(L_{1}^{(n_r)}L_{-s_r})\1= 0.
\end{align}
Recall that for $n,s\in\BZ_+$, we have
\begin{align}\label{eq:Vir-BF-1}
    L_{1}^{(n)}(L_{-s})=\binom{s+1}{n}L_{n-s}.
\end{align}
In particular, this implies that  $L_{1}^{(n)}(L_{-s})=0$ if $n> s+1$. For any positive integer $n$, we have
$(L_{1}^{(n)}L_{-n})\1=(n+1)L_{0}\1= 0$. This establishes the base case.

 Now, assume $r\ge 2$. Let $(n_1,\ldots,n_r),\ (s_1,\ldots,s_r)\in\BZ_{+}^{r}$ such that
$n_1+\cdots+n_r=s_1+\cdots+s_r$. Recall that $L_{1}^{(1)}L_{1}^{(k)}=(k+1)L_{1}^{(k+1)}$ for $k\in \BN$.
 If $n_i>s_i+1$ for some $1\le i\le r$, we have $(L_{1}^{(n_i)}L_{-s_i})=0$, so that (\ref{temp0.2}) holds.
 It remains to consider the case where $n_i\le s_i+1$ for all $1\le i\le r$.
 Suppose $n_i=s_i+1$ for some $1\le i\le r$. Then $(L_{1}^{(n_i)}L_{-s_i})=L_1$.
 Noting that  $L_1$ acts as $L_{1}^{(1)}$ on $V_{\rm Vir}(c,0)$, we have
\begin{align*}
    &\phantom{=}\;\;(L_{1}^{(n_1)}L_{-s_1})\cdots(L_{1}^{(n_r)}L_{-s_r})\1\\
    &=(L_{1}^{(n_1)}L_{-s_1})\cdots(L_{1}^{(n_{i-1})}L_{-s_{i-1}})L_1(L_{1}^{(n_{i+1})}L_{-s_{i+1}})
    \cdots(L_{1}^{(n_r)}L_{-s_r})\1,
\end{align*}
where
\begin{align*}
&\phantom{=}\;\;L_{1}(L_{1}^{(n_{i+1})}L_{-s_{i+1}})\cdots(L_{1}^{(n_r)}L_{-s_r})\1 \\
&=\sum_{j=1}^{r-i-1}(L_{1}^{(n_{i+1})}L_{-s_{i+1}})\cdots(L_{1}^{(1)}L_{1}^{(n_{i+j})}L_{-s_{i+j}})\cdots(L_{1}^{(n_r)}L_{-s_r})\1 \\
&=\sum_{j=1}^{r-i-1}(n_{i+j}+1)(L_{1}^{(n_{i+1})}L_{-s_{i+1}})\cdots(L_{1}^{(n_{i+j}+1)}L_{-s_{i+j}})\cdots(L_{1}^{(n_r)}L_{-s_r})\1.
\end{align*}
From induction hypothesis,  we have
\begin{equation*}
(L_{1}^{(n_1)}L_{-s_1})\cdots(L_{1}^{(n_{i-1})}L_{-s_{i-1}})(L_{1}^{(n_{i+1})}L_{-s_{i+1}})\cdots(L_{1}^{(n_{i+j}+1)}L_{-s_{i+j}})\cdots(L_{1}^{(n_r)}L_{-s_r})\1=0
\end{equation*}
for all $1\le j\le r-i-1$.
Finally, we are left with the case where $n_i\le s_i$ for $i=1,\ldots,r$.
 Since $n_1+\cdots+n_r=s_1+\cdots+s_r$, we must have $n_i=s_i$ for $i=1,\ldots,r$.
In this case, we also have
\begin{align*}
 (L_{1}^{(n_1)}L_{-s_1})\cdots(L_{1}^{(n_r)}L_{-s_r})\1
 =   (L_{1}^{(n_1)}L_{-n_1})\cdots(L_{1}^{(n_r)}L_{-n_r})\1=(n_1+1)\cdots (n_r+1) L_{0}^{r}\1=0.
\end{align*}
This completes the induction.
\end{proof}

Now, combining Lemma \ref{virasoro-vanish} with Corollary \ref{existence-form}, we immediately have:

\begin{corollary}
The space of invariant bilinear forms on $V_{\rm Vir}(c,0)$ is one-dimensional  and
there exists a symmetric invariant bilinear form $(\cdot,\cdot)$,
which is uniquely determined by the condition $(\1,\1)=1$.
\end{corollary}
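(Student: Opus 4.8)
The plan is to verify that $V_{\rm Vir}(c,0)$ satisfies the hypotheses of Corollary~\ref{existence-form} and then to apply that corollary directly. The structural hypothesis, namely that $V_{\rm Vir}(c,0)$ is an $\CH$-module vertex algebra, has just been established in the preceding Proposition, so only the grading and vanishing conditions remain to be checked.

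First I would note that $V_{\rm Vir}(c,0)$ is $\BN$-graded, whence $V_{\rm Vir}(c,0)_n=0$ for $n<0$. Next, since $L_n\1=0$ for $n\ge -1$, the space $V_{\rm Vir}(c,0)$ is spanned by the monomials $L_{-s_1}\cdots L_{-s_r}\1$ with $r\ge 0$ and $s_1,\ldots,s_r\in\BZ_+$, a monomial of length $r\ge 1$ having degree $s_1+\cdots+s_r\ge 1$; hence the only homogeneous vectors of degree $0$ are the scalar multiples of the empty monomial $\1$, giving $V_{\rm Vir}(c,0)_0=\BF\1$. Finally, the condition $L_{1}^{(n)}V_{\rm Vir}(c,0)_n=0$ for all $n\ge 1$ is exactly the content of Lemma~\ref{virasoro-vanish}.

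With these three conditions in hand, Corollary~\ref{existence-form} applies and yields at once that the space of invariant bilinear forms on $V_{\rm Vir}(c,0)$ is one-dimensional and that there exists a symmetric invariant form normalized by $(\1,\1)=1$. Uniqueness of the normalized form is then a formal consequence of one-dimensionality: any invariant form is a scalar multiple of this one, and the value $(\1,\1)=1$ pins down the scalar. I do not expect any genuine obstacle at this stage, since the substantive work — the combinatorial vanishing argument of Lemma~\ref{virasoro-vanish} and the verification of the $\CH$-module vertex algebra structure — has already been completed, so the statement is an immediate corollary in the literal sense.
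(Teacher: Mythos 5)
Your proposal is correct and follows exactly the paper's route: the paper derives this corollary by combining Lemma~\ref{virasoro-vanish} with Corollary~\ref{existence-form}, which is precisely what you do. Your explicit verification that $V_{\rm Vir}(c,0)$ is $\BN$-graded with degree-zero part $\BF\1$ merely spells out hypotheses the paper treats as already known, so there is no substantive difference.
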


Let $J$ denote the maximal submodule of the adjoint $V_{\rm Vir}(c,0)$-module.
It follows from Lemmas \ref{maximal-ideal} and \ref{virasoro-vanish} that $J$ is an ideal and an $\CH$-submodule.
Consequently,  the quotient $\CH$-module vertex algebra of $V_{\rm Vir}(c,0)$ modulo $J$, which is
denoted by $L_{\rm Vir}(c,0)$, is a simple $\CH$-module vertex algebra.
Again, by Lemma \ref{virasoro-vanish} and Corollary \ref{existence-form} we immediately have:

\begin{corollary}
On the simple $\CH$-module vertex algebra $L_{\rm Vir}(c,0)$,
there exists a non-degenerate symmetric invariant bilinear form
$(\cdot,\cdot)$, which is uniquely determined by the condition $(\1,\1)=1$.
\end{corollary}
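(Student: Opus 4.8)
The plan is to deduce the statement directly from Corollary~\ref{existence-form}, applied to the simple $\CH$-module vertex algebra $V=L_{\rm Vir}(c,0)$. To invoke that corollary I must verify its four hypotheses: that $V$ is an $\CH$-module vertex algebra, that $V_n=0$ for $n<0$, that $V_0=\BF\1$, and that $L_{1}^{(n)}V_n=0$ for all $n\ge 1$. The non-degeneracy and uniqueness assertions then follow from the ``furthermore'' clause of that corollary together with the simplicity of $V$.

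The structural facts are already in hand from the discussion preceding the statement. There, the maximal graded submodule $J$ of the adjoint module $V_{\rm Vir}(c,0)$ is shown, via Lemmas~\ref{maximal-ideal} and~\ref{virasoro-vanish}, to be simultaneously an ideal and an $\CH$-submodule, so that the quotient $L_{\rm Vir}(c,0)=V_{\rm Vir}(c,0)/J$ is a simple $\CH$-module vertex algebra. For the grading I would recall that $V_{\rm Vir}(c,0)$ is $\BN$-graded with degree-zero component $\BF\1$: a PBW basis is given by the vectors $L_{-n_1}\cdots L_{-n_k}\1$ with each $n_i\ge 2$, whose degree $n_1+\cdots+n_k$ is strictly positive unless $k=0$. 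Since $J$ is graded and $\1\notin J$, we have $J\cap V_{\rm Vir}(c,0)_0=0$, so passing to the quotient preserves $L_{\rm Vir}(c,0)_n=0$ for $n<0$ and yields $L_{\rm Vir}(c,0)_0=\BF\1$.

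The remaining hypothesis, $L_{1}^{(n)}L_{\rm Vir}(c,0)_n=0$ for $n\ge 1$, transfers from Lemma~\ref{virasoro-vanish}, which gives $L_{1}^{(n)}V_{\rm Vir}(c,0)_n=0$; because $J$ is an $\CH$-submodule, the $\CH$-action descends to the quotient and the vanishing is inherited by $L_{\rm Vir}(c,0)$. With all four hypotheses in place, Corollary~\ref{existence-form} shows that the space of invariant bilinear forms on $L_{\rm Vir}(c,0)$ is one-dimensional and, using simplicity, that the symmetric invariant form normalized by $(\1,\1)=1$ is non-degenerate and uniquely determined. I do not anticipate a genuine obstacle here; the one delicate point is confirming that the $\CH$-stability of $J$ is precisely what permits the vanishing $L_{1}^{(n)}V_n=0$ to survive the passage to the simple quotient, and this is exactly what Lemma~\ref{maximal-ideal} was arranged to guarantee.
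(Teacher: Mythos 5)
Your proposal is correct and takes essentially the same route as the paper, which deduces this corollary immediately from Lemma~\ref{virasoro-vanish} together with Corollary~\ref{existence-form}, after the preceding discussion (via Lemma~\ref{maximal-ideal}) has identified $L_{\rm Vir}(c,0)$ as a simple $\CH$-module vertex algebra. Your explicit checks --- the $\BN$-grading with $V_0=\BF\1$ surviving the quotient, and the vanishing $L_{1}^{(n)}V_n=0$ descending because $J$ is a graded $\CH$-submodule --- are exactly the verifications the paper leaves implicit.
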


\end{document}